\long\def\symbolfootnote[#1]#2{\begingroup%
\def\thefootnote{\fnsymbol{footnote}}\footnote[#1]{#2}\endgroup}
\qed\vspace{5pt}}
\newtheoremstyle{lause}
{5pt}
{5pt}
{\slshape}
{\parindent}
{\bfseries}
{.}
{.5em}
{}
\theoremstyle{lause}
\newtheoremstyle{maaritelma}
{5pt}
{5pt}
{\rmfamily}
{\parindent}
{\bfseries}
{.}
{.5em}
{}
\theoremstyle{maaritelma}
\newtheoremstyle{lause}
{5pt}
{5pt}
{\slshape}
{\parindent}
{\bfseries}
{.}
{.5em}
{}
\theoremstyle{lause}
\newtheorem{theorem}{Theorem}[section]
\newtheorem{lemma}[theorem]{Lemma}
\newtheorem{corollary}[theorem]{Corollary}
\newtheorem{problem}[theorem]{Problem}
\newtheoremstyle{maaritelma}
{5pt}
{5pt}
{\rmfamily}
{\parindent}
{\bfseries}
{.}
{.5em}
{}
\theoremstyle{maaritelma}
\newtheorem{definition}[theorem]{Definition}
\newtheorem{example}[theorem]{Example}
\newtheorem{remark}[theorem]{Remark}
\numberwithin{equation}{section}
\begin{document}

\thispagestyle{empty}

\begin{center}

{\large{\textbf{Inner Riesz pseudo-balayage and its applications to minimum energy problems with external fields}}}

\vspace{18pt}

\textbf{Natalia Zorii}

\vspace{18pt}

\emph{Dedicated to Professor Stephen J.\ Gardiner on the occasion of his 65th birthday}\vspace{8pt}

\footnotesize{\address{Institute of Mathematics, Academy of Sciences
of Ukraine, Tereshchenkivska~3, 01601,
Kyiv-4, Ukraine\\
natalia.zorii@gmail.com }}

\end{center}

\vspace{12pt}

{\footnotesize{\textbf{Abstract.} For the Riesz kernel $\kappa_\alpha(x,y):=|x-y|^{\alpha-n}$ of order $0<\alpha<n$ on $\mathbb
R^n$, $n\geqslant2$, we introduce the so-called inner pseu\-do-bal\-ay\-a\-ge $\hat{\omega}^A$ of a (Radon) measure $\omega$ on $\mathbb
R^n$ to a set $A\subset\mathbb R^n$ as the (unique) measure minimizing the Gauss functional
\[\int\kappa_\alpha(x,y)\,d(\mu\otimes\mu)(x,y)-2\int\kappa_\alpha(x,y)\,d(\omega\otimes\mu)(x,y)\] over
the class $\mathcal E^+(A)$ of
all positive measures $\mu$ of finite energy, concentrated on $A$. For quite general signed $\omega$ (not necessarily of finite
energy) and $A$ (not necessarily closed), such $\hat{\omega}^A$ does exist, and it maintains the basic features of inner
balayage for positive measures (defined when $\alpha\leqslant2$), except for those implied by the domination principle. (To
illustrate the latter, we point out that, in contrast to what occurs for the balayage, the inner pseu\-do-bal\-ay\-a\-ge of a positive measure
may increase its total mass.)
The inner pseu\-do-bal\-ay\-a\-ge $\hat{\omega}^A$ is further shown to be a powerful tool in the problem of minimizing the Gauss functional over
all $\mu\in\mathcal E^+(A)$ with $\mu(\mathbb R^n)=1$, which enables us to improve substantially many recent results on this
topic, by strengthening their formulations and/or by extending the areas of their applications. For instance, if $A$ is a
quasiclosed set of nonzero inner capacity $c_*(A)$, and if $\omega$ is a signed measure,
compactly supported in $\mathbb R^n\setminus{\rm Cl}_{\mathbb R^n}A$, then the problem in question is solvable if and only if
either $c_*(A)<\infty$, or $\hat{\omega}^A(\mathbb R^n)\geqslant1$. In particular, if $c_*(A)=\infty$, then the problem has no
solution  whenever $\omega^+(\mathbb R^n)<1/C_{n,\alpha}$, where $C_{n,\alpha}:=1$ if $\alpha\leqslant2$, and
$C_{n,\alpha}:=2^{n-\alpha}$ otherwise; whereas $\omega^-(\mathbb R^n)$, the total amount of the negative charge, has no influence on this phenomenon. The results obtained are illustrated by some examples.
}}
\symbolfootnote[0]{\quad 2010 Mathematics Subject Classification: Primary 31C15.}
\symbolfootnote[0]{\quad Key words: Minimum Riesz energy problems with external fields, inner Riesz balayage, inner Riesz pseu\-do-bal\-ay\-a\-ge.
}

\vspace{6pt}

\markboth{\emph{Natalia Zorii}} {\emph{Inner Riesz pseudo-balayage and its applications to minimum energy problems with external fields}}

\section{Inner pseudo-balayage: a motivation and a model case}\label{sec-intr}

This paper deals with the theory of potentials with respect to the $\alpha$-Riesz kernels
$\kappa_\alpha(x,y):=|x-y|^{\alpha-n}$ of order $0<\alpha<n$ on $\mathbb R^n$, $n\geqslant2$, $|x-y|$ being the Euclidean
distance in $\mathbb R^n$. Our main goal is to proceed further with the study of minimum $\alpha$-Riesz energy problems in the
presence of
external fields $f$, a point of interest for many researchers (see e.g.\ the monographs \cite{BHS,ST} and references therein,
\cite{Gau,O}, \cite{Z5a}--\cite{ZPot3}, as well as \cite{BDO,Dr0,Z-Oh,Z-Rarx}, some of the most recent papers on this topic).

In the current work we improve substantially many recent results in this field, by strengthening their formulations and/or by
extending the areas of their applications (see Section~\ref{sec-s-uns} for the results obtained). This has become possible
due to the development of a new tool, the inner pseu\-do-bal\-ay\-a\-ge (see Section~\ref{sec-pseudo}, cf.\ also the
present section for a motivation of the proposed definition as well as for a model case).

It is well known that the $\alpha$-Riesz balayage (sweeping out) serves as an efficient tool in the problems in question (see
e.g.\  \cite{Dr0,Z9,ZPot3,Z-Rarx}). However, its application is only limited to the case of $\alpha$ ranging over $(0,2]$, and
to external fields $f$ of the form
\begin{equation}\label{f}
f(x):=-U^\omega(x):=-\int\kappa_\alpha(x,y)\,d\omega(y),
\end{equation}
where $\omega$ is a suitable {\it positive} Radon measure.

To extend the area of application of such a tool to {\it arbitrary} $\alpha\in(0,n)$ and/or to external fields $f$ given by
(\ref{f}), but now with {\it signed} $\omega$ involved, we generalize the standard concept of inner balayage of positive
measures (defined for $\alpha\in(0,2]$) to the so-called inner pseu\-do-bal\-ay\-a\-ge of signed measures, by maintaining the basic
features of the former concept~--- except for those implied by the domination principle.

Being crucial to our study of minimum energy problems with external fields, the concept of inner pseu\-do-bal\-ay\-a\-ge is also of
independent interest,
looking promising for further generalizations and other applications. Before introducing it, we first review some basic facts of
the theory of $\alpha$-Riesz potentials.

We denote by $\mathfrak M$ the linear space of all (real-valued Radon) measures $\mu$ on $\mathbb R^n$, equipped with the
{\it vague} topology of pointwise convergence on the class $C_0(\mathbb R^n)$ of all continuous functions $\varphi:\mathbb
R^n\to\mathbb R$ of compact support, and by $\mathfrak M^+$ the cone of all positive $\mu\in\mathfrak M$, where $\mu$ is {\it positive} if and only if $\mu(\varphi)\geqslant0$ for all positive $\varphi\in C_0(\mathbb R^n)$.
Given $\mu,\nu\in\mathfrak M$, the {\it potential} $U^\mu$ and the {\it mutual energy} $I(\mu,\nu)$ are introduced by
\begin{gather*}
  U^\mu(x):=\int\kappa_\alpha(x,y)\,d\mu(y),\quad x\in\mathbb R^n,\\
  I(\mu,\nu):=\int\kappa_\alpha(x,y)\,d(\mu\otimes\nu)(x,y),
\end{gather*}
respectively, provided that the integral on the right is well defined (as a finite number or $\pm\infty$). For $\mu=\nu$,
$I(\mu,\nu)$ defines the {\it energy} $I(\mu):=I(\mu,\mu)$ of $\mu\in\mathfrak M$.

The following property of {\it strict positive definiteness} for the $\alpha$-Riesz kernels, discovered by M.~Riesz
\cite[Chapter~I, Eq.~(13)]{R} (cf.\ also \cite[Theorem~1.15]{L}), is crucial to the current study: $I(\mu)\geqslant0$ for any (signed)
$\mu\in\mathfrak M$, and $I(\mu)=0\iff\mu=0$. This implies that all (signed) $\mu\in\mathfrak M$ with $I(\mu)<\infty$ form a
pre-Hilbert space $\mathcal E$ with the inner product $\langle\mu,\nu\rangle:=I(\mu,\nu)$ and the energy norm
$\|\mu\|:=\sqrt{I(\mu)}$, see e.g.\ \cite[Lemma~3.1.2]{F1}. The topology on $\mathcal E$ defined by means of this norm, is said
to be {\it strong}.

Another fact decisive to this paper is that the cone $\mathcal E^+:=\mathcal E\cap\mathfrak M^+$ is {\it strongly complete}, and
that the strong topology on $\mathcal E^+$ is {\it finer} than the (induced) vague topology on $\mathcal E^+$ (see J.~Deny
\cite{D1}; for $\alpha=2$, cf.\ also H.~Cartan \cite{Ca1}). Thus any strong Cauchy sequence (net) $(\mu_j)\subset\mathcal E^+$
converges both strongly and vaguely to the same unique limit $\mu_0\in\mathcal E^+$, the strong topology on $\mathcal E$ as well
as the vague topology on $\mathfrak M$ being Hausdorff. (Following B.~Fuglede \cite{F1}, such a kernel is said to be {\it
perfect}.)

\subsection{A model case}\label{sec-model}
As a model case for introducing the concept of inner $\alpha$-Riesz pseu\-do-bal\-ay\-a\-ge, consider first a {\it closed} set $F\subset\mathbb R^n$ and a (signed) measure $\omega\in\mathfrak M$ of {\it finite} energy. Since the class $\mathfrak M^+(F)$ of all $\mu\in\mathfrak M^+$ with the support $S(\mu)\subset F$ is vaguely closed \cite[Section~III.2, Proposition~6]{B2}, the
convex cone $\mathcal E^+(F):=\mathfrak M^+(F)\cap\mathcal E$ is strongly closed, and hence strongly complete, the
$\alpha$-Riesz kernel being perfect. By applying \cite{E2} (Theorem~1.12.3 and Proposition~1.12.4(2)), we therefore conclude that for the
given $\omega\in\mathcal E$, there exists the unique $P\omega\in\mathcal E^+(F)$ such that
\begin{equation}\label{op}
\|\omega-P\omega\|=\min_{\mu\in\mathcal E^+(F)}\,\|\omega-\mu\|,
\end{equation}
and the same $P\omega$ is uniquely characterized within $\mathcal E^+(F)$ by the two relations
\begin{gather}
 \langle P\omega-\omega,\mu\rangle\geqslant0\text{ \ for all $\mu\in\mathcal E^+(F)$},\label{intr-1}\\
\langle P\omega-\omega,P\omega\rangle=0.\label{intr-2}
\end{gather}
This $P\omega$ is said to be the {\it orthogonal projection} of $\omega\in\mathcal E$ onto $\mathcal E^+(F)$.

By a slight modification of \cite[Proof of Theorem~3.1]{Z-bal} we infer from the above that $P\omega$ is the only measure in
$\mathcal E^+(F)$ having the two properties
\begin{align}
 U^{P\omega}&\geqslant U^\omega\text{ \ n.e.\ on $F$},\label{intr-1'}\\
U^{P\omega}&=U^\omega\text{ \ $P\omega$-a.e.,}\label{intr-2'}
\end{align}
where the abbreviation {\it n.e.}\ ({\it nearly everywhere}) means that the inequality holds true everywhere on $F$ except for a
subset $N\subset F$ of {\it inner capacity} zero: $c_*(N)=0$.\footnote{For closed $F$, the set $N$ of all $x\in F$ where
(\ref{intr-1'}) fails is Borel, hence capacitable, and so (\ref{intr-1'}) actually holds true even {\it quasi-everywhere} ({\it
q.e.}) on $F$, namely everywhere on $F$ except for $N$ of {\it outer capacity} zero: $c^*(N)=0$. (For the concepts of inner and
outer capacities, see  \cite[Section~II.2.6]{L}.)}

Assume for a moment that $\alpha\leqslant2$, and that the above $\omega$ is {\it positive}, i.e.\ $\omega\in\mathcal E^+$. By
use of the {\it complete maximum principle} \cite[Theorems~1.27, 1.29]{L}, we derive from (\ref{intr-1'}) and (\ref{intr-2'})
that $P\omega$ is then uniquely characterized within $\mathcal E^+(F)$ by the equality
\begin{equation}\label{PB}
U^{P\omega}=U^\omega\text{ \ n.e.\ on $F$},
\end{equation}
see \cite[Theorem~3.1]{Z-bal}, and hence $P\omega$ is actually the {\it balayage} $\omega^F$ of $\omega\in\mathcal E^+$ onto
$F$:
\[P\omega=\omega^F.\]

$\P$ However, if either $\alpha>2$, or if the above $\omega$ is {\it signed}, then relations (\ref{op})--(\ref{intr-2'}) still
hold, but they no longer result in (\ref{PB}).

Motivated by this observation,  we introduce the following definition.

\begin{definition}
 The {\it pseudo-balayage} $\hat{\omega}^F$ of $\omega\in\mathcal E$ onto a closed set $F\subset\mathbb R^n$ with respect to the
 $\alpha$-Riesz kernel of arbitrary order $\alpha\in(0,n)$ is defined as the only measure in $\mathcal E^+(F)$ satisfying
 (\ref{op}) (with $\hat{\omega}^F$ in place of $P\omega$); or equivalently, as the unique measure in $\mathcal E^+(F)$ having
 properties (\ref{intr-1})--(\ref{intr-2'}) (with $\hat{\omega}^F$ in place of $P\omega$).
\end{definition}

\begin{remark}Assume for a moment that $\omega$ is positive. It follows from the above that the pseu\-do-bal\-ay\-a\-ge $\hat{\omega}^F$
coincides with the balayage $\omega^F$ whenever $\alpha\leqslant2$; while otherwise, the former concept presents a natural extension
of the latter, the problem of balayage for $\alpha>2$ being unsolvable.\footnote{See e.g.\ \cite[Section~IV.5.20]{L}; this is caused by the fact that for $\alpha>2$, the maximum principle fails to hold. Therefore,
when speaking of $\alpha$-Riesz balayage, we understand that $\alpha\in(0,2]$.}
But if now $\omega$ is signed, then for $\alpha\leqslant2$, both $\hat{\omega}^F$ and $\omega^F$ still exist and are unique, whereas, in general,
\[\hat{\omega}^F\ne\omega^F,\] the balayage of signed $\omega$ being defined by linearity (for more details see
Remark~\ref{rem3}).\end{remark}

\begin{remark}\label{rem-gen}In Section~\ref{sec-pseudo} below, we shall extend the above definition of the pseu\-do-bal\-ay\-a\-ge
$\hat{\omega}^F$, given in the model case of $\omega\in\mathcal E$ and closed $F\subset\mathbb R^n$, to:
\begin{itemize}
\item $\omega\in\mathfrak M$ {\it that are not necessarily of finite energy}.
  \item $F\subset\mathbb R^n$ {\it that are not necessarily closed}.
  \end{itemize}
For the former goal, we observe that problem (\ref{op}) is equivalent to that of minimizing {\it the Gauss functional\/}
$\|\mu\|^2-2\int U^\omega\,d\mu$, which makes sense {\it not} only for $\omega\in\mathcal E$.
\end{remark}

We complete this section with some general conventions, used in what follows.

From now on, when speaking of a (signed) measure $\mu\in\mathfrak M$, we understand that its potential $U^\mu$
is well defined and finite almost everywhere with respect to the Lebesgue measure on $\mathbb R^n$; or equivalently (cf.\ \cite[Section~I.3.7]{L}) that
\begin{equation}\label{1.3.10}
\int_{|y|>1}\,\frac{d|\mu|(y)}{|y|^{n-\alpha}}<\infty,
\end{equation}
where $|\mu|:=\mu^++\mu^-$, $\mu^+$ and $\mu^-$ being the positive and negative parts of $\mu$ in the Hahn--Jor\-dan
decomposition. Actually, then (and only then) $U^\mu$ is finite q.e.\ on $\mathbb R^n$, cf.\
\cite[Section~III.1.1]{L}. This would necessarily hold if $\mu$ were required to be {\it bounded} (that is, with $|\mu|(\mathbb R^n)<\infty$), or of finite energy, cf.\ \cite[Corollary to Lemma~3.2.3]{F1}.

A measure $\mu\in\mathfrak M^+$ is said to be {\it concentrated} on a set $A\subset\mathbb R^n$ if $A^c:=\mathbb R^n\setminus A$
is $\mu$-neg\-li\-gible, or equivalently if $A$ is $\mu$-measurable and $\mu=\mu|_A$, $\mu|_A$ being the restriction of $\mu$ to
$A$.
Denoting by $\mathfrak M^+(A)$ the cone of all $\mu\in\mathfrak M^+$ concentrated on $A$, we further write $\mathcal
E^+(A):=\mathfrak M^+(A)\cap\mathcal E$, and let $\mathcal E'(A)$ stand for the closure of $\mathcal E^+(A)$ in the strong
topology on $\mathcal E^+$. We emphasize that $\mathcal E'(A)$ {\it is strongly complete}, being a strongly closed subcone of
the strongly complete cone $\mathcal E^+$.

Given $A\subset\mathbb R^n$, denote by $\mathfrak C_A$ the upward directed set of all compact subsets $K$ of $A$, where
$K_1\leqslant K_2$ if and only if $K_1\subset K_2$. If a net $(x_K)_{K\in\mathfrak C_A}\subset Y$ converges to $x_0\in Y$, $Y$
being a topological space, then we shall indicate this fact by writing
\begin{equation*}\label{abr}x_K\to x_0\text{ \ in $Y$ as $K\uparrow A$}.\end{equation*}

\section{On the inner Riesz balayage}

Before proceeding with an extension of the concept of pseudo-balayage announced in Remark~\ref{rem-gen}, we first recall some
basic facts of the theory of inner $\alpha$-Riesz balayage. Such a theory, generalizing Cartan's pioneering work \cite{Ca2} on
the inner Newtonian balayage ($\alpha=2$) to any $\alpha\in(0,2]$, was initiated in the author's recent papers
\cite{Z-bal,Z-bal2}, and it was further developed in \cite{Z-arx1}--\cite{Z-arx}.\footnote{See also \cite{Z-Deny} for an
application of this theory to Deny's principle of positivity of mass.} Throughout this section, $0<\alpha\leqslant2$.

\begin{definition}[{\rm \cite[Sections~3, 4]{Z-bal}}]\label{def-bal} The {\it inner balayage} $\omega^A$ of a measure
$\omega\in\mathfrak M^+$ to a set $A\subset\mathbb R^n$ is defined as the measure of minimum potential in the class
$\Gamma_{A,\omega}$,
\begin{equation*}\label{gamma}
 \Gamma_{A,\omega}:=\bigl\{\mu\in\mathfrak M^+: \ U^\mu\geqslant U^\omega\text{ \ n.e.\ on $A$}\bigr\}.
\end{equation*}
That is, $\omega^A\in\Gamma_{A,\omega}$ and
\begin{equation}\label{e-d}U^{\omega^A}=\min_{\mu\in\Gamma_{A,\omega}}\,U^\mu\text{ \ on $\mathbb R^n$}.\end{equation}
\end{definition}

\begin{theorem}[{\rm \cite[Sections~3, 4]{Z-bal}}]\label{th-bal}Given arbitrary $\omega\in\mathfrak M^+$ and $A\subset\mathbb
R^n$, the inner balayage $\omega^A$, introduced by Definition~{\rm\ref{def-bal}}, exists and is unique. Furthermore,\footnote{As
pointed out in \cite[Remark~3.12]{Z-bal}, (\ref{ineq1}) no longer characterizes $\omega^A$ uniquely (as it does for closed $A$
and $\omega\in\mathcal E^+$). For more details see footnote~\ref{Fo1}, Corollary~\ref{c-oo'}, Remark~\ref{r-oo'}, and
Theorem~\ref{l-oo'}.}
\begin{align}\label{ineq1}U^{\omega^A}&=U^\omega\text{ \ n.e.\ on\ }A,\\
\notag U^{\omega^A}&\leqslant U^\omega\text{ \ on\ }\mathbb R^n.
\end{align}
The same $\omega^A$ can alternatively be characterized by means of either of the following {\rm(}equivalent{\rm)} assertions:
\begin{itemize}
  \item[{\rm(a)}] $\omega^A$ is the unique measure in $\mathfrak M^+$ satisfying the symmetry relation
  \begin{equation*}\label{eq-sym}
    I(\omega^A,\sigma)=I(\sigma^A,\omega)\text{ \ for all $\sigma\in\mathcal E^+$},
  \end{equation*}
  where $\sigma^A$ denotes the only measure in $\mathcal E'(A)$ with $U^{\sigma^A}=U^\sigma$ n.e.\ on $A$.\footnote{For any
  $\sigma\in\mathcal E^+$ and any $A\subset\mathbb R^n$, the measure $\sigma^A\in\mathcal E'(A)$ having the property
  $U^{\sigma^A}=U^\sigma$ n.e.\ on $A$, exists and is unique. It is, in fact, the orthogonal projection of $\sigma$ in the
  pre-Hil\-bert space $\mathcal E$ onto the convex, strongly complete cone $\mathcal E'(A)$; that is (compare with
  (\ref{op})),
  \[\|\sigma-\sigma^A\|=\min_{\mu\in\mathcal E'(A)}\,\|\sigma-\mu\|.\]
  The same $\sigma^A$ is uniquely characterized within $\mathfrak M^+$ by the extremal property (\ref{e-d}) with
  $\mu:=\sigma$.\label{Fo1}}
  \item[{\rm(b)}] $\omega^A$ is the unique measure in $\mathfrak M^+$ satisfying either of the two limit relations
  \begin{gather*}\omega_j^A\to\omega^A\text{ \ vaguely in $\mathfrak M^+$ as $j\to\infty$},\\
U^{\omega_j^A}\uparrow U^{\omega^A}\text{ \ pointwise on $\mathbb R^n$ as $j\to\infty$},
\end{gather*}
where $(\omega_j)\subset\mathcal E^+$ is an arbitrary sequence having the property\/\footnote{Such $\omega_j\in\mathcal E^+$,
$j\in\mathbb N$, do exist; they can be defined, for instance, by means of the formula
\[U^{\omega_j}:=\min\,\bigl\{U^\omega,\,jU^\lambda\bigr\},\]
$\lambda\in\mathcal E^+$ being fixed (see e.g.\ \cite[p.~272]{L} or \cite[p.~257, footnote]{Ca2}). Here we have used the fact
that for any $\mu_1,\mu_2\in\mathfrak M^+$, there is $\mu_0\in\mathfrak M^+$ such that
$U^{\mu_0}:=\min\,\{U^{\mu_1},\,U^{\mu_2}\}$ \cite[Theorem~1.31]{L}.}
\begin{equation*}\label{eq-mon}U^{\omega_j}\uparrow U^\omega\text{ \ pointwise on $\mathbb R^n$ as
$j\to\infty$},\end{equation*}
whereas $\omega_j^A$ denotes the only measure in $\mathcal E'(A)$ with $U^{\omega_j^A}=U^{\omega_j}$ n.e.\ on\/
$A$.
\end{itemize}
\end{theorem}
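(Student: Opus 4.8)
The plan is to prove the finite-energy case first and then reach arbitrary $\omega\in\mathfrak M^+$ by monotone approximation, using the minimum-potential requirement of Definition~\ref{def-bal} as the intrinsic object that makes all the limits well defined. For $\sigma\in\mathcal E^+$ I would run the model argument of Subsection~\ref{sec-model} with the strongly complete convex cone $\mathcal E'(A)$ in place of $\mathcal E^+(F)$: the orthogonal projection $\sigma^A$ of $\sigma$ onto $\mathcal E'(A)$ exists and is unique \cite{E2}, and the projection relations (\ref{intr-1'})--(\ref{intr-2'}) give $U^{\sigma^A}=U^\sigma$ $\sigma^A$-a.e.\ and $U^{\sigma^A}\geqslant U^\sigma$ n.e.\ on $A$. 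Since $\sigma$ is positive and $\alpha\leqslant2$, the complete maximum principle \cite[Theorems~1.27, 1.29]{L} upgrades this to $U^{\sigma^A}\leqslant U^\sigma$ on $\mathbb R^n$ and $U^{\sigma^A}=U^\sigma$ n.e.\ on $A$, and a comparison of admissible measures identifies $\sigma^A$ with the minimum-potential element of $\Gamma_{A,\sigma}$. The symmetry relation of (a) I would first establish for a \emph{compact} $K\subset A$, where $\sigma^K$ and $\tau^K$ are genuinely concentrated on $K$ and the n.e.\ identity $U^{\sigma^K}=U^\sigma$ on $K$ may be inserted under the integral sign, giving $I(\sigma^K,\tau)=I(\sigma,\tau^K)$; passing $K\uparrow A$ along the strong limit $\sigma^K\to\sigma^A$ then yields $I(\sigma^A,\tau)=I(\sigma,\tau^A)$ for all $\sigma,\tau\in\mathcal E^+$.

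\emph{Passage to the limit.} For general $\omega\in\mathfrak M^+$ I would fix $\omega_j\in\mathcal E^+$ with $U^{\omega_j}\uparrow U^\omega$ pointwise (as in the footnote to (b)). Since $U^{\omega_j}\leqslant U^{\omega_{j+1}}$ everywhere, one has $\Gamma_{A,\omega_{j+1}}\subset\Gamma_{A,\omega_j}$ and $\omega_{j+1}^A\in\Gamma_{A,\omega_j}$, so the minimum-potential property forces $U^{\omega_j^A}\uparrow u:=\sup_jU^{\omega_j^A}\leqslant U^\omega$. The pointwise bound
\[\omega_j^A\bigl(B(x,r)\bigr)\leqslant r^{n-\alpha}\,U^{\omega_j^A}(x)\leqslant r^{n-\alpha}\,U^\omega(x)\]
at the (dense) points where $U^\omega<\infty$ makes the local masses uniformly bounded, hence $(\omega_j^A)$ is vaguely relatively compact, and the standard principle for vague limits of measures with increasing potentials identifies the vague limit $\omega^A$ through $U^{\omega^A}=u$. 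Letting $j\to\infty$ in $U^{\omega_j^A}=U^{\omega_j}$ n.e.\ on $A$ (a countable union of inner-null sets is inner-null) gives $U^{\omega^A}=U^\omega$ n.e.\ on $A$, while $U^{\omega^A}=u\leqslant U^\omega$ holds everywhere; moreover any $\nu\in\Gamma_{A,\omega}$ satisfies $U^\nu\geqslant U^\omega\geqslant U^{\omega_j}$ n.e.\ on $A$, so $\nu\in\Gamma_{A,\omega_j}$, $U^\nu\geqslant U^{\omega_j^A}$, and in the limit $U^\nu\geqslant U^{\omega^A}$. This proves existence and the two displayed relations; uniqueness follows at once, since two minimizers share the same potential everywhere and hence coincide by the strict positive definiteness of $\kappa_\alpha$.

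\emph{The two characterizations.} For (a) I would propagate the finite-energy symmetry through the same limit: by monotone convergence $I(\omega^A,\sigma)=\int u\,d\sigma=\lim_jI(\omega_j^A,\sigma)=\lim_jI(\sigma^A,\omega_j)=\lim_j\int U^{\sigma^A}\,d\omega_j=\int U^{\sigma^A}\,d\omega=I(\sigma^A,\omega)$ for every $\sigma\in\mathcal E^+$, where the middle equality is the finite-energy symmetry just proved and the last two use $U^{\omega_j}\uparrow U^\omega$ together with the Fubini identity $\int U^{\sigma^A}\,d\omega_j=\int U^{\omega_j}\,d\sigma^A$. Conversely, a measure $\mu\in\mathfrak M^+$ with $I(\mu,\sigma)=I(\sigma^A,\omega)$ for all $\sigma\in\mathcal E^+$ obeys $\int U^\sigma\,d\mu=\int U^\sigma\,d\omega^A$ on the determining class $\mathcal E^+$, whence $U^\mu=U^{\omega^A}$ q.e.\ and $\mu=\omega^A$. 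Assertion (b) is exactly the content of the limit step: $\omega_j^A\to\omega^A$ vaguely and $U^{\omega_j^A}\uparrow U^{\omega^A}$; each relation determines $\omega^A$ uniquely (the vague limit is unique, and a measure with potential $\sup_jU^{\omega_j^A}$ is unique by positive definiteness), and since the eventual description of $\omega^A$ as the minimum-potential element of $\Gamma_{A,\omega}$ does not mention $(\omega_j)$, the limits are independent of the chosen approximating sequence.

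\emph{Main obstacle.} The delicate step is the limit passage when $\omega\notin\mathcal E$: then the energies $\|\omega_j^A\|$ may be unbounded, so one cannot argue by strong convergence in $\mathcal E$ and must combine vague compactness with a monotone-convergence principle for potentials. The nontrivial half is the identity $U^{\omega^A}=\sup_jU^{\omega_j^A}$, since the principle of descent gives only $U^{\omega^A}\leqslant\liminf_jU^{\omega_j^A}$; securing the reverse inequality, and with it the independence of $\omega^A$ from the approximating sequence, is where the real work lies.
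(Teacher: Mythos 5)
The paper itself contains no proof of Theorem~\ref{th-bal}: it is imported verbatim from \cite[Sections~3, 4]{Z-bal}, whose argument runs exactly along your lines --- orthogonal projection of $\sigma\in\mathcal E^+$ onto the strongly complete cone $\mathcal E'(A)$ upgraded by the complete maximum principle (as in the model case of Section~\ref{sec-intr}), followed by monotone approximation $U^{\omega_j}\uparrow U^{\omega}$ with the classical convergence principle for increasing sequences of potentials supplying the limit measure, and with compact exhaustion yielding the symmetry relation of (a). Your reconstruction is correct and essentially coincides with that route; in a full write-up you would only need to make explicit that the exceptional sets are of the form $A\cap U$ with $U$ Borel, so that the strengthened countable subadditivity of inner capacity (Lemma~\ref{str-sub}) applies (plain countable subadditivity of $c_*$ can fail), and that, since $\sigma^A\in\mathcal E'(A)$ need not be concentrated on $A$ (Remark~\ref{r-oo'}), the ``comparison of admissible measures'' identifying $\sigma^A$ as the minimum-potential element of $\Gamma_{A,\sigma}$ must route the domination principle through the compactly supported projections $\sigma^K$, $K\uparrow A$, which you already introduce for the symmetry step.
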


\begin{remark}
 For signed $\omega\in\mathfrak M$, we define the inner balayage $\omega^A$ by linearity:
 \begin{equation}\label{lin}
  \omega^A:=(\omega^+)^A-(\omega^-)^A.
 \end{equation}
If moreover the mutual energy $I(\omega,\sigma)$ is well defined for all $\sigma\in\mathcal E$, then this $\omega^A$ is uniquely characterized by the symmetry relation
 \[I(\omega^A,\sigma)=I(\sigma^A,\omega)\text{ \ for all $\sigma\in\mathcal E$,}\]
 which actually only needs to be verified for certain countably many $\sigma\in\mathcal E$, independent of the choice of
 $\omega$ (cf.\ \cite{Z-arx}, Theorem~1.4 and Remark~1.4).
\end{remark}

$\bullet$ In the rest of this paper, we shall always require $A\subset\mathbb R^n$ to have the property\footnote{As shown in
\cite[Theorem~3.9]{Z-Rarx}, $(\mathcal P_1)$ is fulfilled, for instance, if $A$ is {\it quasiclosed} ({\it quasicompact}), that
is, if $A$ can be approximated in outer capacity by closed (compact) sets, see Fuglede \cite{F71}.}
\begin{itemize}
\item[$(\mathcal P_1)$] $\mathcal E^+(A)$ {\it is strongly closed}.
\end{itemize}

Then (and only then)
\[\mathcal E'(A)=\mathcal E^+(A),\]
and hence Theorem~\ref{th-bal} remains valid with $\mathcal E'(A)$ replaced throughout by $\mathcal E^+(A)$. In particular, the
following useful corollary holds true.

\begin{corollary}\label{c-oo'}
 For this $A$ and for any $\omega\in\mathcal E^+$, the inner balayage $\omega^A$ is, in fact, the orthogonal projection of
 $\omega$ onto the {\rm(}convex, strongly complete{\rm)} cone $\mathcal E^+(A)$:
 \[\|\omega-\omega^A\|=\min_{\mu\in\mathcal E^+(A)}\,\|\omega-\mu\|.\]
 The same $\omega^A$ is uniquely characterized
 within $\mathcal E^+(A)$ by $U^{\omega^A}=U^\omega$ n.e.\ on $A$.
\end{corollary}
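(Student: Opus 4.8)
The plan is to deduce Corollary~\ref{c-oo'} from Theorem~\ref{th-bal} together with the hypothesis $(\mathcal P_1)$, rather than re-proving anything from scratch. Under $(\mathcal P_1)$ we have $\mathcal E'(A)=\mathcal E^+(A)$, so the footnoted facts attached to part (a) of Theorem~\ref{th-bal} apply verbatim with $\mathcal E'(A)$ replaced by $\mathcal E^+(A)$. In particular, for $\omega\in\mathcal E^+$ the measure $\omega^A$ is characterized within $\mathcal E^+(A)$ by the property $U^{\omega^A}=U^\omega$ n.e.\ on $A$ (this is exactly footnote~\ref{Fo1} specialized to $\sigma:=\omega$), and footnote~\ref{Fo1} already identifies that measure as the orthogonal projection of $\omega$ onto $\mathcal E'(A)=\mathcal E^+(A)$. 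So the two assertions of the corollary are essentially just the statement of footnote~\ref{Fo1} read off under $(\mathcal P_1)$.

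To make this self-contained I would argue as follows. First, observe that $\mathcal E^+(A)$ is a convex cone (clear from the definitions), it is strongly closed by $(\mathcal P_1)$, and hence strongly complete as a strongly closed subcone of the strongly complete cone $\mathcal E^+$; this is the hypothesis needed to invoke the projection theorem. Since $\omega\in\mathcal E$, the results of \cite{E2} (Theorem~1.12.3 and Proposition~1.12.4(2)) quoted in the model case yield a unique $P\omega\in\mathcal E^+(A)$ realizing $\min_{\mu\in\mathcal E^+(A)}\|\omega-\mu\|$, characterized within $\mathcal E^+(A)$ by the variational inequalities (\ref{intr-1}) and (\ref{intr-2}) (with $F$ replaced by $A$ and $P\omega$ the projection). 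The second step is to translate these inner-product relations into potential-theoretic form exactly as in the model case: the slight modification of \cite[Proof of Theorem~3.1]{Z-bal} converts (\ref{intr-1})--(\ref{intr-2}) into (\ref{intr-1'})--(\ref{intr-2'}), i.e.\ $U^{P\omega}\geqslant U^\omega$ n.e.\ on $A$ and $U^{P\omega}=U^\omega$ $P\omega$-a.e. Since $\omega\in\mathcal E^+$ is positive and $\alpha\leqslant2$ throughout this section, the complete maximum principle \cite[Theorems~1.27, 1.29]{L} upgrades these two relations to the single equality $U^{P\omega}=U^\omega$ n.e.\ on $A$, which is precisely the characterization (\ref{PB}) now read over the (possibly non-closed) set $A$.

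The final step is to identify $P\omega$ with the inner balayage $\omega^A$ of Definition~\ref{def-bal}. By Theorem~\ref{th-bal} the inner balayage satisfies $U^{\omega^A}=U^\omega$ n.e.\ on $A$, and $\omega^A\in\mathcal E^+$ whenever $\omega\in\mathcal E^+$ (its energy does not exceed that of $\omega$, being a projection); combined with $\omega^A\in\mathfrak M^+(A)$ this gives $\omega^A\in\mathcal E^+(A)$. Thus both $\omega^A$ and $P\omega$ lie in $\mathcal E^+(A)$ and satisfy $U^{\cdot}=U^\omega$ n.e.\ on $A$. The uniqueness clause recorded in footnote~\ref{Fo1} (valid here because $\mathcal E'(A)=\mathcal E^+(A)$) then forces $P\omega=\omega^A$, and the displayed minimum and the n.e.\ characterization follow.

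I expect the only genuine subtlety to be the step that passes from the variational inequalities to the n.e.\ potential inequalities and then applies the maximum principle, since this is where one must be careful that $A$ need not be closed: the set where (\ref{intr-1'}) could fail is controlled only in inner capacity, so one has to use the n.e.\ (rather than q.e.) formulation and invoke the complete maximum principle in the form valid for $\alpha\leqslant2$. Everything else is bookkeeping: checking convexity and strong completeness of $\mathcal E^+(A)$, and quoting the projection theorem and the uniqueness statement of footnote~\ref{Fo1} under $(\mathcal P_1)$.
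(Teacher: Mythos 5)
Your proposal is correct and follows essentially the same route as the paper: the paper gives no separate proof of Corollary~\ref{c-oo'}, deriving it precisely as in your first paragraph, namely by noting that $(\mathcal P_1)$ forces $\mathcal E'(A)=\mathcal E^+(A)$ and then reading off footnote~\ref{Fo1} to Theorem~\ref{th-bal} (your self-contained middle portion merely reproduces the model-case argument of Section~\ref{sec-model} and \cite[Theorem~3.1]{Z-bal} that underlies that footnote). One small caution about your third paragraph: the assertion $\omega^A\in\mathfrak M^+(A)$ is not an a priori property of the inner balayage --- it can fail without $(\mathcal P_1)$, cf.\ Remark~\ref{r-oo'} --- but is itself part of what footnote~\ref{Fo1} delivers once $\mathcal E'(A)=\mathcal E^+(A)$, so your appeal to that footnote does close the argument, at the cost of making the final identification step redundant with your first paragraph.
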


\begin{remark}\label{r-oo'}
Assumption $(\mathcal P_1)$ is important for the validity of Corollary~\ref{c-oo'}. Indeed, if $\alpha=2$ and $A:=B_r$,
$r\in(0,\infty)$, then for any $\omega\in\mathfrak M^+(B^c_r)$ with $\omega(\overline{B}_r^c)>0$, we have $S(\omega^{B_r})=S_r$ \cite[Theorems~4.1, 5.1]{Z-bal2}, and hence the inner balayage $\omega^{B_r}$ is {\it not} concentrated on the set $B_r$ itself. (Actually, $S(\omega^{B_r})\cap B_r=\varnothing$.)\footnote{Here and in the sequel we use the notations $B_r:=\{|x|<r\}$, $\overline{B}_r:=\{|x|\leqslant r\}$, $S_r:=\{|x|=r\}$.}
\end{remark}

The following generalization of Corollary~\ref{c-oo'} will be useful in the sequel.

\begin{theorem}\label{l-oo'}
Given $\omega\in\mathfrak M^+$, assume that $\omega^A$ is of finite energy.\footnote{This occurs e.g.\ if $\omega$ itself is of
finite energy, or if $\omega$ is bounded and meets the separation condition
\[\inf_{(x,y)\in S(\omega)\times A}\,|x-y|>0.\vspace{-4mm}\]}
Then $\omega^A$ is concentrated on $A$, that is, $\omega^A\in\mathcal E^+(A)$, and it is the unique solution to the problem of
minimizing the Gauss functional
$\|\mu\|^2-2\int U^\omega\,d\mu$, $\mu$ ranging over $\mathcal E^+(A)$. Alternatively, $\omega^A$ is uniquely characterized
within $\mathcal E^+(A)$ by $U^{\omega^A}=U^\omega$ n.e.\ on $A$.
\end{theorem}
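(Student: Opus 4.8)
The plan is to establish the three assertions—concentration on $A$, the Gauss-functional extremality, and the n.e.\ characterization—by leveraging the hypothesis $I(\omega^A)<\infty$ to place $\omega^A$ inside $\mathcal E^+$, and then reducing everything to the projection statement of Corollary~\ref{c-oo'}. First I would observe that Theorem~\ref{th-bal} already guarantees that $\omega^A$ exists, is unique, and satisfies both $U^{\omega^A}=U^\omega$ n.e.\ on $A$ and $U^{\omega^A}\leqslant U^\omega$ on $\mathbb R^n$. Under the standing assumption $(\mathcal P_1)$ we have $\mathcal E'(A)=\mathcal E^+(A)$, so the characterization in Theorem~\ref{th-bal}(a) identifies $\sigma^A$ as the orthogonal projection of $\sigma$ onto $\mathcal E^+(A)$ for every $\sigma\in\mathcal E^+$.

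The crucial structural step is to show $\omega^A\in\mathcal E^+(A)$, i.e.\ that $\omega^A$ is genuinely concentrated on $A$ (not merely on $\mathrm{Cl}_{\mathbb R^n}A$). Here the finite-energy hypothesis does the work: since $I(\omega^A)<\infty$, we have $\omega^A\in\mathcal E^+$, and I would invoke the limit description in Theorem~\ref{th-bal}(b), where $\omega_j^A\to\omega^A$ vaguely with each $\omega_j^A\in\mathcal E'(A)=\mathcal E^+(A)$. Combining the monotone convergence $U^{\omega_j^A}\uparrow U^{\omega^A}$ with the bound $I(\omega^A)<\infty$, a standard argument shows $\|\omega_j^A-\omega^A\|\to0$, so $\omega^A$ lies in the strong closure $\mathcal E'(A)$; but $(\mathcal P_1)$ forces $\mathcal E'(A)=\mathcal E^+(A)$, giving $\omega^A\in\mathcal E^+(A)$ as desired.

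Once $\omega^A\in\mathcal E^+(A)$ is secured, the remaining claims follow by recasting them in the pre-Hilbert space $\mathcal E$. The Gauss functional $\|\mu\|^2-2\int U^\omega\,d\mu$ equals $\|\mu-\omega\|^2-\|\omega\|^2$ when $\omega\in\mathcal E$; but for merely admissible $\omega\in\mathfrak M^+$ this identity is unavailable, so instead I would characterize the minimizer of $\mu\mapsto\|\mu\|^2-2\int U^\omega\,d\mu$ over the convex cone $\mathcal E^+(A)$ directly via its first-order (variational) conditions, namely $\int U^\omega\,d\mu\leqslant\langle\omega^A,\mu\rangle$ for all $\mu\in\mathcal E^+(A)$ together with $\int U^\omega\,d\omega^A=\|\omega^A\|^2$. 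These two conditions are exactly equivalent to $U^{\omega^A}\geqslant U^\omega$ n.e.\ on $A$ and $U^{\omega^A}=U^\omega$ $\omega^A$-a.e., which in turn are precisely the relations furnished by Theorem~\ref{th-bal} (the first from $U^{\omega^A}=U^\omega$ n.e.\ on $A$, the second from integrating that n.e.\ equality against $\omega^A$, whose inner energy charges no set of inner capacity zero). The uniqueness of the Gauss minimizer then follows from strict convexity of the functional, which rests on the strict positive definiteness of the kernel.

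The main obstacle I anticipate is the concentration step, specifically converting the vague/pointwise-monotone convergence of $\omega_j^A$ into strong convergence so as to conclude $\omega^A\in\mathcal E'(A)$. The delicate point is that $\omega$ itself need not have finite energy, so one cannot simply project $\omega$; one must work through the approximants $\omega_j$ and pass to the limit, using perfectness of the kernel (strong completeness of $\mathcal E^+$ and the fact that strong convergence implies vague convergence to the same limit) to guarantee the strong limit coincides with the vague limit $\omega^A$. The finite-energy hypothesis on $\omega^A$ is exactly what makes this passage legitimate, and keeping track of which convergences are strong versus merely vague is where care is required.
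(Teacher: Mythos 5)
Your proposal is correct, but it reaches the conclusion by a genuinely different route than the paper. The paper's proof is essentially two lines: it quotes the idempotency $(\omega^A)^A=\omega^A$ from \cite[Corollary~4.2]{Z-bal} and applies Corollary~\ref{c-oo'} to the finite-energy measure $\omega^A$ itself, which at once gives $\omega^A\in\mathcal E^+(A)$, identifies $\omega^A$ as the orthogonal projection of itself, and (after replacing $U^{\omega^A}$ by $U^\omega$ in the linear term, legitimate because measures of $\mathcal E^+(A)$ charge no subset of $A$ of inner capacity zero) yields the Gauss-minimality; for the n.e.\ characterization it merges the two n.e.\ relations via the strengthened subadditivity of Lemma~\ref{str-sub} and invokes Corollary~\ref{c-oo'} once more. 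You instead re-derive the concentration $\omega^A\in\mathcal E^+(A)$ from scratch out of Theorem~\ref{th-bal}(b), and your ``standard argument'' does close: for $j\leqslant k$ the monotonicity $U^{\omega_j^A}\leqslant U^{\omega_k^A}$ gives $\langle\omega_j^A,\omega_k^A\rangle\geqslant\|\omega_j^A\|^2$, hence
\[
\|\omega_k^A-\omega_j^A\|^2\leqslant\|\omega_k^A\|^2-\|\omega_j^A\|^2,
\]
while Fubini--Tonelli and $U^{\omega_j^A}\leqslant U^{\omega^A}$ bound $\|\omega_j^A\|^2$ by $I(\omega^A)<\infty$; so $(\omega_j^A)\subset\mathcal E^+(A)$ is strong Cauchy, and perfectness together with the Hausdorffness of the vague topology identifies the strong limit with the vague limit $\omega^A$, placing it in $\mathcal E'(A)=\mathcal E^+(A)$ by $(\mathcal P_1)$. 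Your treatment of the final uniqueness claim also differs, and buys something: by checking that any $\mu_0\in\mathcal E^+(A)$ with $U^{\mu_0}=U^\omega$ n.e.\ on $A$ satisfies the integrated variational conditions one test measure at a time, and is therefore itself a minimizer of the strictly convex Gauss functional, you never need to combine two exceptional sets, and so you dispense with Lemma~\ref{str-sub}, which the paper does need at this point. What the paper's route buys is brevity and a structural fact (idempotency of inner balayage, imported from \cite{Z-bal}); what yours buys is self-containedness, resting only on statements present in the paper (Theorem~\ref{th-bal}(b), perfectness, $(\mathcal P_1)$). One small caveat: the full equivalence you assert between the integrated and pointwise conditions is more than your argument requires or establishes~--- only the easy direction (pointwise implies integrated) enters, and that is the direction your sketch actually supports.
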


\begin{proof} Since $\omega^A=(\omega^A)^A$ (see \cite[Corollary~4.2]{Z-bal}), Corollary~\ref{c-oo'} applied to
$\omega^A\in\mathcal E^+$ shows that, indeed, $\omega^A\in\mathcal E^+(A)$, and hence $\omega^A$ is the orthogonal projection of
itself onto $\mathcal E^+(A)$; or equivalently, it is the (unique) solution to the problem of minimizing the functional
$\|\mu\|^2-2\int U^{\omega^A}\,d\mu$, $\mu$ ranging over $\mathcal E^+(A)$. This implies the former part of the claim by noting that $\int U^{\omega^A}\,d\mu=\int U^\omega\,d\mu$ for all $\mu\in\mathcal E^+(A)$, which, in turn, is derived from (\ref{ineq1}) by use of the fact, to be often used in what follows, that any $\mu$-mea\-s\-ur\-ab\-le subset of $A$ with
$c_*(\cdot)=0$ is $\mu$-negligible for any $\mu\in\mathcal E^+(A)$.

For the latter part, assume (\ref{ineq1}) holds for some $\mu_0\in\mathcal E^+(A)$ in place of $\omega^A$. By the strengthened
version of countable subadditivity for inner capacity (Lemma~\ref{str-sub}),
\[U^{\mu_0}=U^\omega=U^{\omega^A}\text{ n.e.\ on $A$},\]
whence $\mu_0=(\omega^A)^A$, again by Corollary~\ref{c-oo'} applied to $\omega^A\in\mathcal E^+$. Combining this with   $(\omega^A)^A=\omega^A$ (see above) gives $\mu_0=\omega^A$, thereby completing
the whole proof.\end{proof}

\begin{lemma}\label{str-sub}
 For arbitrary $Q\subset\mathbb R^n$ and Borel $U_j\subset\mathbb R^n$,
\[c_*\Bigl(\bigcup_{j\in\mathbb N}\,Q\cap U_j\Bigr)\leqslant\sum_{j\in\mathbb N}\,c_*(Q\cap U_j).\]
\end{lemma}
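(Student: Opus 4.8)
The plan is to reduce the asserted inequality to the standard behaviour of capacity on Borel sets, by exhausting the union from inside by compact sets and then slicing each such compact set along a Borel decomposition subordinate to the $U_j$. Write $E:=\bigcup_{j\in\mathbb N}(Q\cap U_j)=Q\cap\bigcup_{j\in\mathbb N}U_j$. By the very definition of inner capacity, $c_*(E)=\sup c_*(K)$, the supremum being taken over all compact $K\subset E$; hence it is enough to bound $c_*(K)$ for an arbitrary such $K$ by the right-hand sum, and then pass to the supremum.

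Fix a compact $K\subset E$. The decisive step is to split $K$ into pairwise disjoint Borel pieces subordinate to the sets $U_j$: put
\[K_j:=\Bigl(K\cap U_j\Bigr)\setminus\bigcup_{i<j}U_i,\quad j\in\mathbb N.\]
Since $K$ is compact (hence Borel) and each $U_j$ is Borel, every $K_j$ is Borel; moreover the $K_j$ are pairwise disjoint and $K=\bigcup_{j\in\mathbb N}K_j$, because every point of $K\subset\bigcup_j U_j$ lies in some $U_j$ and $K_j$ selects the least such index. Finally, $K\subset E\subset Q$ together with $K_j\subset U_j$ gives $K_j\subset Q\cap U_j$ for every $j$.

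I would now invoke two standard facts of potential theory (see \cite[Section~II.2]{L}): outer capacity $c^*$ is countably subadditive, and every Borel set is capacitable, so that $c_*=c^*$ on Borel sets. Applying these to the compact set $K$ and to its Borel slices $K_j$, and combining them with the monotonicity of inner capacity, we obtain
\[c_*(K)=c^*(K)\leqslant\sum_{j\in\mathbb N}c^*(K_j)=\sum_{j\in\mathbb N}c_*(K_j)\leqslant\sum_{j\in\mathbb N}c_*(Q\cap U_j).\]
Taking the supremum over all compact $K\subset E$ then yields the claim.

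The crux of the argument — and the reason the Borel hypothesis on the $U_j$ cannot be dispensed with — is that the individual sets $Q\cap U_j$ need not be capacitable, $Q$ being entirely arbitrary; consequently ordinary countable subadditivity of inner capacity (which may fail for non-capacitable sets) is not directly available for them. The decomposition circumvents this obstacle by performing the subadditivity estimate only on the compact set $K$ and its Borel slices $K_j$, for which the identity $c_*=c^*$ is guaranteed, and by transferring the bound back to $Q\cap U_j$ through the monotonicity inequality $c_*(K_j)\leqslant c_*(Q\cap U_j)$. The Borel assumption is used precisely to ensure that each slice $K_j$ is Borel, hence capacitable.
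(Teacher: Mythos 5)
Your proof is correct, but it cannot be said to follow the paper's proof, because the paper gives none: for this lemma it simply refers to Fuglede \cite[pp.~157--158]{F1} (for $\alpha=2$, Cartan \cite[p.~253]{Ca2}; compare \cite[p.~144]{L}). Your write-up is thus a self-contained reconstruction, and a sound one. The chain is: inner regularity reduces the claim to an arbitrary compact $K\subset\bigcup_j(Q\cap U_j)$; the minimal-index partition $K_j:=(K\cap U_j)\setminus\bigcup_{i<j}U_i$ yields disjoint Borel slices with $K_j\subset Q\cap U_j$; and then $c_*(K)=c^*(K)\leqslant\sum_j c^*(K_j)=\sum_j c_*(K_j)\leqslant\sum_j c_*(Q\cap U_j)$. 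Every ingredient is available for the Riesz kernels: compact and Borel sets are capacitable (Choquet's theorem; \cite[Section~II.2]{L}), and although you invoke countable subadditivity of $c^*$ in general, your chain only ever applies it to Borel sets ($K$ and the $K_j$), which is precisely the fact the paper itself uses elsewhere with the citation \cite[p.~144]{L} — so no stronger input is actually consumed. One comparative remark: in the classical treatment of the version with \emph{open} $U_j$, capacitability of the slices can be avoided entirely, since by compactness finitely many $U_j$ cover $K$ and one can shrink this open cover to a compact cover $K=\bigcup K_j'$ with $K_j'\subset K\cap U_j$, after which finite subadditivity on compacts suffices; for \emph{Borel} $U_j$, as the lemma states, that shrinking is unavailable, and your appeal to Borel capacitability is the natural substitute. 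Your closing diagnosis is also exactly right: the point of the "strengthened" formulation is that the sets $Q\cap U_j$ need not be capacitable ($Q$ being arbitrary), so plain countable subadditivity of inner capacity cannot be applied to them directly, and the estimate must be performed on capacitable surrogates and transferred back by monotonicity.
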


\begin{proof}
See \cite[pp.~157--158]{F1} (for $\alpha=2$, cf.\ \cite[p.~253]{Ca2}); compare with \cite[p.~144]{L}.
\end{proof}

\section{An extension of the concept of pseudo-balayage}\label{sec-pseudo}

In what follows, we assume that
\begin{equation}\label{cap}
 c_*(A)>0.
\end{equation}
Then (and only then) the class $\mathcal E^+(A)$ is not reduced to $\{0\}$, see \cite[Lemma~2.3.1]{F1}, and the problems in question become nontrivial.

Fix a (signed) measure $\omega\in\mathfrak M$ (not necessarily of finite energy), and define the external field $f:\mathbb
R^n\to[-\infty,\infty]$ by means of the formula
\begin{equation*}
f:=-U^\omega.
\end{equation*}
Being the difference between two lower semicontinuous (l.s.c.)\ functions, $f$ is Borel measurable, and, due to (\ref{1.3.10}),
$f$ is finite q.e.\ on $\mathbb R^n$.

Let $\mathcal E^+_f(A)$ stand for the convex cone of all $\mu\in\mathcal E^+(A)$ such that $f$ is $\mu$-in\-t\-eg\-r\-able; then for
every $\mu\in\mathcal E^+_f(A)$, the Gauss functional\footnote{In constructive function theory, the Gauss functional is also
referred to as {\it the $f$-weighted energy}.}
\begin{equation*}\label{G-f}
I_f(\mu):=\|\mu\|^2+2\int f\,d\mu=\|\mu\|^2-2\int U^\omega\,d\mu
\end{equation*}
is finite. Denoting
\begin{equation}\label{W}
 \hat{w}_f(A):=\inf_{\mu\in\mathcal E^+_f(A)}\,I_f(\mu),
\end{equation}
we have
\begin{equation}\label{West}
 -\infty\leqslant\hat{w}_f(A)\leqslant0,
\end{equation}
the upper estimate being caused by the fact that $0\in\mathcal E^+_f(A)$ while $I_f(0)=0$.

\begin{definition}\label{deff1}A measure $\hat{\omega}^A\in\mathcal E^+_f(A)$ with $I_f(\hat{\omega}^A)=\hat{w}_f(A)$ is said to be {\it the inner $\alpha$-Riesz pseu\-do-bal\-ay\-a\-ge} of $\omega$ onto $A$.\end{definition}

\begin{lemma}\label{l:unique}
 The inner pseudo-balayage $\hat{\omega}^A$ is unique {\rm(}if it exists{\rm)}.
\end{lemma}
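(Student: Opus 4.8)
The plan is to exploit the strict convexity of the Gauss functional $I_f$ on the convex cone $\mathcal E^+_f(A)$, a convexity that stems directly from the strict positive definiteness of the $\alpha$-Riesz kernel recorded in Section~\ref{sec-intr}. Since we argue under the standing hypothesis that a minimizer exists, and since $I_f$ is finite throughout $\mathcal E^+_f(A)$, the infimum $\hat{w}_f(A)$ from (\ref{W}) is necessarily a finite real number; this finiteness is what legitimizes the convexity estimate below.

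Suppose, then, that $\lambda_1,\lambda_2\in\mathcal E^+_f(A)$ both attain the infimum, that is, $I_f(\lambda_1)=I_f(\lambda_2)=\hat{w}_f(A)$. First I would observe that the midpoint $\mu:=\tfrac12(\lambda_1+\lambda_2)$ again belongs to $\mathcal E^+_f(A)$: it lies in $\mathcal E^+$ (a convex cone), it is concentrated on $A$, and $f$ is $\mu$-integrable, being integrable with respect to each of $\lambda_1,\lambda_2$ and satisfying $\int f\,d\mu=\tfrac12\bigl(\int f\,d\lambda_1+\int f\,d\lambda_2\bigr)$. Hence $\mu$ competes in (\ref{W}), so that $I_f(\mu)\geqslant\hat{w}_f(A)$.

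The key computation is to expand $I_f(\mu)$ via the bilinearity of the inner product together with the parallelogram identity $\|\lambda_1+\lambda_2\|^2=2\|\lambda_1\|^2+2\|\lambda_2\|^2-\|\lambda_1-\lambda_2\|^2$, recalling that the mutual energy $\langle\lambda_1,\lambda_2\rangle$ is finite by Cauchy--Schwarz and that each $\int U^\omega\,d\lambda_i$ is finite by the very definition of $\mathcal E^+_f(A)$. This yields the exact identity
\[
I_f(\mu)=\tfrac12 I_f(\lambda_1)+\tfrac12 I_f(\lambda_2)-\tfrac14\|\lambda_1-\lambda_2\|^2=\hat{w}_f(A)-\tfrac14\|\lambda_1-\lambda_2\|^2.
\]
Combining this with $I_f(\mu)\geqslant\hat{w}_f(A)$ forces $\|\lambda_1-\lambda_2\|^2\leqslant0$, whence $\|\lambda_1-\lambda_2\|=0$. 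By the strict positive definiteness of the kernel (the property $I(\nu)=0\iff\nu=0$), we conclude $\lambda_1=\lambda_2$.

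I do not expect a genuine obstacle here; this is the classical device that a strictly convex functional admits at most one minimizer, realized in the pre-Hilbert space $\mathcal E$. The only points demanding a little care are bookkeeping ones: confirming that $\hat{w}_f(A)$ is finite so that subtracting it is legitimate (guaranteed by the assumption that a minimizer exists, together with the finiteness of $I_f$ on $\mathcal E^+_f(A)$), and verifying that the midpoint $\mu$ indeed remains in the competing class $\mathcal E^+_f(A)$, so that the extremality inequality $I_f(\mu)\geqslant\hat{w}_f(A)$ may be invoked.
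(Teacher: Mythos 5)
Your proof is correct and takes essentially the same route as the paper: the paper's own proof of Lemma~\ref{l:unique} is exactly this standard device, citing the convexity of $\mathcal E^+_f(A)$, the parallelogram identity in the pre-Hilbert space $\mathcal E$, and the strict positive definiteness of the Riesz kernel (with reference to \cite[Proof of Lemma~6]{Z5a}), which you have merely written out in full. Your two bookkeeping points~--- that the midpoint stays in $\mathcal E^+_f(A)$, and that $\hat{w}_f(A)$ is finite precisely because a minimizer is assumed to exist~--- correspond exactly to the paper's parenthetical remark on the finiteness of $\hat{w}_f(A)$.
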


\begin{proof}
This follows by standard methods based on the convexity of the class $\mathcal E^+_f(A)$ and the parallelogram identity in the
pre-Hil\-bert space $\mathcal E$, by use of the strict positive definiteness of the Riesz kernel. (See e.g.\ \cite[Proof of
Lemma~6]{Z5a}. Note that this proof requires $\hat{w}_f(A)$ to be finite, which however necessarily holds whenever
$\hat{\omega}^A$ exists.)
\end{proof}

\begin{remark}\label{rem3}If $\omega$ is positive, then, as seen from Theorem~\ref{l-oo'}, the concept of inner pseu\-do-bal\-ay\-a\-ge $\hat{\omega}^A$ extends that of inner balayage $\omega^A$ (introduced for $\alpha\leqslant2$) to arbitrary $\alpha\in(0,n)$.
See the present section as well as Section~\ref{nice} for details.

$\P$ But if $\omega$ is {\it signed}, then the inner balayage $\omega^A$, defined for $\alpha\leqslant2$ by means of (\ref{lin}), may {\it not} coincide with the inner pseu\-do-bal\-ay\-a\-ge $\hat{\omega}^A$. (Thus, in case $\alpha\leqslant2$, the theory of inner pseu\-do-bal\-ay\-a\-ge may be thought of as an alternative theory of inner balayage for signed measures, which is {\it not} equivalent to the standard one.)

Indeed, take $\omega\in\mathfrak M$ such that $\omega=-\omega^-\ne0$. Then $\omega^A=-(\omega^-)^A\ne0$, whereas
$\hat{\omega}^A$, minimizing $\|\mu\|^2+2\int U^{\omega^-}\,d\mu\geqslant0$ over $\mathcal E^+_f(A)$, is obviously $0$, and so $\hat{\omega}^A\ne\omega^A$.
 \end{remark}

\begin{remark}\label{q}It follows easily from Definition~\ref{deff1} that, if $\hat{\omega}^A$ exists, then so does
$(\widehat{c\omega})^A$ for any $c\in[0,\infty)$, and moreover
\begin{equation}\label{equq}
(\widehat{c\omega})^A=c\hat{\omega}^A.
\end{equation}
However, this fails to hold if $c<0$, cf.\ Remark~\ref{rem3}.
\end{remark}

\subsection{On the  existence of the inner pseudo-balayage}\label{sec-cond}
Recall that we are working under the permanent requirements $(\mathcal P_1)$ and (\ref{cap}).
In the rest of this paper, we also assume that $\omega\in\mathfrak M$ satisfies either of
the following properties:\footnote{$(\mathcal P_3)$ is certainly fulfilled if $\omega\in\mathfrak M$ is compactly supported in
$\overline{A}^c$.}
\begin{itemize}
\item[$(\mathcal P_2)$] $\omega\in\mathcal E$.
\item[$(\mathcal P_3)$] {\it $\omega^+$ is bounded, $U^\omega$ is upper semicontinuous on $\overline{A}:={\rm Cl}_{\mathbb
    R^n}A$, and}
     \begin{equation}\label{MA}
      M_A:=\sup_{x\in A}\,U^{|\omega|}(x)<\infty.
     \end{equation}
\end{itemize}

Note that in case $(\mathcal P_2)$, the class $\mathcal E^+_f(A)$ actually coincides with the whole of $\mathcal E^+(A)$, cf.\ (\ref{reprr}),
while in case $(\mathcal P_3)$, it necessarily contains all {\it bounded} $\mu\in\mathcal E^+(A)$.

\begin{theorem}\label{th-ps1} The inner pseudo-balayage $\hat{\omega}^A$, introduced by Definition\/~{\rm\ref{deff1}}, does
exist. Hence,
\begin{equation}\label{p2}
-\infty<\hat{w}_f(A)\leqslant0.
\end{equation}
The same $\hat{\omega}^A$ can alternatively be characterized by either of the following {\rm(a)} or {\rm(b)}:
\begin{itemize}
\item[{\rm(a)}] $\hat{\omega}^A$ is the only measure in $\mathcal E^+_f(A)$ having the two properties
 \begin{gather}\label{def1'}
\int U^{\hat{\omega}^A-\omega}\,d\mu\geqslant0\text{ \ for all $\mu\in\mathcal E^+_f(A)$},\\
\int U^{\hat{\omega}^A-\omega}\,d\hat{\omega}^A=0.\label{def2'}
\end{gather}
\item[{\rm(b)}] $\hat{\omega}^A$ is the only measure in $\mathcal E^+(A)$ having the two properties\/\footnote{Due to
    (\ref{def1}), $U^{\hat{\omega}^A}\geqslant U^\omega$ holds true $\hat{\omega}^A$-a.e.; hence, (\ref{def2}) is equivalent to the
    apparently weaker relation $U^{\hat{\omega}^A}\leqslant U^\omega$ $\hat{\omega}^A$-a.e. Similarly, (\ref{def2'}) can be replaced by $\int U^{\hat{\omega}^A-\omega}\,d\hat{\omega}^A\leqslant0$.}
  \begin{align}\label{def1}
  U^{\hat{\omega}^A}&\geqslant U^\omega\text{ \ n.e.\ on $A$},\\
  U^{\hat{\omega}^A}&=U^\omega\text{ \ $\hat{\omega}^A$-a.e.}\label{def2}
  \end{align}
\end{itemize}
\end{theorem}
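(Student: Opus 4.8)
The plan is to establish existence by the direct method of the calculus of variations and then to read off the characterizations (a) and (b) from the first--order (variational) conditions that a minimizer must obey. The engine of the existence argument is the strengthened convexity identity for the Gauss functional: for any $\mu,\nu\in\mathcal E^+_f(A)$ the parallelogram law in the pre-Hilbert space $\mathcal E$ gives
\[I_f\bigl(\tfrac{\mu+\nu}2\bigr)=\tfrac12\bigl(I_f(\mu)+I_f(\nu)\bigr)-\tfrac14\|\mu-\nu\|^2.\]
Since $\tfrac{\mu+\nu}2\in\mathcal E^+_f(A)$ by convexity, this yields $\tfrac14\|\mu-\nu\|^2\leqslant\tfrac12(I_f(\mu)+I_f(\nu))-\hat w_f(A)$, so that \emph{once $\hat w_f(A)$ is known to be finite}, every minimizing sequence $(\mu_k)\subset\mathcal E^+_f(A)$ is strongly Cauchy. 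As the $\alpha$-Riesz kernel is perfect and $\mathcal E^+(A)$ is strongly closed by $(\mathcal P_1)$, such a sequence converges both strongly and vaguely to a common limit $\hat\omega^A\in\mathcal E^+(A)$; it then remains to verify $\hat\omega^A\in\mathcal E^+_f(A)$ and $I_f(\hat\omega^A)=\hat w_f(A)$, whence $\hat\omega^A$ is the minimizer, unique by Lemma~\ref{l:unique}, and (\ref{p2}) holds.

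The analytic heart of the existence proof, and the step I expect to be hardest, is to secure $\hat w_f(A)>-\infty$ together with the lower semicontinuity needed to identify the limit; here the two hypotheses on $\omega$ must be treated separately. Under $(\mathcal P_2)$ everything is immediate: $\mathcal E^+_f(A)=\mathcal E^+(A)$, and $I_f(\mu)=\|\mu-\omega\|^2-\|\omega\|^2\geqslant-\|\omega\|^2$, so the problem is exactly that of orthogonally projecting $\omega$ onto the convex, strongly complete cone $\mathcal E^+(A)$, and existence, uniqueness and the variational characterization come straight from Hilbert-space projection theory, precisely as in the model case (\ref{op})--(\ref{intr-2}). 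Under $(\mathcal P_3)$ the measure $\omega$ need not be of finite energy, so this shortcut is unavailable. The crude bound $|\int U^\omega\,d\mu|\leqslant\int U^{|\omega|}\,d\mu\leqslant M_A\,\mu(A)$ from (\ref{MA}) shows only that $I_f$ is finite on bounded $\mu$ (so that $\mathcal E^+_f(A)$ is large enough); the genuine difficulty is that, when $c_*(A)=\infty$, this does not by itself force $\hat w_f(A)>-\infty$, and one must instead show that $\mu\mapsto\int U^\omega\,d\mu$ is dominated by the energy norm on $\mathcal E^+(A)$. Granting this, the upper semicontinuity of $U^\omega$ on $\overline A$ together with the vague convergence of $(\mu_k)$ yields $\limsup_k\int U^\omega\,d\mu_k\leqslant\int U^\omega\,d\hat\omega^A$, the energy term being strongly continuous (and controlled from below by the principle of descent), so that $I_f(\hat\omega^A)\leqslant\liminf_k I_f(\mu_k)=\hat w_f(A)$, the reverse inequality being automatic.

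With existence in hand, characterization (a) follows by the usual perturbation argument. For the minimizer $\hat\omega^A$ and any $\mu\in\mathcal E^+_f(A)$, convexity gives $\hat\omega^A+t(\mu-\hat\omega^A)\in\mathcal E^+_f(A)$ for $t\in[0,1]$, and expanding
\[I_f\bigl(\hat\omega^A+t(\mu-\hat\omega^A)\bigr)-I_f(\hat\omega^A)=2t\int U^{\hat\omega^A-\omega}\,d(\mu-\hat\omega^A)+t^2\|\mu-\hat\omega^A\|^2\geqslant0,\]
then dividing by $t$ and letting $t\downarrow0$ gives $\int U^{\hat\omega^A-\omega}\,d\mu\geqslant\int U^{\hat\omega^A-\omega}\,d\hat\omega^A$ for all such $\mu$; specializing to $\mu=0$ and to $\mu=2\hat\omega^A$ forces $\int U^{\hat\omega^A-\omega}\,d\hat\omega^A=0$, which is (\ref{def2'}), and then (\ref{def1'}) follows. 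Conversely, any $\mu^*\in\mathcal E^+_f(A)$ obeying (\ref{def1'})--(\ref{def2'}) satisfies the identity $I_f(\mu)-I_f(\mu^*)=\|\mu-\mu^*\|^2+2\int U^{\mu^*-\omega}\,d(\mu-\mu^*)\geqslant\|\mu-\mu^*\|^2$, so $\mu^*$ is the (unique) minimizer and hence equals $\hat\omega^A$.

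Finally I would pass from (a) to the potential-theoretic characterization (b). To obtain (\ref{def1}), suppose the set $N:=\{x\in A:U^{\hat\omega^A}(x)<U^\omega(x)\}$ had positive inner capacity; splitting $N$ into the bounded pieces $N_m:=\{x\in N:U^\omega-U^{\hat\omega^A}>1/m,\ |x|<m\}$ and invoking the subadditivity of Lemma~\ref{str-sub}, some $N_m$ would have $c_*(N_m)>0$ and so carry a nonzero bounded measure $\mu\in\mathcal E^+_f(A)$ with $\int U^{\hat\omega^A-\omega}\,d\mu\leqslant-\mu(N_m)/m<0$, contradicting (\ref{def1'}); hence $c_*(N)=0$, i.e.\ (\ref{def1}). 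Since $\hat\omega^A\in\mathcal E^+(A)$ does not charge subsets of $A$ of inner capacity zero, (\ref{def1}) gives $U^{\hat\omega^A}\geqslant U^\omega$ $\hat\omega^A$-a.e., and combining this with (\ref{def2'}) --- the integral of the now nonnegative function $U^{\hat\omega^A-\omega}$ against $\hat\omega^A$ being zero --- forces (\ref{def2}). For the uniqueness within $\mathcal E^+(A)$ I would argue directly by energy: if $\mu_1,\mu_2\in\mathcal E^+(A)$ both satisfy (\ref{def1})--(\ref{def2}), then using $U^{\mu_i}=U^\omega$ $\mu_i$-a.e.\ and $U^{\mu_i}\geqslant U^\omega$ $\mu_j$-a.e.\ (again by negligibility of inner-capacity-zero sets for measures in $\mathcal E^+(A)$) one finds $\int U^{\mu_1-\mu_2}\,d\mu_1\leqslant0\leqslant\int U^{\mu_1-\mu_2}\,d\mu_2$, whence $I(\mu_1-\mu_2)\leqslant0$; by strict positive definiteness $\mu_1=\mu_2$, which identifies the measure of (b) with $\hat\omega^A$ and completes the proof.
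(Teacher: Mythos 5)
Your treatment of case $(\mathcal P_2)$ (reduction to orthogonal projection), the perturbation argument for (a), the passage to (b), and both uniqueness arguments all match the paper's proof in substance and are sound. The genuine gap is in case $(\mathcal P_3)$, at exactly the point you flag and then dismiss with ``Granting this''. The finiteness $\hat{w}_f(A)>-\infty$ cannot be granted: since $\mathcal E^+_f(A)$ is a cone stable under positive dilations, applying $I_f$ to $t\mu$, $t>0$, and optimizing in $t$ shows that $\hat{w}_f(A)>-\infty$ is \emph{equivalent} to the domination $\int U^{\omega^+}\,d\mu\leqslant C\|\mu\|$ on $\mathcal E^+(A)$ which you propose to ``instead show'' --- so your plan is circular, and this coercivity is precisely the analytic core of the theorem when $c_*(A)=\infty$. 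The paper proves it in two steps with no counterpart in your sketch: first for compact $K$, where $\|\tau\|^2\geqslant c(K)^{-1}$ for unit measures $\tau\in\mathcal E^+(K)$ confines the minimization to masses $\leqslant 2M_Kc(K)$ and a vague-compactness argument produces $\hat{\omega}^K$; then a \emph{uniform} total-mass bound $\hat{\omega}^K(\mathbb R^n)\leqslant C_{n,\alpha}\omega^+(\mathbb R^n)$ for all $K\in\mathfrak C_A$, obtained by integrating $U^{\hat{\omega}^K}=U^\omega$ (valid n.e.\ on $S(\hat{\omega}^K)$) against the capacitary measure of $S(\hat{\omega}^K)$, using Fubini and the Frostman maximum principle for $\alpha\leqslant2$, or \cite[Theorem~1.5]{L} for $\alpha>2$. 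This is the only place the constant $C_{n,\alpha}$ of (\ref{Lambda}) enters, and it yields $\hat{w}_f(K)\geqslant-2M_AL$ uniformly, hence (\ref{p2}).

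A second, independent flaw: your lower-semicontinuity step for the linear term is false for noncompact $A$. Vague convergence $\mu_k\to\hat{\omega}^A$ plus upper semicontinuity of $U^\omega$ on $\overline{A}$ does \emph{not} give $\limsup_k\int U^\omega\,d\mu_k\leqslant\int U^\omega\,d\hat{\omega}^A$: the inequality from \cite[Section~IV.4.4, Corollary~3]{B2} needs compactness (the paper invokes it only on compact $K$, where $U^\omega$ is bounded and u.s.c.), and it fails when mass escapes to infinity (unit masses drifting to infinity through a region where $U^\omega$ is bounded away from $0$ converge vaguely to $0$). Strong convergence does not rescue it either, since $\mu\mapsto\int U^\omega\,d\mu$ is strongly continuous only for $\omega\in\mathcal E$. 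The paper sidesteps the issue entirely: it never proves $I_f$ lower semicontinuous along a minimizing net on $A$; instead it shows the net $(\hat{\omega}^K)_{K\in\mathfrak C_A}$ is strong Cauchy via a parallelogram inequality, and then verifies the characterizing relations (\ref{def1}) and (\ref{def2}) for the strong--vague limit $\zeta$ directly --- through n.e.\ pointwise convergence of potentials along a subsequence \cite[p.~166, Remark]{F1} together with Lemma~\ref{str-sub}, and through a support-point argument combining (\ref{Sin}) with the joint lower semicontinuity of $(x,\mu)\mapsto U^\mu(x)$ --- and concludes $\zeta=\hat{\omega}^A$ by the already-established equivalence and uniqueness (your Step-1 analogue, which is the one part of your noncompact argument that survives). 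To repair your proof you would need to import both of these devices.
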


\begin{proof} Fixing $\nu\in\mathcal E^+_f(A)$, we shall first show that (\ref{def1}) and (\ref{def2}) hold true for $\nu$ in place of
$\hat{\omega}^A$ if and only if so do (\ref{def1'}) and (\ref{def2'}). It is enough to verify only the "if" part of this claim, the
opposite being obvious from the fact that any $\mu$-mea\-s\-ur\-ab\-le subset of $A$ with
$c_*(\cdot)=0$ is $\mu$-negligible for any $\mu\in\mathcal E^+(A)$.

Assuming, therefore, that (\ref{def1'}) and (\ref{def2'}) hold true (for $\nu$ in place of $\hat{\omega}^A$), suppose to the
contrary that (\ref{def1}) fails. But then there is compact $K\subset A$ such that $U^\nu<U^\omega$ on
$K$ while $c(K)>0$,\footnote{If $A$ is capacitable (e.g.\ Borel), we write $c(A):=c_*(A)=c^*(A)$.} hence
$\int U^{\nu-\omega}\,d\tau<0$ for any $\tau\in\mathcal E^+(K)$, $\tau\ne0$, which contradicts (\ref{def1'}), $\int f\,d\tau$
being obviously finite. Thus (\ref{def1}) does indeed hold, whence
\begin{equation}\label{pr-in}
U^\nu\geqslant U^\omega\text{ \ $\nu$-a.e.}
\end{equation}

Further, assuming to the contrary that (\ref{def2}) fails to hold, we infer from (\ref{pr-in}) that there exists
compact $Q\subset A$ such that $\nu(Q)>0$ while $U^\nu>U^\omega$ on $Q$. Together with (\ref{pr-in}), this yields $\int
U^{\nu-\omega}\,d\nu>0$, which however contradicts (\ref{def2'}).

The equivalence thereby verified enables us to prove the statement on the uniqueness in each of assertions (a) and (b). Indeed, suppose that (\ref{def1}) and (\ref{def2}) are fulfilled by some
$\nu,\nu'\in\mathcal E^+(A)$ in place of $\hat{\omega}^A$. Noting from
(\ref{def2}) that then necessarily $\nu,\nu'\in\mathcal E^+_f(A)$, we conclude by applying (\ref{def1'}) and (\ref{def2'}) to
each of $\nu$ and $\nu'$ that
 \[\langle\nu,\nu'\rangle\geqslant\int U^\omega\,d\nu'=\|\nu'\|^2,\quad\langle\nu',\nu\rangle\geqslant\int
 U^\omega\,d\nu=\|\nu\|^2.\]
Therefore,
  \[0\leqslant\|\nu-\nu'\|^2=\bigl(\|\nu\|^2-\langle\nu',\nu\rangle\bigr)+
  \bigl(\|\nu'\|^2-\langle\nu,\nu'\rangle\bigr)\leqslant0,\]
whence $\nu=\nu'$, by virtue of the strict positive definiteness of the $\alpha$-Riesz kernel.\smallskip

{\it Case $(\mathcal P_2)$.} Assume first that $(\mathcal P_2)$ holds; then the Gauss functional has the form
\begin{equation}\label{repr}
I_f(\mu)=\|\omega-\mu\|^2-\|\omega\|^2\text{ \ for all $\mu\in\mathcal E^+(A)$},
\end{equation}
whence
\begin{equation}\label{reprr}\mathcal E^+_f(A)=\mathcal E^+(A).\end{equation}
Thus the problem on the existence of the inner pseudo-balayage $\hat{\omega}^A$ is reduced to
that on the existence of the orthogonal projection of $\omega\in\mathcal E$ onto $\mathcal E^+(A)$, i.e.
\begin{equation*}\label{pr}
 \hat{\omega}^A\in\mathcal E^+(A)\text{ \ and \ }\|\omega-\hat{\omega}^A\|=\min_{\mu\in\mathcal E^+(A)}\,\|\omega-\mu\|.
\end{equation*}

Since the convex cone $\mathcal E^+(A)$ is strongly closed by $(\mathcal P_1)$, hence strongly complete, $\mathcal E^+$ being
strongly complete by the perfectness of the $\alpha$-Riesz kernel, applying \cite{E2} (Theorem~1.12.3 and Proposition~1.12.4(2)) shows
that such an orthogonal projection does exist, and it is uniquely characterized within $\mathcal E^+(A)$ by both
(\ref{def1'}) and (\ref{def2'}). In view of the equivalence of (a) and (b) proved above, this implies the theorem.\smallskip

{\it Case $(\mathcal P_3)$.} The remaining case $(\mathcal P_3)$ will be treated in four steps.\smallskip

{\it Step 1}. The purpose of this step is to show that the inner pseudo-balayage $\hat{\omega}^A$ exists if and only if there exists the (unique) measure
$\mu_0\in\mathcal E^+_f(A)$ satisfying (a) (equivalently, (b)), and then necessarily $\mu_0=\hat{\omega}^A$.

Assume first that $\hat{\omega}^A$ exists. To verify (\ref{def1}), suppose to the contrary that there is a compact set $K\subset
A$ with $c(K)>0$, such that $U^{\hat{\omega}^A}<U^\omega$ on $K$. A straightforward verification then shows that for any
$\tau\in\mathcal E^+(K)$, $\tau\ne0$, and any $t\in(0,\infty)$,
 \begin{align}\label{step1}
   I_f(\hat{\omega}^A+t\tau)-I_f(\hat{\omega}^A)=2t\int\bigl(U^{\hat{\omega}^A}-U^\omega\bigr)\,d\tau+
   t^2\|\tau\|^2.
 \end{align}
 As $\|\tau\|<\infty$, the value on the right in (\ref{step1}) (hence, also that on the left) is ${}<0$ when $t>0$ is
 small enough, which however contradicts Definition~\ref{deff1}, for $\hat{\omega}^A+t\tau\in\mathcal E^+_f(A)$.

Having thus established (\ref{def1}), we obtain
 \begin{equation}\label{In}
 U^{\hat{\omega}^A}\geqslant U^\omega\text{ \ $\hat{\omega}^A$-a.e.}
 \end{equation}

 Suppose now that (\ref{def2}) fails to hold; then there exists a compact set $Q\subset A$ with $\hat{\omega}^A(Q)>0$, such that
 $U^{\hat{\omega}^A}>U^\omega$ on $Q$, cf.\ (\ref{In}). Denoting $\upsilon:=\hat{\omega}^A|_Q$, we have
 $\hat{\omega}^A-t\upsilon\in\mathcal E^+_f(A)$ for all $t\in(0,1)$, hence
 \begin{align*}
   I_f(\hat{\omega}^A-t\upsilon)-I_f(\hat{\omega}^A)=-2t\int\bigl(U^{\hat{\omega}^A}-
   U^\omega\bigr)\,d\upsilon+t^2\|\upsilon\|^2,
 \end{align*}
 which again contradicts Definition~\ref{deff1} when $t$ is small enough. (Note that, by (\ref{def2}),
 \begin{equation}\label{Sin}
  U^{\hat{\omega}^A}\leqslant U^\omega\text{ \ on $S(\hat{\omega}^A)$},
 \end{equation}
because $U^\omega$ is upper semicontinuous (u.s.c.) on $\overline{A}$ by $(\mathcal P_3)$, while $U^{\hat{\omega}^A}$ is l.s.c.\ on $\mathbb R^n$.)

 For the "if" part of the claim, assume that (a) holds true for some (unique) $\mu_0\in\mathcal E_f^+(A)$. To show that then
 necessarily $\mu_0=\hat{\omega}^A$, we only need to verify that
 \begin{equation}\label{N}
 I_f(\mu)-I_f(\mu_0)\geqslant0\text{ \ for any $\mu\in\mathcal E_f^+(A)$}.
 \end{equation}
 But obviously
  \begin{align*}I_f(\mu)-I_f(\mu_0)&=\|\mu-\mu_0+\mu_0\|^2-2\int U^\omega\,d\mu-\|\mu_0\|^2+2\int U^\omega\,d\mu_0\\
  {}&=\|\mu-\mu_0\|^2+2\int U^{\mu_0-\omega}\,d(\mu-\mu_0),\end{align*}
 and relations (\ref{def1'}) and (\ref{def2'}) (with $\mu_0$ in place of $\hat{\omega}^A$) immediately lead to
 (\ref{N}).\smallskip

 {\it Step 2}. To complete the proof of the theorem, it thus remains to establish the existence of the inner pseu\-do-bal\-ay\-a\-ge
$\hat{\omega}^A$. To this end, suppose throughout this step that $A=K$ is {\it compact}. As $c(K)>0$, see (\ref{cap}), we may restrict ourselves to {\it nonzero} measures $\mu\in\mathcal E^+(K)$. For each of those $\mu$, there are $t\in(0,\infty)$ and $\tau\in\mathcal E^+(K)$ with $\tau(K)=1$ such that $\mu=t\tau$. The potential $U^{|\omega|}$ being bounded on $K$ by $(\mathcal P_3)$,
  \begin{align}\notag
  I_f(\mu)&=t^2\|\tau\|^2-2t\int U^\omega\,d\tau\geqslant t^2\|\tau\|^2-2t\int U^{\omega^+}\,d\tau\\
  {}&\geqslant t^2c(K)^{-1}-2tM_K=t^2\bigl(c(K)^{-1}-2M_Kt^{-1}\bigr),\label{est}
  \end{align}
 $M_K\in[0,\infty)$ being introduced by (\ref{MA}) (with $A:=K$). Hence, by virtue of (\ref{est}), $I_f(\mu)>0$ for all
 $\mu\in\mathcal E^+(K)$ having the property
  \[\mu(K)>2M_Kc(K)=:L_K\in[0,\infty).\]

  On the other hand, $\hat{w}_f(K)\leqslant0$, cf.\ (\ref{West}). In view of the above, $\hat{w}_f(K)$ would therefore be the
  same if $\mathcal E_f^+(K)$ in (\ref{W}) were replaced by
  \begin{equation}\label{LL}
 \mathcal E^+_{L_K}(K):=\bigl\{\mu\in\mathcal E^+(K):\ \mu(K)\leqslant L_K\bigr\},\end{equation}
 that is,
 \begin{equation}\label{L}
 \hat{w}_f(K)=\inf_{\mu\in\mathcal E^+_{L_K}(K)}\,I_f(\mu)=:\hat{w}_{f,L_K}(K).
 \end{equation}
Hence,
\[-\infty<-2M_KL_K\leqslant\hat{w}_f(K)\leqslant0.\]

 Choose a (minimizing) sequence $(\mu_j)\subset\mathcal E^+_{L_K}(K)$ such that
 \[\lim_{j\to\infty}\,I_f(\mu_j)=\hat{w}_{f,L_K}(K).\]
 Being vaguely bounded, cf.\ (\ref{LL}), the sequence $(\mu_j)$ is vaguely relatively compact \cite[Section~III.1,
 Proposition~15]{B2}, and so there is a subsequence $(\mu_{j_k})$ converging vaguely to some $\mu_0\in\mathfrak M^+(K)$. (Here
 we have used the
 first countability of the vague topology on $\mathfrak M$, see \cite[Lemma~4.4]{Z-arx}, as well as the fact that $\mathfrak M^+(K)$ is vaguely closed, see \cite[Section~III.2, Proposition~6]{B2}.) By the principle of descent
 \cite[Eq.~(1.4.5)]{L},
 \[\|\mu_0\|^2\leqslant\liminf_{k\to\infty}\,\|\mu_{j_k}\|^2.\]
 Furthermore, by \cite[Section~IV.4.4, Corollary~3]{B2},
 \[\int U^\omega\,d\mu_0\geqslant\limsup_{k\to\infty}\,\int U^\omega\,d\mu_{j_k}\in(-\infty,\infty),\]
 $U^\omega$ being bounded and u.s.c.\ on the (compact) set $K$. This altogether gives
 \[\hat{w}_{f}(K)\leqslant I_f(\mu_0)\leqslant\liminf_{k\to\infty}\,I_f(\mu_{j_k})=\hat{w}_{f,L_K}(K),\]
 which combined with (\ref{L}) shows that $\mu_0$ serves as the pseudo-balayage $\hat{\omega}^K$.\smallskip

 {\it Step 3}. Our next aim is to show that the constant $L_K$, satisfying (\ref{L}), can be defined to be independent of
 $K\in\mathfrak C_A$ large enough. (To be exact, here and in the sequel $K\in\mathfrak C_A$ is chosen to follow some $K_0$ with
 $c(K_0)>0$.)

By (\ref{def1}) and (\ref{Sin}), $U^{\hat{\omega}^K}=U^\omega$ n.e.\ on $\mathcal S:=S(\hat{\omega}^K)$, hence $\gamma_{\mathcal
S}$-a.e., where $\gamma_{\mathcal S}$ denotes the capacitary measure on $\mathcal S$ (see \cite[Theorem~2.5]{F1}). Since $U^{\gamma_{\mathcal
S}}\geqslant1$ holds true n.e.\ on $\mathcal S$, hence $\hat{\omega}^K$-a.e., we get, by Fubini's
theorem,
 \begin{align*}
   \hat{\omega}^K(\mathbb R^n)&=\int1\,d\hat{\omega}^K\leqslant
   \int U^{\gamma_{\mathcal S}}\,d\hat{\omega}^K=\int U^{\hat{\omega}^K}\,d\gamma_{\mathcal S}\\{}&=\int
   U^\omega\,d\gamma_{\mathcal S}=
   \int U^{\gamma_{\mathcal S}}\,d\omega\leqslant\int U^{\gamma_{\mathcal S}}\,d\omega^+.
    \end{align*}
As $U^{\gamma_{\mathcal S}}\leqslant1$ on $S(\gamma_{\mathcal S})$, applying the Frostman maximum principle if
$\alpha\leqslant2$ (see \cite[Theorem~1.10]{L}), or \cite[Theorem~1.5]{L} otherwise, gives
\begin{equation}\label{La}
\hat{\omega}^K(\mathbb R^n)\leqslant C_{n,\alpha}\omega^+(\mathbb R^n)=:L\text{ \ for all compact $K\subset A$},\end{equation}
where
\begin{equation}\label{Lambda}
C_{n,\alpha}:=\left\{
\begin{array}{cl} 1&\text{if \ $\alpha\leqslant2$},\\
2^{n-\alpha}&\text{otherwise}.\\ \end{array} \right.\end{equation}

We are thus led to the following conclusion, crucial to our proof.
\begin{itemize}
\item[$\P$] {\it We have
\begin{equation}\label{KA}
\hat{w}_f(K)=\hat{w}_{f,L}(K)\in[-2M_AL,0]\text{ \ for all\/ $K\in\mathfrak C_A$,}
\end{equation}
$M_A\in[0,\infty)$ and $L\in[0,\infty)$ being introduced by {\rm(\ref{MA})} and {\rm(\ref{La})}, respectively.
The infimum $\hat{w}_{f,L}(K)$ is an actual minimum with the minimizer $\hat{\omega}^K$.}\smallskip
 \end{itemize}

 {\it Step 4}. To establish the existence of $\hat{\omega}^A$ for noncompact $A$, we first note that the net $\bigl(\hat{w}_{f,L}(K)\bigr)_{K\in\mathfrak C_A}$ decreases, and moreover, by (\ref{KA}),
\begin{equation}\label{lim}
\infty<\lim_{K\uparrow A}\,\hat{w}_{f,L}(K)\leqslant0.
\end{equation}

For any compact $K,K'\subset A$ such that $K\subset K'$ and $c(K)>0$,
\[(\hat{\omega}^K+\hat{\omega}^{K'})/2\in\mathcal E_L^+(K'),\]
whence
\[\|\hat{\omega}^K+\hat{\omega}^{K'}\|^2-4\int
U^\omega\,d(\hat{\omega}^K+\hat{\omega}^{K'})\geqslant4\hat{w}_{f,L}(K')=4I_f(\hat{\omega}^{K'}).\]
Applying the parallelogram identity to $\hat{\omega}^K,\hat{\omega}^{K'}\in\mathcal E^+$ we therefore get
\begin{equation}\label{fund}
 \|\hat{\omega}^K-\hat{\omega}^{K'}\|^2\leqslant2I_f(\hat{\omega}^K)-2I_f(\hat{\omega}^{K'}).
\end{equation}
Noting from (\ref{lim}) that the net $\bigl(I_f(\hat{\omega}^K)\bigr)_{K\geqslant K_0}$ is Cauchy in $\mathbb R$, we infer from
(\ref{fund}) that the net $(\hat{\omega}^K)_{K\geqslant K_0}$ is strong Cauchy in $\mathcal E^+(A)$. The cone $\mathcal E^+(A)$
being strongly closed (hence strongly complete) by $(\mathcal P_1)$, there exists $\zeta\in\mathcal E^+(A)$ such that
\begin{equation}\label{conv}
 \hat{\omega}^K\to\zeta\text{ \ strongly (hence vaguely) in $\mathcal E^+(A)$ as $K\uparrow A$.}
\end{equation}
Moreover, $\zeta\in\mathcal E^+_L(A)$, the mapping $\mu\mapsto\mu(\mathbb R^n)$ being vaguely l.s.c.\ on $\mathfrak
M^+$.\footnote{See \cite[Section~IV.1, Proposition~4]{B2} applied to the (positive, l.s.c.) function $1$ on $\mathbb R^n$. It is
useful to point out that in the case where the set in question is {\it compact}, the mapping $\mu\mapsto\int g\,d\mu$ remains
vaguely l.s.c.\ on $\mathfrak M^+(K)$ for {\it any} l.s.c.\ function $g$ (not necessarily positive). This follows by replacing
$g$ by $g':=g+c\geqslant0$, where $c\in(0,\infty)$, a l.s.c.\ function on a compact set being lower bounded, and then by making
use of the vague continuity of the mapping $\mu\mapsto\mu(K)$ on $\mathfrak M^+(K)$.\label{flsc}}

We claim that this $\zeta$ serves as the inner pseudo-balayage $\hat{\omega}^A$. As shown above (Step~1), this will follow
once we verify (\ref{def1}) and (\ref{def2}) for $\zeta$ in place of $\hat{\omega}^A$.

To verify (\ref{def1}) (for $\zeta$ in place of $\hat{\omega}^A$), it is enough to do this for any given compact $K_*\subset A$.
The strong topology on $\mathcal E^+$ being first-countable, in view of (\ref{conv}) there is a subsequence
$(\hat{\omega}^{K_j})_{j\in\mathbb N}$ of the net $(\hat{\omega}^K)_{K\in\mathfrak C_A}$ such that $K_j\supset K_*$ for all $j$,
and
\begin{equation}\label{J}
\hat{\omega}^{K_j}\to\zeta\text{ \ strongly (hence vaguely) in $\mathcal E^+$ as $j\to\infty$.}
\end{equation}
Passing if necessary to a subsequence and changing the notations, we conclude from (\ref{J}), by use of \cite[p.~166,
Remark]{F1}, that
\begin{equation}\label{JJ}U^\zeta=\lim_{j\to\infty}\,U^{\hat{\omega}^{K_j}}\text{ \ n.e.\ on $\mathbb R^n$}.\end{equation}
Applying now (\ref{def1}) to each $\hat{\omega}^{K_j}$, and then letting $j\to\infty$, on account of the countable subadditivity
of inner capacity on Borel sets \cite[p.~144]{L} we infer from (\ref{JJ}) that (\ref{def1}) (with $\zeta$ in place of
$\hat{\omega}^A$) does indeed hold n.e.\ on $K_*$, whence n.e.\ on $A$.

To establish (\ref{def2}) for $\zeta$ in place of $\hat{\omega}^A$, we note from (\ref{Sin}) applied to $K_j$ that
\begin{equation}\label{Kj}
U^{\hat{\omega}^{K_j}}\leqslant U^\omega\text{ \ on $S(\hat{\omega}^{K_j})$},
\end{equation}
$(K_j)$ being the sequence chosen above. Since $(\hat{\omega}^{K_j})$ converges to $\zeta$ vaguely, see (\ref{J}), for every
$x\in S(\zeta)$ there exist a subsequence $(K_{j_k})$ of $(K_j)$ and points $x_{j_k}\in S(\hat{\omega}^{K_{j_k}})$ such that
$x_{j_k}$, $k\in\mathbb N$, approach $x$ as $k\to\infty$. Thus, by (\ref{Kj}),
\[U^{\hat{\omega}^{K_{j_k}}}(x_{j_k})\leqslant U^\omega(x_{j_k})\text{ \ for all $k\in\mathbb N$}.\]
Letting here $k\to\infty$, in view of the upper semicontinuity of $U^\omega$ on $\overline{A}$ and the lower semicontinuity of
the mapping $(x,\mu)\mapsto U^\mu(x)$ on $\mathbb R^n\times\mathfrak M^+$, where $\mathfrak M^+$ is equipped with the vague
topology \cite[Lemma~2.2.1(b)]{F1}, we obtain (\ref{def2}) for $\zeta$ in place of $\hat{\omega}^A$.

This implies that
\begin{equation}\label{xi'}
\zeta=\hat{\omega}^A,
\end{equation}
thereby completing the proof of the whole theorem.\end{proof}

\begin{remark}
Assume for a moment that $\omega$ is positive, and that $A$ is quasiclosed and Borel. If moreover either $\omega\in\mathcal E^+$, or $U^\omega|_A$ is bounded while $c^*(A)<\infty$,\footnote{A quasiclosed set of finite outer capacity is actually quasicompact \cite[Lemma~3.14]{Fu5}.} then Theorem~\ref{th-ps1} can be deduced from Fuglede's result \cite[Theorem~4.10]{Fu5} on the {\it outer} pseu\-do-balayage with respect to a perfect kernel on a locally compact (Hausdorff) space. The methods developed in the above proof are essentially different from those in \cite{Fu5}, which enabled us to establish the existence of the inner Riesz pseu\-do-balayage $\hat{\omega}^A$ for pretty general $\omega$ and $A$, see $(\mathcal P_1)$ and $(\mathcal P_2)$, or $(\mathcal P_1)$ and $(\mathcal P_3)$. In this regard, it is also worth noting that the above proof seems to admit a generalization to suitable perfect kernels on locally compact spaces, which we plan to pursue in future work.
\end{remark}

\subsection{Further properties of the inner pseudo-balayage}\label{sec-further}
The following theorem justifies the term "inner" pseudo-balayage.

\begin{theorem}\label{conv=ps}$\hat{\omega}^K\to\hat{\omega}^A$ strongly and vaguely in $\mathcal E^+$ as $K\uparrow A$.
\end{theorem}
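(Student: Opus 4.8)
The plan is to reduce to the case $(\mathcal P_2)$ and then to follow the scheme of Step~4 in the proof of Theorem~\ref{th-ps1}. Indeed, under $(\mathcal P_3)$ the asserted convergence has already been obtained there, see \eqref{conv} together with \eqref{xi'}; so it only remains to treat $(\mathcal P_2)$. In that case $\mathcal E^+_f(\cdot)=\mathcal E^+(\cdot)$ [cf.\ \eqref{reprr}], and, applying Case~$(\mathcal P_2)$ of Theorem~\ref{th-ps1} with $A$ replaced by a compact $K\in\mathfrak C_A$ (for which $(\mathcal P_1)$ holds trivially), each $\hat\omega^K$ is the orthogonal projection of $\omega$ onto $\mathcal E^+(K)$, while $\hat\omega^A$ is that onto $\mathcal E^+(A)$.

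First I would show that the net $(\hat\omega^K)_{K\geqslant K_0}$ is strong Cauchy. By \eqref{repr}, $I_f(\hat\omega^K)=\hat w_f(K)\geqslant-\|\omega\|^2$, and since the cones $\mathcal E^+(K)$ increase with $K$, the net $\bigl(\hat w_f(K)\bigr)_{K\geqslant K_0}$ decreases; being bounded below, it is therefore Cauchy in $\mathbb R$. For $K\subset K'$ the convexity of $\mathcal E^+(K')$ gives $(\hat\omega^K+\hat\omega^{K'})/2\in\mathcal E^+(K')$, so the parallelogram computation leading to \eqref{fund} applies verbatim (no mass bound being needed here, unlike in Step~3), yielding $\|\hat\omega^K-\hat\omega^{K'}\|^2\leqslant 2I_f(\hat\omega^K)-2I_f(\hat\omega^{K'})$. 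Hence $(\hat\omega^K)$ is strong Cauchy, and by the strong completeness of $\mathcal E^+(A)$ [guaranteed by $(\mathcal P_1)$] it converges strongly, hence vaguely, to some $\zeta\in\mathcal E^+(A)$.

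It then remains to identify $\zeta$ with $\hat\omega^A$, which I would do through the characterization (a) of Theorem~\ref{th-ps1}, i.e.\ relations \eqref{def1'}--\eqref{def2'}. Passing to the limit in $\langle\hat\omega^K-\omega,\hat\omega^K\rangle=0$ gives $\langle\zeta-\omega,\zeta\rangle=0$ at once, by strong continuity of the inner product. For the inequality $\langle\zeta-\omega,\mu\rangle\geqslant0$ with $\mu\in\mathcal E^+(A)$ arbitrary, I would test the variational inequality for $\hat\omega^K$ against the restriction $\mu|_K\in\mathcal E^+(K)$, so that $\langle\hat\omega^K-\omega,\mu|_K\rangle\geqslant0$, and let $K\uparrow A$, using that both $\hat\omega^K\to\zeta$ and $\mu|_K\to\mu$ strongly. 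By the uniqueness in Theorem~\ref{th-ps1}(a) (equivalently, Lemma~\ref{l:unique}) this forces $\zeta=\hat\omega^A$.

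The main obstacle I anticipate is the strong approximation $\mu|_K\to\mu$ as $K\uparrow A$, on which the last step hinges. I would deduce it from the identity $\|\mu-\mu|_K\|^2=I(\mu)-2I(\mu,\mu|_K)+I(\mu|_K)$ together with monotone convergence: since $\mu|_K$ increases to $\mu$ as $K\uparrow A$ (inner regularity of the Radon measure $\mu$) and the kernel $\kappa_\alpha$ is positive, each of the three energies on the right converges to $I(\mu)$, so the whole expression tends to $0$. (Alternatively, one may bypass the projection characterization by proving $\hat w_f(A)=\inf_{K\in\mathfrak C_A}\hat w_f(K)$ through the same approximation and then invoking the strong continuity of $I_f$ on $\mathcal E^+(A)$, valid under $(\mathcal P_2)$ by \eqref{repr}.)
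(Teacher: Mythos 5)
Your proposal is correct, and its skeleton coincides with the paper's own proof: case $(\mathcal P_3)$ is indeed already settled by combining (\ref{conv}) with (\ref{xi'}), and in case $(\mathcal P_2)$ the strong Cauchy property of the net $(\hat{\omega}^K)_{K\in\mathfrak C_A}$ is obtained exactly as in the paper, via the parallelogram estimate (\ref{fund}) together with the decreasing net $\bigl(\hat{w}_f(K)\bigr)$ bounded below by $-\|\omega\|^2$ (your remark that no mass bound is needed here, unlike in Step~3, is accurate). Where you genuinely diverge is the identification of the strong limit $\zeta$ with $\hat{\omega}^A$: you pass to the limit in the projection relations (\ref{def1'})--(\ref{def2'}) for each $\hat{\omega}^K$, testing (\ref{def1'}) against $\mu|_K$ and using the continuity of the inner product on $\mathcal E$, which hinges on the strong approximation $\mu|_K\to\mu$; the paper instead argues at the level of minimum values, proving $\lim_{K\uparrow A}\hat{w}_f(K)=\hat{w}_f(A)$ from the identity $I_f(\mu)=\lim_{K\uparrow A}I_f(\mu|_K)$ (obtained from \cite[Lemma~1.2.2]{F1} applied to the positive, l.s.c., $\mu$-integrable functions $\kappa_\alpha$, $U^{\omega^+}$, $U^{\omega^-}$) and then invoking the strong continuity of $I_f$ supplied by (\ref{repr})~--- which is precisely the alternative you sketch in your closing parenthesis. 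The two routes rest on the same approximation fact and are of comparable length; yours exploits the Hilbert-space geometry more explicitly and identifies the limit through the uniqueness in Theorem~\ref{th-ps1}(a), while the paper's avoids any limit passage in a variational inequality with a varying test measure. One point deserves tightening: your appeal to ``monotone convergence'' for the increasing net $(\mu|_K)_{K\in\mathfrak C_A}$ is not automatic, since monotone convergence may fail for uncountable nets. It is legitimate here either by citing \cite[Lemma~1.2.2]{F1} directly (monotone convergence for upper directed families of measures against positive l.s.c.\ functions), or by observing that $\|\mu-\mu|_K\|^2=\int_{(A\setminus K)\times(A\setminus K)}\kappa_\alpha\,d(\mu\otimes\mu)$ decreases in $K$, the kernel being positive, and tends to $0$ along an increasing sequence $(K_j)\subset\mathfrak C_A$ exhausting $\mu$ (such a sequence exists because $\mathbb R^n$ is $\sigma$-compact and $\mu$, being Radon, is $\sigma$-finite and inner regular), whence along the whole net.
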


\begin{proof}In case $(\mathcal P_3)$, this follows by substituting (\ref{xi'}) into (\ref{conv}).

It is thus left to consider case $(\mathcal P_2)$. Then $\omega\in\mathcal E$, and therefore $\hat{\omega}^K$, resp.\ $\hat{\omega}^A$, is the
orthogonal projection of $\omega$ onto the (convex, strongly complete) cone $\mathcal E^+(K)$, resp.\ $\mathcal E^+(A)$. A
slight modification of the proof of (\ref{fund}) shows that
\[\|\hat{\omega}^K-\hat{\omega}^{K'}\|^2\leqslant2I_f(\hat{\omega}^K)-2I_f(\hat{\omega}^{K'})\text{ \ whenever $K\subset
K'$}\quad(K,K'\in\mathfrak C_A).\]
Noting that the net $\bigl(\hat{w}_f(K)\bigr)_{K\in\mathfrak C_A}$ is decreasing and, by (\ref{repr}), bounded:
\[-\|\omega\|^2\leqslant \hat{w}_f(K)\leqslant0\text{ \  for all $K\in\mathfrak C_A$},\] we conclude from the above that the net
$(\hat{\omega}^K)_{K\in\mathfrak C_A}\subset\mathcal E^+(A)$ is strong Cauchy, and hence converges strongly and vaguely to some
(unique) $\mu_0\in\mathcal E^+(A)$. This implies
\[\hat{w}_f(A)\leqslant I_f(\mu_0)=\lim_{K\uparrow A}\,I_f(\hat{\omega}^K)=\lim_{K\uparrow A}\,\hat{w}_f(K),\]
the former equality being derived from the strong convergence of $(\hat{\omega}^K)$ to $\mu_0$ by use of (\ref{repr}). To verify
that this $\mu_0$ actually equals $\hat{\omega}^A$, it thus remains to show that
\begin{equation}\label{lll}\lim_{K\uparrow A}\,\hat{w}_f(K)\leqslant\hat{w}_f(A).\end{equation}
But for every $\mu\in\mathcal E^+(A)$,
\[I_f(\mu)=\lim_{K\uparrow A}\,I_f(\mu|_K)\geqslant\lim_{K\uparrow A}\,\hat{w}_f(K),\]
where the equality follows by applying \cite[Lemma~1.2.2]{F1} to each of the positive, l.s.c., $\mu$-integrable functions
$\kappa_\alpha$, $U^{\omega^+}$, and $U^{\omega^-}$, the set $A$ being $\mu$-measurable. Letting now $\mu$ range over $\mathcal
E^+(A)$ we get (\ref{lll}), thereby completing the proof of the theorem.\end{proof}

\begin{corollary}\label{cor-lambda}If $U^\omega$ is u.s.c.\ on $\overline{A}$ {\rm(}which holds in particular in case $(\mathcal P_3)${\rm)}, then
\begin{equation}\label{tmo}
 \hat{\omega}^A(\mathbb R^n)\leqslant C_{n,\alpha}\omega^+(\mathbb R^n),
\end{equation}
$C_{n,\alpha}$ being introduced by\/ {\rm(\ref{Lambda})}.
\end{corollary}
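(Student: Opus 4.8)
The plan is to deduce the total-mass bound (\ref{tmo}) for $\hat{\omega}^A$ by passing to the limit $K\uparrow A$ in the analogous bound for the pseudo-balayages $\hat{\omega}^K$ onto the compact sets $K\in\mathfrak C_A$ exhausting $A$. The crucial observation is that estimate (\ref{La}), namely $\hat{\omega}^K(\mathbb R^n)\leqslant C_{n,\alpha}\omega^+(\mathbb R^n)=:L$, was obtained in Step~3 of the proof of Theorem~\ref{th-ps1} using only relations (\ref{def1}) and (\ref{Sin}) for $\hat{\omega}^K$, together with Fubini's theorem applied to the capacitary measure $\gamma_{\mathcal S}$ on $\mathcal S:=S(\hat{\omega}^K)$ and the Frostman-type maximum principle (the case distinction in (\ref{Lambda}) arising precisely there). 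So first I would argue that this derivation remains available under the sole hypothesis of the corollary, that $U^\omega$ be u.s.c.\ on $\overline{A}$, in both cases $(\mathcal P_2)$ and $(\mathcal P_3)$.

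To this end I would re-establish (\ref{Sin}) for each compact $K\subset A$ directly from (\ref{def2}). By Theorem~\ref{th-ps1} (applied with $A:=K$, which is legitimate since $\mathcal E^+(K)$ is strongly closed, so $(\mathcal P_1)$ holds for $K$, and since $\omega$ still satisfies $(\mathcal P_2)$ or $(\mathcal P_3)$ relative to $K\subset\overline A$), the measure $\hat{\omega}^K$ obeys (\ref{def1}) and (\ref{def2}). As $U^{\hat{\omega}^K}=U^\omega$ holds $\hat{\omega}^K$-a.e.\ by (\ref{def2}), and $S(\hat{\omega}^K)\subset K\subset\overline{A}$, for each $x\in S(\hat{\omega}^K)$ I would choose points $x_j\to x$ at which this equality holds (these form a set of full $\hat{\omega}^K$-measure, hence dense in the support) and combine the lower semicontinuity of $U^{\hat{\omega}^K}$ with the assumed upper semicontinuity of $U^\omega$ on $\overline{A}$ to obtain $U^{\hat{\omega}^K}(x)\leqslant\liminf_j U^{\hat{\omega}^K}(x_j)=\liminf_j U^\omega(x_j)\leqslant U^\omega(x)$, which is (\ref{Sin}). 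With (\ref{def1}) and (\ref{Sin}) in hand, the capacitary-measure computation of Step~3 runs verbatim and yields $\hat{\omega}^K(\mathbb R^n)\leqslant L$ for every compact $K\subset A$.

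It then remains to pass to the limit. By Theorem~\ref{conv=ps}, $\hat{\omega}^K\to\hat{\omega}^A$ vaguely in $\mathcal E^+$ as $K\uparrow A$, so I would invoke the vague lower semicontinuity of the total-mass functional $\mu\mapsto\mu(\mathbb R^n)$ on $\mathfrak M^+$ (footnote~\ref{flsc}) to conclude that $\hat{\omega}^A(\mathbb R^n)\leqslant\liminf_{K\uparrow A}\,\hat{\omega}^K(\mathbb R^n)\leqslant L=C_{n,\alpha}\omega^+(\mathbb R^n)$, which is exactly (\ref{tmo}).

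The main point requiring care is twofold. First, one must notice that the compact-set bound (\ref{La}) is governed by the upper semicontinuity of $U^\omega$ on $\overline{A}$ rather than by the full strength of $(\mathcal P_3)$, so that it is equally available in case $(\mathcal P_2)$ under the extra u.s.c.\ hypothesis; re-deriving (\ref{Sin}) from (\ref{def2}) is what makes this transparent and gives a unified argument. Second, the total mass is only vaguely lower semicontinuous (not continuous) on $\mathfrak M^+$ for noncompact carriers, so one cannot expect equality in the limit; fortunately this is precisely the inequality direction needed to bound $\hat{\omega}^A(\mathbb R^n)$ from above by the common bound $L$ on the $\hat{\omega}^K(\mathbb R^n)$.
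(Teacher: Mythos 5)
Your proposal is correct and coincides with the paper's own proof: the paper likewise observes that, thanks to the upper semicontinuity of $U^\omega$ on $\overline{A}$, the derivation of the compact-set bound (\ref{La}) (resting only on (\ref{def1}) and (\ref{Sin}), the latter obtained exactly by your l.s.c.\ versus u.s.c.\ comparison on $S(\hat{\omega}^K)$) remains valid in case $(\mathcal P_2)$ as well, and then concludes via the vague convergence $\hat{\omega}^K\to\hat{\omega}^A$ of Theorem~\ref{conv=ps} together with the vague lower semicontinuity of $\mu\mapsto\mu(\mathbb R^n)$ on $\mathfrak M^+$. Your added care about the inequality direction under vague convergence is exactly the point the paper's one-line conclusion relies on.
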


\begin{proof}
In view of the upper semicontinuity of $U^\omega$ on $\overline{A}$, the proof of (\ref{La}), provided in case $(\mathcal P_3)$, remains
valid in case $(\mathcal P_2)$ as well. Hence, in both cases $(\mathcal P_2)$ and $(\mathcal P_3)$,
\[\hat{\omega}^K(\mathbb R^n)\leqslant C_{n,\alpha}\omega^+(\mathbb R^n)\text{ \ for all $K\in\mathfrak C_A$,}\]
which results in (\ref{tmo}) since the net $(\hat{\omega}^K)_{K\in\mathfrak C_A}$ converges vaguely to $\hat{\omega}^A$
(Theorem~\ref{conv=ps}) while the mapping $\mu\mapsto\mu(\mathbb R^n)$ is vaguely l.s.c.\ on $\mathfrak M^+$ (cf.\
footnote~\ref{flsc}).
\end{proof}

\section{The comparison of the concepts of pseudo-balayage and balayage}\label{nice}

The aim of Examples~\ref{ex1'}--\ref{ex2} below is to demonstrate that usual nice properties of the inner balayage may fail to hold when dealing with the inner pseu\-do-bal\-ay\-a\-ge, and this occurs even in the simplest case of the Dirac measure and a sphere.

$\P$ The first fact illustrating the difference between these two concepts is that
the inner pseu\-do-bal\-ay\-a\-ge may increase the total mass of a positive measure (see e.g.\ (\ref{dr''}), pertaining to $\alpha>2$). Recall that, if $\alpha\in(0,2]$, then, by \cite[Corollary~4.9]{Z-bal},
\begin{equation}\label{tmst}
\mu^A(\mathbb R^n)\leqslant\mu(\mathbb R^n)\text{ \ for any $\mu\in\mathfrak M^+$ and $A\subset\mathbb R^n$}.
\end{equation}

$\P$ The second one is that, for $\alpha>2$, there exist a set $A\subset\mathbb R^n$ which is not inner $\alpha$-thin at infinity\footnote{By \cite[Definition~3.1]{KM}, $A\subset\mathbb R^n$ is said to be {\it inner $\alpha$-thin at infinity} if
\begin{equation*}\label{sum}
 \sum_{j\in\mathbb N}\,\frac{c_*(A_j)}{q^{j(n-\alpha)}}<\infty,
 \end{equation*}
where $q\in(1,\infty)$ and $A_j:=A\cap\{x\in\mathbb R^n:\ q^j\leqslant|x|<q^{j+1}\}$. For $\alpha\in(0,2]$, see also \cite[Definition~2.1]{Z-bal2}, while for $\alpha=2$ and Borel $A$, see \cite[pp.~175--176]{Doob}.} and a measure $\mu\in\mathfrak M^+$ such that
\[\hat{\mu}^A(\mathbb R^n)>\mu(\mathbb R^n),\]
see e.g.\ (\ref{dr''}). In contrast to that, not being inner $\alpha$-thin at infinity is necessary and sufficient for equality to prevail in (\ref{tmst}) for {\it all\/} $\mu\in\mathfrak M^+$, see \cite[Corollary~5.3]{Z-bal2}.

\begin{example}\label{ex1'}Let $D\subset\mathbb R^n$ be a bounded (connected, open) domain, $\omega:=\varepsilon_{x_0}$, where $\varepsilon_{x_0}$ is the unit Dirac measure at $x_0\in D$, and let $A$ be the inverse of $\overline{D}\setminus\{x_0\}$ with respect to the sphere $S_{x_0,1}:=\{|x-x_0|=1\}$.
For these $A$ and $\omega$, $(\mathcal P_1)$ and $(\mathcal P_3)$ are fulfilled, and hence the pseu\-do-bal\-ay\-a\-ge $\hat{\varepsilon}_{x_0}^A$ exists and is unique (Theorem~\ref{th-ps1}).

Assume first that $\alpha\in(0,2]$. Since the balayage $\varepsilon_{x_0}^A$ of $\varepsilon_{x_0}$ onto $A$ is obviously of finite energy, Theorem~\ref{l-oo'} yields
  \begin{equation}\label{u0''}\hat{\varepsilon}_{x_0}^A=\varepsilon_{x_0}^A.\end{equation}
 Applying \cite[Section~IV.5.20]{L} we therefore infer that the pseu\-do-bal\-ay\-a\-ge $\hat{\varepsilon}_{x_0}^A$ is actually the
 Kelvin transform $\gamma_{\overline{D}}^*$ of the capacitary measure $\gamma_{\overline{D}}$ on $\overline{D}$ with
 respect to the sphere $S_{x_0,1}$.
 Hence, in particular,
\begin{equation}\label{2n2}
S(\hat{\varepsilon}_{x_0}^A)=\left\{
\begin{array}{cl}\partial A &\text{if \ $\alpha=2$},\\
A&\text{if \ $\alpha<2$},\\ \end{array} \right.\end{equation}
where $\partial A:=\partial_{\mathbb R^n}A$. The set $A$ not being $\alpha$-thin at infinity, we also get, in consequence of (\ref{u0''}) and \cite[Theorem~3.22]{FZ},
  \begin{equation}\label{u''}
  \hat{\varepsilon}_{x_0}^A(\mathbb R^n)=\varepsilon_{x_0}(\mathbb R^n)=1.
 \end{equation}

Let now $\alpha\in(2,n)$. We aim to show that then, in contrast to (\ref{u''}),\footnote{Compare with (\ref{tmnon}).}
 \begin{equation}\label{dr''}
 1=\varepsilon_{x_0}(\mathbb R^n)<\hat{\varepsilon}_{x_0}^A(\mathbb R^n)\leqslant2^{n-\alpha},
 \end{equation}
 the latter inequality being valid by virtue of (\ref{tmo}).

As is known, the capacitary measure $\gamma_{\overline{D}}$ on $\overline{D}$ is the (unique) measure in $\mathcal E^+(\overline{D})$ of (finite) total mass $c(\overline{D})$, and such that (see \cite[Sections~II.1.3, II.3.13]{L})
 \begin{align}
  U^{\gamma_{\overline{D}}}&\geqslant1\text{ \ n.e.\ on $\overline{D}$},\label{K2''}\\
  U^{\gamma_{\overline{D}}}&=1\text{ \ n.e.\ on $S(\gamma_{\overline{D}})$},\label{K0''}\\
   U^{\gamma_{\overline{D}}}&>1\text{ \ on $D$,}\label{K3''}
 \end{align}
(\ref{K3''}) being caused by the fact that for $\alpha\in(2,n)$, the $\alpha$-Riesz potential of a positive measure is superharmonic on $\mathbb R^n$ \cite[Theorem~1.4]{L}.

For the Kelvin transform $\gamma_{\overline{D}}^*$ of $\gamma_{\overline{D}}$, applying \cite[Eqs.~(4.5.2)--(4.5.4)]{L}
yields
\begin{gather}
U^{\gamma_{\overline{D}}^*}(x)=|x-x_0|^{\alpha-n}U^{\gamma_{\overline{D}}}(x^*),\notag\\
I(\gamma_{\overline{D}}^*)=I(\gamma_{\overline{D}}),\notag\\
\gamma_{\overline{D}}^*(\mathbb R^n)=U^{\gamma_{\overline{D}}}(x_0),\label{K4''}
\end{gather}
$x^*$ being the inverse of $x$ with respect to $S_{x_0,1}$. Combined with (\ref{K2''}) and (\ref{K0''}), this shows that
$\gamma_{\overline{D}}^*$ is a measure of the class $\mathcal E^+(A)$ having the properties
\begin{align*}
   U^{\gamma_{\overline{D}}^*}&\geqslant U^{\varepsilon_{x_0}}\text{ \ n.e.\ on $A$,}\\
   U^{\gamma_{\overline{D}}^*}&=U^{\varepsilon_{x_0}}\text{ \ n.e.\ on $S(\gamma_{\overline{D}}^*)$},
   \end{align*}
   whence (\ref{def1}) and (\ref{def2}) with $\omega:=\varepsilon_{x_0}$ and $\hat{\omega}^A:=\gamma_{\overline{D}}^*$. Thus, by virtue of Theorem~\ref{th-ps1},
   \begin{equation*}\label{eq-ex}
   \hat{\varepsilon}_{x_0}^A=\gamma_{\overline{D}}^*,
   \end{equation*}
   which together with (\ref{K3''}) and (\ref{K4''}) establishes (\ref{dr''}). Also note that
   \begin{equation*}\label{2n2'}
S(\hat{\varepsilon}_{x_0}^A)\subset\partial A\end{equation*}
(compare with (\ref{2n2})).
\end{example}

\begin{example}\label{exa1'}
A slight modification of arguments in Example~\ref{ex1'} shows that if $G\subset\mathbb R^n$ is an open, relatively compact set, then for any $\alpha\in(2,n)$ and any $x_0\in G$,
\begin{equation*}\label{drr''}
 1<\hat{\varepsilon}_{x_0}^{G^c}(\mathbb R^n)\leqslant2^{n-\alpha}.
 \end{equation*}
\end{example}

\begin{example}\label{ex}
 Let $\alpha\in(2,n)$, $A:=B_R^c:=\{|x|\geqslant R\}$, $R\in(0,\infty)$, and let $\omega:=\varepsilon_0$, where $\varepsilon_0$ denotes the unit Dirac
 measure at $x=0$. As follows from Example~\ref{ex1'},
 \[\hat{\varepsilon}_0^{B_R^c}=\gamma_{\overline{B}_r}^*,\]
 where $\gamma_{\overline{B}_r}$ is the capacitary measure on the ball $\overline{B}_r$, $r:=1/R$, and $\gamma_{\overline{B}_r}^*$ is the Kelvin transform of $\gamma_{\overline{B}_r}$ with respect to the unit sphere $S_1$.
 Therefore, by symmetry reasons applied to $\gamma_{\overline{B}_r}$, $\hat{\varepsilon}_0^{B_R^c}$ is uniformly distributed over the sphere $S_R$, and such that
 \begin{equation*}\label{dr}
 1<\hat{\varepsilon}_0^{B_R^c}(\mathbb R^n)\leqslant2^{n-\alpha}.
 \end{equation*}
\end{example}

\begin{example}\label{ex2}Let $\alpha\in(2,n)$, $A:=S_R$, $R\in(0,\infty)$, and let $\omega:=\varepsilon_0$. As shown in Example~\ref{ex},
\[S(\hat{\varepsilon}_0^{B_R^c})=S_R,\] which in view of Definition~\ref{deff1} gives
\[\hat{\varepsilon}_0^{S_R}=\hat{\varepsilon}_0^{B_R^c}.\]
Thus $\hat{\varepsilon}_0^{S_R}$ is uniformly distributed over the sphere $S_R$, and such that
\begin{equation*}\label{dr2}
 1<\hat{\varepsilon}_0^{S_R}(\mathbb R^n)\leqslant2^{n-\alpha}.
 \end{equation*}
 Also note that $\hat{\varepsilon}_0^{S_R}$ is, in fact, the Kelvin transform of the capacitary measure $\gamma_{S_r}$ (${}=\gamma_{\overline{B}_r}$), where $r:=1/R$, with respect to the unit sphere $S_1$.
\end{example}

\section{The inner Gauss variational problem}\label{sec-Gauss}

As before, consider a set $A\subset\mathbb R^n$ such that (\ref{cap}) and $(\mathcal P_1)$ are fulfilled, a (signed) measure
$\omega\in\mathfrak M$ satisfying either $(\mathcal P_2)$ or $(\mathcal P_3)$, and the external field $f$ given by
\begin{equation*}
f:=-U^\omega.
\end{equation*}
According to Theorem~\ref{th-ps1}, the problem of minimizing the Gauss functional $I_f(\mu)$,
\[I_f(\mu):=\|\mu\|^2+2\int f\,d\mu=\|\mu\|^2-2\int U^\omega\,d\mu,\]
over the class $\mathcal E^+_f(A)$ of all $\mu\in\mathcal E^+(A)$ with finite $I_f(\mu)$ is uniquely solvable, and its solution
$\hat{\omega}^A$, called the inner pseu\-do-bal\-ay\-a\-ge of $\omega$ onto $A$, is uniquely characterized within $\mathcal E^+_f(A)$
by both (\ref{def1'}) and (\ref{def2'})~--- or, equivalently, by both (\ref{def1}) and (\ref{def2}).

The rest of the paper is to show that the concept of inner pseudo-balayage serves as a powerful tool in the inner Gauss
variational problem, which reads as follows.

\begin{problem}\label{pr-main} Does there exist $\lambda_{A,f}$ minimizing $I_f(\mu)$ within $\breve{\mathcal E}^+(A)$? Here,
\[\breve{\mathcal E}^+(A):=\bigl\{\mu\in\mathcal E^+(A):\ \mu(\mathbb R^n)=1\}.\]
\end{problem}

Recent results on Problem~\ref{pr-main}, which was originated by C.F.~Gauss \cite{Gau}, are reviewed in the monographs \cite{BHS,ST} (see also numerous references therein); for some of the latest researches on this topic, see \cite{Dr0,Z-Oh,Z-Rarx}.

\begin{remark}\label{compared}If $A=K$ is compact while $f$ is l.s.c.\ on $K$, then the existence of the minimizer $\lambda_{K,f}$ can easily be verified, by use of the fact that the class $\breve{\mathcal E}^+(K)$
is vaguely compact, cf.\ \cite[Section~III.1.9, Corollary~3]{B2}, while the Gauss functional $I_f(\cdot)$ is vaguely l.s.c.\ on $\mathfrak M^+(K)$, the
latter being obvious from the principle of descent and the vague lower semicontinuity of the mapping $\mu\mapsto\int f\,d\mu$ on
$\mathfrak M^+(K)$ (footnote~\ref{flsc}).
However, such a proof, based on the vague topology only, is no longer applicable if either $A$ is noncompact, or $f$ is not l.s.c.

To investigate Problem~\ref{pr-main} in the general case where $A$ is noncompact and/or $f$ is not l.s.c.,  we have recently developed an approach based on the systematic use of both the strong and vague topologies on the pre-Hil\-bert space $\mathcal E$,
which utilized essentially the perfectness of the Riesz kernels, see \cite{Z-Rarx}. However, if $c_*(A)=\infty$, then the analysis
performed in \cite{Z-Rarx} was only limited to $\alpha\leqslant2$ and $\omega\geqslant0$, being mainly based on the theory of inner balayage for positive measures.

Motivated by this observation, we generalize the approach, suggested in \cite{Z-Rarx}, to {\it arbitrary} $\alpha\in(0,n)$ and {\it signed} $\omega$,
by use of the theory of inner pseu\-do-bal\-ay\-a\-ge, developed in Sections~\ref{sec-pseudo}, \ref{nice} above. The results thereby established are formulated in
Section~\ref{sec-s-uns},
and are proved in Sections~\ref{secprr1}--\ref{sec-pr2}.  It is worth emphasizing that those results
improve substantially many recent ones from
\cite{Dr0,Z-Rarx} (see Section~\ref{rem-comp} for some details).
\end{remark}

\subsection{Preliminary results} To begin with, observe that under either of assumptions $(\mathcal P_2)$ or $(\mathcal P_3)$, the Gauss
functional $I_f(\mu)$ is finite for all $\mu\in\breve{\mathcal E}^+(A)$. Thus
\begin{equation}\label{breve}
\breve{\mathcal E}^+(A)\subset\mathcal E^+_f(A),
\end{equation}
and therefore
\begin{equation}\label{w}
\min_{\mu\in\mathcal E^+_f(A)}\,I_f(\mu)=:\hat{w}_f(A)\leqslant w_f(A):=\inf_{\mu\in\breve{\mathcal E}^+(A)}\,I_f(\mu).
\end{equation}

\begin{lemma}\label{wfin} $-\infty<w_f(A)<\infty$.
\end{lemma}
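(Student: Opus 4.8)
The plan is to split the claim into its two halves, which rest on completely different (and very short) arguments. For the lower bound $w_f(A)>-\infty$, I would simply invoke the chain already recorded in (\ref{w}): since $\breve{\mathcal E}^+(A)\subset\mathcal E^+_f(A)$ by (\ref{breve}), the infimum defining $w_f(A)$ is taken over a subclass of the one defining $\hat{w}_f(A)$, whence $w_f(A)\geqslant\hat{w}_f(A)$. But Theorem~\ref{th-ps1} asserts that the inner pseudo-balayage exists, so $\hat{w}_f(A)$ is an actual minimum and in particular $-\infty<\hat{w}_f(A)\leqslant0$, see (\ref{p2}). Combining these, $w_f(A)\geqslant\hat{w}_f(A)>-\infty$.

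For the upper bound $w_f(A)<\infty$, the only thing to check is that the competing class $\breve{\mathcal E}^+(A)$ is nonempty; finiteness of the functional on each of its members is then automatic from (\ref{breve}), as $\mathcal E^+_f(A)$ is by definition the class on which $I_f$ is finite. To produce a competitor I would use the standing hypothesis (\ref{cap}) that $c_*(A)>0$. By the definition of inner capacity this yields a compact set $K\subset A$ with $c(K)>0$; the associated capacitary measure $\gamma_K$ then belongs to $\mathcal E^+(K)\subset\mathcal E^+(A)$, is nonzero, and is bounded with $\gamma_K(\mathbb R^n)=c(K)\in(0,\infty)$. Normalizing, $\mu_1:=\gamma_K/c(K)$ lies in $\breve{\mathcal E}^+(A)$, and hence $w_f(A)\leqslant I_f(\mu_1)<\infty$. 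Alternatively, by \cite[Lemma~2.3.1]{F1} the hypothesis $c_*(A)>0$ already guarantees $\mathcal E^+(A)\neq\{0\}$, and restricting any nonzero member to a compact set of positive measure and renormalizing produces the required competitor.

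There is essentially no obstacle here, since both bounds reduce to facts established earlier in the paper. If anything merits care, it is the passage from $c_*(A)>0$ to an admissible normalized competitor: one must ensure the chosen measure is simultaneously of finite energy, bounded (so that it can be rescaled to unit mass), and concentrated on $A$. The capacitary measure of a compact subset of positive capacity satisfies all three requirements at once, so the verification is immediate.
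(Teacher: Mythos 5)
Your proposal is correct, and the lower bound is handled exactly as in the paper: combining (\ref{w}) with (\ref{p2}), i.e.\ with the existence of the inner pseudo-balayage from Theorem~\ref{th-ps1}. For the upper bound, however, you take a genuinely different route. The paper does not construct a competitor at all: it invokes \cite[Lemma~5]{Z5a}, by which $w_f(A)<\infty$ is \emph{equivalent} to $c_*\bigl(\{x\in A:\ |f|(x)<\infty\}\bigr)>0$, and then verifies this inequality from (\ref{cap}) together with the finiteness of $f$ n.e.\ on $\mathbb R^n$, the passage from these two facts requiring the strengthened countable subadditivity of inner capacity (Lemma~\ref{str-sub}). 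You instead exhibit an explicit element of $\breve{\mathcal E}^+(A)$ --- the normalized capacitary measure $\mu_1:=\gamma_K/c(K)$ of a compact $K\subset A$ with $c(K)>0, $ whose existence follows from the very definition of $c_*$ --- and conclude $w_f(A)\leqslant I_f(\mu_1)<\infty$ via (\ref{breve}). Your argument is more elementary and self-contained (no external citation, no subadditivity lemma), but note where the weight is carried: the finiteness of $I_f(\mu_1)$ is not a triviality of the definition of $\mathcal E^+_f(A)$ alone; it is the observation preceding (\ref{breve}), which rests on the standing hypotheses $(\mathcal P_2)$ or $(\mathcal P_3)$ (Cauchy--Schwarz in case $(\mathcal P_2)$, boundedness of $U^{|\omega|}$ on $A$ in case $(\mathcal P_3)$). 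The paper's route via \cite[Lemma~5]{Z5a} buys generality: it characterizes finiteness of $w_f(A)$ for external fields that need only be finite on a set of positive inner capacity, which is exactly what is needed for the extension in Remark~\ref{th-ext}, where $f=u+U^\vartheta$ with $u$ possibly infinite off such a set; there your competitor argument would require extra care, since $I_f$ need not be finite on all of $\breve{\mathcal E}^+(A)$. Under the lemma's standing assumptions, both proofs are complete and correct.
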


\begin{proof}
According to \cite[Lemma~5]{Z5a}, $w_f(A)<\infty$ is equivalent to the inequality
\[c_*\bigl(\{x\in A:\ |f|(x)<\infty\}\bigr)>0,\]
which indeed holds true by virtue of (\ref{cap}) and the fact that $f$ is finite n.e.\ on $\mathbb R^n$. (Here the strengthened
version of countable subadditivity for inner capacity has been utilized, see Lemma~\ref{str-sub}.) Finally, combining (\ref{w}) and (\ref{p2}) gives $w_f(A)>-\infty$.\end{proof}

The solution $\lambda_{A,f}$ to Problem~\ref{pr-main} is {\it unique} (if it exists), which can be proved by use of the convexity of the class
$\breve{\mathcal E}^+(A)$ and the parallelogram identity in the pre-Hil\-bert space $\mathcal E$. Such $\lambda_{A,f}$ is
said to be {\it the inner $f$-weighted equilibrium measure}.

The following theorem, providing characteristic properties of $\lambda_{A,f}$, can be derived from the author's earlier paper
\cite{Z5a} (see Theorems~1, 2 and Proposition~1 therein).

\begin{theorem}\label{th-ch2}For\/ $\lambda\in\breve{\mathcal E}^+(A)$ to be the\/ {\rm(}unique\/{\rm)} solution\/
$\lambda_{A,f}$ to Problem\/~{\rm\ref{pr-main}}, it is necessary and sufficient that either of the following two inequalities be
fulfilled:
\begin{align}U_f^\lambda&\geqslant\int U_f^\lambda\,d\lambda\text{ \ n.e.\ on\/ $A$},\label{1}\\
U_f^\lambda&\leqslant w_f(A)-\int f\,d\lambda\text{ \ $\lambda$-a.e.\ on\/ $A$,}\label{2}
\end{align}
where\/
\[U_f^\lambda:=U^\lambda+f\]
is said to be the\/ $f$-weighted potential of\/ $\lambda$.
If\/ {\rm(\ref{1})} or\/ {\rm(\ref{2})} holds true, then actually
\begin{equation}\label{cc}
\int U_f^{\lambda}\,d\lambda=w_f(A)-\int f\,d\lambda=:c_{A,f}\in(-\infty,\infty),
\end{equation}
$c_{A,f}$ being referred to as the inner $f$-weighted equilibrium constant.\footnote{Similarly to \cite[p.~27]{ST}, $c_{A,f}$
might also be referred to as {\it the inner modified Robin constant}.}
\end{theorem}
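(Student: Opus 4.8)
The plan is to treat Problem~\ref{pr-main} as a convex Gauss variational problem and to read off the two characterizations from the first-order variational inequality; this specializes the general scheme of \cite{Z5a}, whose Theorems~1, 2 and Proposition~1 apply once one checks that the present standing assumptions $(\mathcal P_1)$, (\ref{cap}), and $(\mathcal P_2)$ or $(\mathcal P_3)$ place us in their framework (finiteness of $w_f(A)$ being guaranteed by Lemma~\ref{wfin}, and $f$ being $\mu$-integrable for every $\mu\in\breve{\mathcal E}^+(A)$ by (\ref{breve})). I would spell out the mechanism as follows.

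First I would record the basic quadratic identity. For $\lambda,\mu\in\breve{\mathcal E}^+(A)$ and $t\in[0,1]$, the convex combination $\lambda+t(\mu-\lambda)$ again lies in $\breve{\mathcal E}^+(A)$ (its total mass stays $1$), and a direct expansion using the bilinearity of $\langle\cdot,\cdot\rangle$ gives
\[
I_f\bigl(\lambda+t(\mu-\lambda)\bigr)-I_f(\lambda)=2t\int U_f^\lambda\,d(\mu-\lambda)+t^2\|\mu-\lambda\|^2,
\]
where every integral is finite by (\ref{breve}) and the finite energy of $\lambda,\mu$. Letting $t\downarrow0$ shows that, if $\lambda=\lambda_{A,f}$, then necessarily the variational inequality $\int U_f^\lambda\,d\mu\geqslant\int U_f^\lambda\,d\lambda$ holds for all $\mu\in\breve{\mathcal E}^+(A)$; conversely, taking $t=1$ shows that this inequality forces $I_f(\mu)\geqslant I_f(\lambda)+\|\mu-\lambda\|^2\geqslant I_f(\lambda)$, i.e.\ $\lambda$ is the (unique) minimizer. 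Thus the whole theorem reduces to proving that the variational inequality is equivalent to each of (\ref{1}) and (\ref{2}).

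Next I would pass between the integral inequality and the pointwise relations. Writing $c:=\int U_f^\lambda\,d\lambda$, the variational inequality reads $\int U_f^\lambda\,d\mu\geqslant c$ for all $\mu\in\breve{\mathcal E}^+(A)$. To obtain (\ref{1}), suppose $U_f^\lambda<c$ on some compact $K\subset A$ with $c_*(K)>0$; the normalized capacitary measure of $K$ then belongs to $\breve{\mathcal E}^+(A)$ (it has finite energy, and $\int f\,d\cdot$ is finite in both cases $(\mathcal P_2)$ and $(\mathcal P_3)$) and violates the inequality, a contradiction, so $U_f^\lambda\geqslant c$ n.e.\ on $A$, which is (\ref{1}); the passage from individual compacta to all of $A$ uses the strengthened countable subadditivity of inner capacity (Lemma~\ref{str-sub}). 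Conversely, (\ref{1}) gives $U_f^\lambda\geqslant c$ $\mu$-a.e.\ for every $\mu\in\breve{\mathcal E}^+(A)$ (null sets for $c_*$ being $\mu$-negligible), hence $\int U_f^\lambda\,d\mu\geqslant c$, recovering the variational inequality. Since by (\ref{1}) we also have $U_f^\lambda\geqslant c$ $\lambda$-a.e.\ while $\int U_f^\lambda\,d\lambda=c$, it follows that $U_f^\lambda=c$ $\lambda$-a.e., which yields (\ref{2}); computing $c=\|\lambda\|^2+\int f\,d\lambda=w_f(A)-\int f\,d\lambda$ from $I_f(\lambda)=w_f(A)$ then gives (\ref{cc}), the finiteness being ensured by Lemma~\ref{wfin}.

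Finally, the sufficiency of (\ref{2}) on its own is the neatest point, and I would isolate it: integrating (\ref{2}) against $\lambda$ (of unit mass) gives $\|\lambda\|^2+\int f\,d\lambda=\int U_f^\lambda\,d\lambda\leqslant w_f(A)-\int f\,d\lambda$, i.e.\ $I_f(\lambda)\leqslant w_f(A)$, and since $w_f(A)$ is the infimum this forces $I_f(\lambda)=w_f(A)$, so $\lambda=\lambda_{A,f}$. I expect the main obstacle to be the capacity-theoretic translation of the integral inequality into the n.e.\ statement (\ref{1})~--- in particular, the careful choice of admissible test measures of finite energy on compacta of positive inner capacity, and the handling of the set $\{U^\omega=+\infty\}$, where $f$ and $U_f^\lambda$ take the value $-\infty$ but which has inner capacity zero and is therefore $\lambda$-negligible.
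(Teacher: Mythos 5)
Your proposal is correct and follows essentially the same route as the paper, which proves Theorem~\ref{th-ch2} simply by invoking Theorems~1, 2 and Proposition~1 of \cite{Z5a} — precisely the convex variational scheme (quadratic expansion along $\lambda+t(\mu-\lambda)$, the resulting variational inequality, its capacity-theoretic translation into the n.e.\ statement (\ref{1}) via normalized capacitary measures on compacta of positive capacity and Lemma~\ref{str-sub}, and the direct integration argument for the sufficiency of (\ref{2})) that you reconstruct in full. Your details check out, including the finiteness of all integrals under $(\mathcal P_2)$ or $(\mathcal P_3)$ via (\ref{breve}) and Lemma~\ref{wfin}, so nothing is missing.
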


\begin{remark}\label{rem1''} If $f$ is l.s.c.\ on $\overline{A}$ (which occurs e.g.\ in case $(\mathcal P_3)$),
then, by (\ref{2}),
\[U_f^{\lambda_{A,f}}\leqslant c_{A,f}\text{ \ on\/ $S(\lambda_{A,f})$},\]
which combined with (\ref{1}) gives
\[U_f^{\lambda_{A,f}}=c_{A,f}\text{ \ n.e.\ on\/ $A\cap S(\lambda_{A,f})$}.\]
\end{remark}

\section{On the existence of $\lambda_{A,f}$ and its properties}\label{sec-s-uns}

In all that follows, except for Corollary~\ref{th3-cor'}, we assume $A$ and $f$ to satisfy the permanent requirements, reminded at the beginning of Section~\ref{sec-Gauss}.

\subsection{On the solvability of Problem~\ref{pr-main}}\label{sec-s-uns1}
Sufficient and/or necessary conditions for the existence of the (unique) solution $\lambda_{A,f}$ to Problem~\ref{pr-main} are established in Theorems~\ref{th-solv1}, \ref{th-solv2}, \ref{th-unsolv}, \ref{th3} and Corollaries~\ref{cor-qu}, \ref{th3-cor2}, \ref{th3-cor}, \ref{th3-cor'} below.

\begin{theorem}\label{th-solv1} For $\lambda_{A,f}$ to exist, it is sufficient that\/\footnote{See Remark~\ref{th-ext} below for
an extension of Theorem~\ref{th-solv1} and Corollary~\ref{cor-qu}.}
\begin{equation}\label{cap-f}
c_*(A)<\infty.
\end{equation}
\end{theorem}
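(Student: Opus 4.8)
The plan is to run the classical strong-topology variational argument on a minimizing sequence, invoking the finiteness of $c_*(A)$ only at the single point where it is genuinely needed, namely to forbid the escape of mass to infinity. I do not expect the inner pseudo-balayage to be required for this \emph{sufficient} condition; it is rather the tool reserved for the sharper solvability criteria (and for the converse direction when $c_*(A)=\infty$).

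First I would fix a minimizing sequence $(\mu_j)\subset\breve{\mathcal E}^+(A)$, so $\mu_j(\mathbb R^n)=1$ and $I_f(\mu_j)\to w_f(A)$, the latter being finite by Lemma~\ref{wfin}. Since $\breve{\mathcal E}^+(A)$ is convex, $(\mu_j+\mu_k)/2\in\breve{\mathcal E}^+(A)$, whence $I_f\bigl((\mu_j+\mu_k)/2\bigr)\geqslant w_f(A)$. Expanding via the parallelogram identity in the pre-Hilbert space $\mathcal E$ gives the standard relation
\[\tfrac12\|\mu_j-\mu_k\|^2=I_f(\mu_j)+I_f(\mu_k)-2I_f\bigl((\mu_j+\mu_k)/2\bigr)\leqslant I_f(\mu_j)+I_f(\mu_k)-2w_f(A),\]
so that $(\mu_j)$ is strong Cauchy, and in particular $\sup_j\|\mu_j\|<\infty$. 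By $(\mathcal P_1)$ the cone $\mathcal E^+(A)$ is strongly closed, hence strongly complete, so $\mu_j\to\lambda_0$ both strongly and vaguely for some $\lambda_0\in\mathcal E^+(A)$.

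The crux is to show $\lambda_0(\mathbb R^n)=1$, i.e.\ that no mass leaks out; this is the only step that uses (\ref{cap-f}). I would establish the tail estimate $c_*(A_R)\to0$ as $R\to\infty$, where $A_R:=A\cap\{|x|>R\}$. Granting it, the elementary capacity inequality $\mu(E)^2\leqslant c_*(E)\,\|\mu\|^2$ (valid for $\mu\in\mathcal E^+(A)$ and $E\subset A$, obtained by testing against the equilibrium measure of $E$ as in the proof of (\ref{La})) yields $\mu_j(A_R)^2\leqslant c_*(A_R)\sup_k\|\mu_k\|^2$, whence $\sup_j\mu_j(A_R)\to0$; the family $(\mu_j)$ is thus tight and vague convergence upgrades to convergence of total masses, giving $\lambda_0(\mathbb R^n)=\lim_j\mu_j(\mathbb R^n)=1$. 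To obtain $c_*(A_R)\to0$ I would argue by contradiction: if $\liminf_R c_*(A_R)>0$, choose compacta $K_k\subset A\cap\{R_k<|x|<S_k\}$ with $c(K_k)>\delta>0$ along radii $R_k\uparrow\infty$ growing so fast that the $K_k$ are mutually far apart; summing the equilibrium measures of $K_1,\dots,K_N$ produces measures whose squared energy grows only linearly in $N$ (the cross terms are summable since the Riesz kernel decays at infinity) while their total mass grows linearly, forcing $c_*(A)=\infty$ through $c_*(A)=\sup\{\mu(\mathbb R^n)^2/\|\mu\|^2\}$, contrary to (\ref{cap-f}). (Equivalently, (\ref{cap-f}) renders $A$ inner $\alpha$-thin at infinity, and one may instead invoke the known mass behaviour of energy-bounded nets on such sets.)

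Finally, with $\lambda_0\in\breve{\mathcal E}^+(A)$ in hand I would identify it as the minimizer. Strong convergence gives $\|\mu_j\|\to\|\lambda_0\|$, while the upper semicontinuity of $U^\omega$ on $\overline A$ in case $(\mathcal P_3)$, respectively the strong continuity of $\mu\mapsto\langle\omega,\mu\rangle$ in case $(\mathcal P_2)$, yields $\limsup_j\int U^\omega\,d\mu_j\leqslant\int U^\omega\,d\lambda_0$. Combining these, $I_f(\lambda_0)\leqslant\liminf_j I_f(\mu_j)=w_f(A)$, and the reverse inequality holds because $\lambda_0$ competes in $\breve{\mathcal E}^+(A)$; hence $I_f(\lambda_0)=w_f(A)$ and $\lambda_0=\lambda_{A,f}$, uniqueness being already noted. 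The main obstacle is precisely the tail-capacity bound $c_*(A_R)\to0$: it is the quantitative form of the principle that finite capacity forbids mass at infinity, and it is there that (\ref{cap-f}) does all the work, the remainder being the routine strong/vague machinery of the perfect Riesz kernel.
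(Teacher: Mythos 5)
Your proposal is correct and follows the same overall skeleton as the paper's proof (minimizing sequence, strong/vague convergence to a limit in $\mathcal E^+(A)$ via $(\mathcal P_1)$ and perfectness, capacity-based prevention of mass escape, then lower semicontinuity of $I_f$ along the sequence), but the crucial tightness step is implemented by a genuinely different argument. The paper takes the inner capacitary measures $\gamma_{A\setminus K}$ of the tails: since $\|\gamma_{A\setminus K}\|^2=c_*(A\setminus K)$ decreases as $K\uparrow\mathbb R^n$, the net $(\gamma_{A\setminus K})$ is strong Cauchy by \cite[Lemma~2.2]{L}, converges vaguely to zero, hence (perfectness) strongly to zero; the Cauchy--Schwarz bound $\int 1_{A\setminus K}\,d\mu_j\leqslant\|\gamma_{A\setminus K}\|\,\|\mu_j\|$ then kills the escaping mass. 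You instead prove the equivalent tail decay $c_*(A\cap\{|x|>R\})\to0$ directly, by the blocks-at-infinity contradiction: widely separated compacta of capacity ${}>\delta$, summable cross energies via inductive choice of radii, and the variational formula $c_*(A)=\sup\mu(\mathbb R^n)^2/\|\mu\|^2$ forcing $c_*(A)=\infty$. Your construction is sound (the possibly unbounded $c(K_k)$ is harmless, since the mass-to-energy ratio of the partial sums still blows up) and more elementary, avoiding the strong convergence of capacitary measures, while the paper's route is shorter and reuses the perfectness machinery; after that you apply the very same Cauchy--Schwarz estimate as the paper. Two small points worth making explicit in a write-up: first, in case $(\mathcal P_3)$ the inequality $\limsup_j\int U^\omega\,d\mu_j\leqslant\int U^\omega\,d\lambda_0$ does \emph{not} follow from upper semicontinuity and vague convergence alone on a noncompact set --- it genuinely requires the tightness and the boundedness $M_A<\infty$ you have already secured (the paper handles the same issue dually, by adding a constant to the l.s.c.\ field $f=-U^\omega$ and exploiting that all competing measures have unit mass); second, the upgrade from vague convergence plus tightness to $\lambda_0(\mathbb R^n)=1$ uses the u.s.c.\ of indicators of closed balls (as in the paper's appeal to \cite[Section~IV.4.4, Corollary~3]{B2}), which deserves a line. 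Neither is a gap in substance, only in level of detail.
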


\begin{corollary}\label{cor-qu}
 $\lambda_{A,f}$ does exist whenever $A$ is quasicompact.
\end{corollary}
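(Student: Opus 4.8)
The plan is to derive Corollary~\ref{cor-qu} as an immediate consequence of Theorem~\ref{th-solv1}, the only thing requiring verification being that quasicompactness of $A$ forces the finiteness condition (\ref{cap-f}). Recall (Fuglede~\cite{F71}) that $A$ is quasicompact precisely when it can be approximated in outer capacity by compact sets; in particular, taking the approximation parameter equal to $1$, there is a compact set $K\subset\mathbb R^n$ with $c^*(A\setminus K)<1$.

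First I would combine this with the (finite) subadditivity of outer capacity and the finiteness of the capacity of any compact set to obtain
\[
c^*(A)\leqslant c^*(K)+c^*(A\setminus K)<c(K)+1<\infty,
\]
whence $c_*(A)\leqslant c^*(A)<\infty$, so that (\ref{cap-f}) indeed holds. Here I use that $A\subset K\cup(A\setminus K)$ together with $c^*(K)=c(K)<\infty$, $K$ being compact.

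Next I would observe that the remaining hypotheses of Theorem~\ref{th-solv1} are already in force. Since a compact set is closed, a quasicompact $A$ is in particular quasiclosed, and hence $(\mathcal P_1)$ holds (see the footnote to $(\mathcal P_1)$, via \cite[Theorem~3.9]{Z-Rarx}); moreover (\ref{cap}) and one of $(\mathcal P_2)$, $(\mathcal P_3)$ belong to the permanent requirements standing throughout Section~\ref{sec-s-uns}. With (\ref{cap-f}) now established, Theorem~\ref{th-solv1} applies and yields the existence of the (unique) minimizer $\lambda_{A,f}$, completing the proof.

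There is essentially no obstacle here: once the capacity bound is in place, the corollary is a one-line application of the theorem. The only point deserving a moment's attention is the passage from ``approximable in outer capacity by compacts'' to the explicit bound $c^*(A)<\infty$; everything else is bookkeeping with the standing assumptions. One could equally invoke \cite[Lemma~3.14]{Fu5}, according to which quasicompactness amounts to quasiclosedness together with finiteness of outer capacity, thereby reading off (\ref{cap-f}) at once.
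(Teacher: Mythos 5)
Your proof is correct and follows essentially the same route as the paper, which likewise deduces the corollary from Theorem~\ref{th-solv1} by noting that quasicompactness yields $(\mathcal P_1)$ via \cite[Theorem~3.9]{Z-Rarx} and the bound (\ref{cap-f}) via \cite[Definition~2.1]{F71}. Your subadditivity computation $c_*(A)\leqslant c^*(A)\leqslant c(K)+c^*(A\setminus K)<\infty$ merely spells out what the paper reads off directly from Fuglede's definition of quasicompactness.
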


\begin{proof}
This is obvious since for quasicompact $A$, both $(\mathcal P_1)$ and (\ref{cap-f}) hold true, by virtue of \cite[Theorem~3.9]{Z-Rarx} and \cite[Definition~2.1]{F71}, respectively.
\end{proof}

\begin{theorem}\label{th-solv2} For $\lambda_{A,f}$ to exist, it is sufficient that
\begin{equation}\label{ps-eq}
\hat{\omega}^A(\mathbb R^n)=1,
\end{equation}
$\hat{\omega}^A$ being the inner pseudo-balayage of $\omega$ onto $A$. Furthermore, then
\begin{equation*}\label{lolo}
\lambda_{A,f}=\hat{\omega}^A,\quad w_f(A)=\hat{w}_f(A),\quad c_{A,f}=0,
\end{equation*}
$c_{A,f}$ being the inner $f$-weighted equilibrium constant.
\end{theorem}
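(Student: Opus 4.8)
The plan is to show that the inner pseudo-balayage $\hat{\omega}^A$ itself is the sought minimizer, by verifying the characteristic condition of Theorem~\ref{th-ch2}. First I would observe that, under hypothesis (\ref{ps-eq}), the measure $\hat{\omega}^A$ belongs to the competing class $\breve{\mathcal E}^+(A)$: indeed $\hat{\omega}^A\in\mathcal E^+_f(A)\subset\mathcal E^+(A)$ by Theorem~\ref{th-ps1}, while its total mass equals $1$ by assumption. Thus $\hat{\omega}^A$ is admissible for Problem~\ref{pr-main}, and it suffices to confirm that it fulfils one of the two (equivalent) extremal inequalities of Theorem~\ref{th-ch2}.

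Next I would work with the $f$-weighted potential $U_f^{\hat{\omega}^A}=U^{\hat{\omega}^A}+f=U^{\hat{\omega}^A}-U^\omega$, translating the defining relations (\ref{def1}) and (\ref{def2}) of $\hat{\omega}^A$ into the language of Theorem~\ref{th-ch2}. Relation (\ref{def2'}) (equivalently (\ref{def2})) gives $\int U_f^{\hat{\omega}^A}\,d\hat{\omega}^A=\int U^{\hat{\omega}^A-\omega}\,d\hat{\omega}^A=0$, the integral being well defined and finite because $\hat{\omega}^A\in\mathcal E^+_f(A)$. On the other hand, (\ref{def1}) reads $U^{\hat{\omega}^A}\geqslant U^\omega$, i.e.\ $U_f^{\hat{\omega}^A}\geqslant0$, n.e.\ on $A$. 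Combining the two, I obtain $U_f^{\hat{\omega}^A}\geqslant0=\int U_f^{\hat{\omega}^A}\,d\hat{\omega}^A$ n.e.\ on $A$, which is precisely inequality (\ref{1}) with $\lambda:=\hat{\omega}^A$. By Theorem~\ref{th-ch2}, $\hat{\omega}^A$ is therefore the (unique) solution $\lambda_{A,f}$ to Problem~\ref{pr-main}.

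The remaining identities then follow at once. Since $\lambda_{A,f}=\hat{\omega}^A$, we get $w_f(A)=I_f(\lambda_{A,f})=I_f(\hat{\omega}^A)=\hat{w}_f(A)$, the last equality being the defining property of the pseudo-balayage (Definition~\ref{deff1}). Finally, formula (\ref{cc}) identifies the equilibrium constant as $c_{A,f}=\int U_f^{\lambda_{A,f}}\,d\lambda_{A,f}=\int U_f^{\hat{\omega}^A}\,d\hat{\omega}^A=0$, as computed above.

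I do not anticipate a genuine obstacle here: the argument is essentially a dictionary translation between the two variational characterizations, the only points needing care being the admissibility $\hat{\omega}^A\in\breve{\mathcal E}^+(A)$ (which uses (\ref{ps-eq}) in an essential way) and the finiteness of $\int U_f^{\hat{\omega}^A}\,d\hat{\omega}^A$, both of which are immediate from the membership $\hat{\omega}^A\in\mathcal E^+_f(A)$ established in Theorem~\ref{th-ps1}.
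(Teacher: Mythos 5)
Your proposal is correct, but it reaches the existence statement by a different route than the paper. The paper's own proof never touches the characteristic inequalities of Theorem~\ref{th-ch2} for the existence part: it simply observes that, under (\ref{ps-eq}), the minimizer $\hat{\omega}^A$ of $I_f$ over $\mathcal E^+_f(A)$ lies in the subclass $\breve{\mathcal E}^+(A)$ (via (\ref{breve})), so that $w_f(A)\leqslant I_f(\hat{\omega}^A)=\hat{w}_f(A)$, which combined with the opposite inequality (\ref{w}) forces $I_f(\hat{\omega}^A)=w_f(A)=\hat{w}_f(A)$; thus $\hat{\omega}^A=\lambda_{A,f}$ by a pure inclusion-of-classes argument, and Theorem~\ref{th-ch2} enters only at the very end, where (\ref{cc}) together with (\ref{def2'}) yields $c_{A,f}=0$. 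You instead verify the sufficient condition (\ref{1}) of Theorem~\ref{th-ch2} directly, translating the variational characterization of the pseudo-balayage from Theorem~\ref{th-ps1} ((\ref{def1}) giving $U_f^{\hat{\omega}^A}\geqslant0$ n.e.\ on $A$, and (\ref{def2'}) giving $\int U_f^{\hat{\omega}^A}\,d\hat{\omega}^A=0$) into the extremal inequality for Problem~\ref{pr-main}; the identities $w_f(A)=\hat{w}_f(A)$ and $c_{A,f}=0$ then drop out as corollaries. Both arguments are sound and short. The paper's comparison of infima is more elementary, needing nothing beyond the definition of $\hat{\omega}^A$ as a minimizer and the inclusion (\ref{breve}), and it delivers $w_f(A)=\hat{w}_f(A)$ at the outset; your version leans on the heavier machinery of Theorems~\ref{th-ps1}(b) and \ref{th-ch2}, but in exchange exhibits explicitly the potential-theoretic mechanism ($U_f^{\hat{\omega}^A}\geqslant0$ n.e.\ on $A$ with equality $\hat{\omega}^A$-a.e.) behind the coincidence $\lambda_{A,f}=\hat{\omega}^A$ and shows at once why the equilibrium constant vanishes. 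The two points you flag as needing care (admissibility from (\ref{ps-eq}) and finiteness of $\int U_f^{\hat{\omega}^A}\,d\hat{\omega}^A$ from $\hat{\omega}^A\in\mathcal E^+_f(A)$) are indeed exactly the right ones, and both are handled correctly.
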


$\bullet$ On account of Theorem~\ref{th-solv1}, in the rest of this section we assume that
\begin{equation}\label{cainf}
c_*(A)=\infty.
\end{equation}

$\bullet$ Unless $(\mathcal P_2)$ holds, assume additionally that
\begin{equation}\label{to}
 \lim_{|x|\to\infty,\ x\in\overline{A}}\,U^{\omega^-}(x)=0.
\end{equation}

\begin{theorem}\label{th-unsolv}Problem\/~{\rm\ref{pr-main}} is unsolvable whenever
\begin{equation}\label{sm}
\hat{\omega}^A(\mathbb R^n)<1.
\end{equation}
\end{theorem}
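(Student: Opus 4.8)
The plan is to establish, under hypothesis (\ref{sm}), that $w_f(A)=\hat{w}_f(A)$, and then to reach a contradiction through the uniqueness of the inner pseudo-balayage. Write $m:=\hat{\omega}^A(\mathbb{R}^n)$ and $\delta:=1-m$; by (\ref{sm}), $\delta>0$. Since $\breve{\mathcal E}^+(A)\subset\mathcal E^+_f(A)$ and $\hat{\omega}^A$ is the (unique) minimizer of $I_f$ over $\mathcal E^+_f(A)$ (Definition~\ref{deff1}, Lemma~\ref{l:unique}), one already has $\hat{w}_f(A)\leqslant w_f(A)$, cf.\ (\ref{w}). If I can produce a sequence $(\nu_k)\subset\breve{\mathcal E}^+(A)$ with $I_f(\nu_k)\to\hat{w}_f(A)$, then $w_f(A)\leqslant\hat{w}_f(A)$, whence $w_f(A)=\hat{w}_f(A)$; and then, assuming a solution $\lambda_{A,f}$ exists, it would satisfy $\lambda_{A,f}\in\mathcal E^+_f(A)$ and $I_f(\lambda_{A,f})=w_f(A)=\hat{w}_f(A)=\min_{\mu\in\mathcal E^+_f(A)}I_f(\mu)$, so that $\lambda_{A,f}$ is itself a minimizer of $I_f$ over $\mathcal E^+_f(A)$. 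By the uniqueness in Lemma~\ref{l:unique} this forces $\lambda_{A,f}=\hat{\omega}^A$, which is impossible because $\lambda_{A,f}(\mathbb{R}^n)=1$ while $\hat{\omega}^A(\mathbb{R}^n)=m<1$.

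The construction of $(\nu_k)$ is where (\ref{cainf}) enters. First I would observe that, by the countable subadditivity of inner capacity (Lemma~\ref{str-sub}), for every $r\in(0,\infty)$ we have $c_*(A)\leqslant c_*(A\cap\overline{B}_r)+c_*(A\setminus\overline{B}_r)$; since the bounded set $A\cap\overline{B}_r$ has finite capacity while $c_*(A)=\infty$, this yields $c_*(A\setminus\overline{B}_r)=\infty$. Hence, for each $k\in\mathbb{N}$ I may select a compact $K_k\subset A\cap\{|x|>k\}$ with $c(K_k)\to\infty$, and define $\sigma_k:=\frac{\delta}{c(K_k)}\gamma_{K_k}\in\mathcal E^+(A)$, where $\gamma_{K_k}$ is the capacitary measure on $K_k$, satisfying $\gamma_{K_k}(\mathbb{R}^n)=I(\gamma_{K_k})=c(K_k)$. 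Then $\sigma_k(\mathbb{R}^n)=\delta$ and $\|\sigma_k\|^2=\delta^2/c(K_k)\to0$, so $\nu_k:=\hat{\omega}^A+\sigma_k$ belongs to $\breve{\mathcal E}^+(A)$ (it has total mass $m+\delta=1$, finite energy, and is concentrated on $A$).

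It remains to verify $I_f(\nu_k)\to\hat{w}_f(A)$. Expanding the Gauss functional gives
\[
I_f(\hat{\omega}^A+\sigma_k)-I_f(\hat{\omega}^A)=2\!\int\!\bigl(U^{\hat{\omega}^A}-U^\omega\bigr)\,d\sigma_k+\|\sigma_k\|^2,
\]
and I must show the right-hand side tends to $0$. As $\|\sigma_k\|^2\to0$, the issue is the first term. By Cauchy--Schwarz, $\int U^{\hat{\omega}^A}\,d\sigma_k=\langle\hat{\omega}^A,\sigma_k\rangle\leqslant\|\hat{\omega}^A\|\,\|\sigma_k\|\to0$. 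In case $(\mathcal P_2)$ one argues directly: $0\leqslant\int(U^{\hat{\omega}^A}-U^\omega)\,d\sigma_k=\langle\hat{\omega}^A-\omega,\sigma_k\rangle\leqslant\|\hat{\omega}^A-\omega\|\,\|\sigma_k\|\to0$, the nonnegativity coming from (\ref{def1}) (valid $\sigma_k$-a.e., since n.e.-null subsets of $A$ are $\sigma_k$-negligible). In case $(\mathcal P_3)$ I would instead invoke (\ref{to}): because $S(\sigma_k)\subset\overline{A}\cap\{|x|>k\}$, we get $\int U^{\omega^-}\,d\sigma_k\leqslant\delta\sup_{\overline{A}\cap\{|x|>k\}}U^{\omega^-}\to0$; and from the domination $U^{\hat{\omega}^A}-U^{\omega^+}+U^{\omega^-}\geqslant0$ holding $\sigma_k$-a.e.\ (by (\ref{def1})) it follows that $0\leqslant\int U^{\omega^+}\,d\sigma_k\leqslant\int U^{\hat{\omega}^A}\,d\sigma_k+\int U^{\omega^-}\,d\sigma_k\to0$, so that $\int(U^{\hat{\omega}^A}-U^\omega)\,d\sigma_k\to0$ as well. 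In either case $I_f(\nu_k)\to\hat{w}_f(A)$, completing the approximation step.

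The main obstacle is precisely the estimate $\int U^{\omega^+}\,d\sigma_k\to0$ in case $(\mathcal P_3)$: here $\omega^+$ need not be of finite energy, so Cauchy--Schwarz is unavailable, and $U^{\omega^+}$ need not vanish along $\overline{A}$ at infinity. The key device resolving this is the domination inequality (\ref{def1}) satisfied by the pseudo-balayage, which controls $U^{\omega^+}$ (modulo $U^{\omega^-}$) by the manageable potential $U^{\hat{\omega}^A}$ on $S(\sigma_k)$; the hypothesis (\ref{to}) then disposes of the residual $U^{\omega^-}$-term. Everything else is routine, resting on the infinite capacity (\ref{cainf}) of the ``far'' parts of $A$ and the pre-Hilbert structure of $\mathcal E$.
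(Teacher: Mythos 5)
Your proof is correct, and its core construction is the same as the paper's: exploiting $c_*(A)=\infty$ to find compact sets $K_k\subset A\cap\{|x|>k\}$ of capacity tending to infinity, and perturbing $\hat{\omega}^A$ by the scaled capacitary measures $\sigma_k$ (the paper's $c_j\lambda_j$ with $c_j=1-\hat{\omega}^A(\mathbb R^n)$) to produce a sequence in $\breve{\mathcal E}^+(A)$ along which $I_f$ tends to $\hat{w}_f(A)$, the cross term being killed by Cauchy--Schwarz in case $(\mathcal P_2)$ and by (\ref{to}) in case $(\mathcal P_3)$. Where you genuinely diverge is the endgame. The paper concludes via the extremal-measure machinery of Section~\ref{sec-ext}: the minimizing sequence $(\mu_j)$ converges strongly and vaguely to $\xi_{A,f}$, the vague limit is identified with $\hat{\omega}^A$, and Lemma~\ref{ex-min} then rules out a solution since $\xi_{A,f}(\mathbb R^n)<1$. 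You instead argue directly: from $w_f(A)=\hat{w}_f(A)$, any solution $\lambda_{A,f}$ would minimize $I_f$ over all of $\mathcal E^+_f(A)$ (by (\ref{breve}) and Theorem~\ref{th-ps1}), hence coincide with $\hat{\omega}^A$ by Lemma~\ref{l:unique}, contradicting (\ref{sm}) on total masses. This is precisely the mechanism the paper itself uses at the end of the proof of Theorem~\ref{th3}, and it buys you a shorter proof that never needs $\xi_{A,f}$ or Lemma~\ref{ex-min}; what the paper's route buys in exchange is the extra structural information $\xi_{A,f}=\hat{\omega}^A$ (the minimizing nets still converge to a canonical object even though the problem is unsolvable), while both routes yield the equality $\hat{w}_f(A)=w_f(A)$ recorded in Remark~\ref{corWW}. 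One small remark: your ``main obstacle,'' the estimate $\int U^{\omega^+}\,d\sigma_k\to0$ in case $(\mathcal P_3)$, is self-imposed --- for the conclusion one only needs $\limsup_k I_f(\nu_k)\leqslant\hat{w}_f(A)$, and since the term $-2\int U^{\omega^+}\,d\sigma_k$ is nonpositive it can simply be discarded, which is exactly what the paper does in (\ref{stim1}); your two-sided argument via the domination (\ref{def1}) is nonetheless valid and gives the sharper statement $I_f(\nu_k)\to\hat{w}_f(A)$.
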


\begin{corollary}\label{th3-cor2}If\/ $\omega^+=0$, then Problem\/~{\rm\ref{pr-main}} is always unsolvable.\end{corollary}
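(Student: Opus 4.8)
The plan is to reduce the claim to Theorem~\ref{th-unsolv}, for which it suffices to verify that $\hat{\omega}^A(\mathbb R^n)<1$ whenever $\omega^+=0$. The standing conventions of this section already supply the hypotheses of Theorem~\ref{th-unsolv}: by (\ref{cainf}) we have $c_*(A)=\infty$, and unless $(\mathcal P_2)$ holds, (\ref{to}) is assumed as well. Moreover, the existence and uniqueness of $\hat{\omega}^A$, needed in order to speak meaningfully of its total mass in (\ref{sm}), are guaranteed by Theorem~\ref{th-ps1} and Lemma~\ref{l:unique}. Thus the entire task is to evaluate the total mass of the inner pseudo-balayage in the degenerate case $\omega^+=0$.

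First I would identify $\hat{\omega}^A$ explicitly. When $\omega^+=0$ we have $\omega=-\omega^-$, so the external field becomes $f=-U^\omega=U^{\omega^-}$, which is nonnegative everywhere since $\omega^-\in\mathfrak M^+$. Consequently the Gauss functional $I_f(\mu)=\|\mu\|^2+2\int U^{\omega^-}\,d\mu$ is a sum of two nonnegative terms for every $\mu\in\mathcal E^+_f(A)$, whence $I_f(\mu)\geqslant0=I_f(0)$. Because $0\in\mathcal E^+_f(A)$, the infimum $\hat{w}_f(A)$ introduced in (\ref{W}) equals $0$ and is attained at $\mu=0$. By the uniqueness of the minimizer (Lemma~\ref{l:unique}; cf.\ also the computation in Remark~\ref{rem3}), this forces $\hat{\omega}^A=0$.

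It then follows that $\hat{\omega}^A(\mathbb R^n)=0<1$, so Theorem~\ref{th-unsolv} applies and Problem~\ref{pr-main} is unsolvable, as claimed. I do not anticipate any genuine obstacle here: the only points requiring care are that $0$ indeed lies in $\mathcal E^+_f(A)$ (immediate, as $I_f(0)=0$) and that $\hat{\omega}^A$ is well defined, which is covered by the results just cited. As an alternative to the direct identification of $\hat{\omega}^A$, in case $(\mathcal P_3)$ one could instead invoke Corollary~\ref{cor-lambda}, whose bound $\hat{\omega}^A(\mathbb R^n)\leqslant C_{n,\alpha}\,\omega^+(\mathbb R^n)$ collapses to $\hat{\omega}^A(\mathbb R^n)\leqslant0$ when $\omega^+=0$; but the Gauss-functional argument is cleaner and treats cases $(\mathcal P_2)$ and $(\mathcal P_3)$ simultaneously, so I would present that one.
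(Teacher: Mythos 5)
Your proposal is correct and follows essentially the same route as the paper: the paper's proof also reduces to Theorem~\ref{th-unsolv} by observing (via Remark~\ref{rem3}) that for $\omega=-\omega^-$ the Gauss functional $\|\mu\|^2+2\int U^{\omega^-}\,d\mu\geqslant0$ is minimized by $\mu=0$, so $\hat{\omega}^A=0$ and (\ref{sm}) holds. Your explicit verification of the standing hypotheses and the uniqueness of the minimizer is just a more detailed write-up of the same argument.
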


\begin{proof} Indeed, if $\omega=-\omega^-$, then (\ref{sm}) is fulfilled, for $\hat{\omega}^A=0$ (see Remark~\ref{rem3}).
 \end{proof}

\begin{corollary}\label{th3-cor}
If $U^\omega$ is u.s.c.\ on $\overline{A}$, then Problem\/~{\rm\ref{pr-main}} is unsolvable whenever
\begin{equation*}\label{larger'}
\omega^+(\mathbb R^n)<1/C_{n,\alpha},\end{equation*}
$C_{n,\alpha}$ being introduced by {\rm(\ref{Lambda})}.
\end{corollary}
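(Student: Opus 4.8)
The plan is to obtain the corollary by composing two results already in hand: the total-mass estimate for the inner pseudo-balayage and the unsolvability criterion. First I would invoke Corollary~\ref{cor-lambda}. Since $U^\omega$ is assumed u.s.c.\ on $\overline{A}$, that corollary supplies the bound
\[
\hat{\omega}^A(\mathbb R^n)\leqslant C_{n,\alpha}\,\omega^+(\mathbb R^n),
\]
with $C_{n,\alpha}$ as in (\ref{Lambda}). This is the only place where the semicontinuity hypothesis is used, and all the analytic content (the Frostman/maximum-principle argument producing the constant $C_{n,\alpha}$, and the vague lower semicontinuity of $\mu\mapsto\mu(\mathbb R^n)$) has already been absorbed there.

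Next I would feed in the hypothesis $\omega^+(\mathbb R^n)<1/C_{n,\alpha}$. Since $C_{n,\alpha}\geqslant1>0$, multiplying through by $C_{n,\alpha}$ preserves the strict inequality, and chaining with the displayed estimate gives
\[
\hat{\omega}^A(\mathbb R^n)\leqslant C_{n,\alpha}\,\omega^+(\mathbb R^n)<C_{n,\alpha}\cdot\frac{1}{C_{n,\alpha}}=1,
\]
so that $\hat{\omega}^A(\mathbb R^n)<1$. This is precisely condition (\ref{sm}).

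Finally I would apply Theorem~\ref{th-unsolv}, whose hypothesis (\ref{sm}) is exactly the strict inequality $\hat{\omega}^A(\mathbb R^n)<1$ just established; it then follows at once that Problem~\ref{pr-main} is unsolvable. I do not anticipate any genuine obstacle: the corollary is a direct consequence of the two earlier results, and the standing requirements $(\mathcal P_1)$, (\ref{cap}), (\ref{cainf}) (together with (\ref{to}) when $(\mathcal P_2)$ fails) under which Theorem~\ref{th-unsolv} operates are already in force throughout this subsection. The only point meriting a word of care is the positivity $C_{n,\alpha}\geqslant1$, which guarantees that the passage from $\omega^+(\mathbb R^n)<1/C_{n,\alpha}$ to $\hat{\omega}^A(\mathbb R^n)<1$ keeps the inequality strict.
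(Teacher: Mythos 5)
Your proof is correct and is essentially identical to the paper's, which likewise obtains the corollary by combining the mass bound (\ref{tmo}) of Corollary~\ref{cor-lambda} (the only place the upper semicontinuity of $U^\omega$ on $\overline{A}$ is used) with the unsolvability criterion (\ref{sm}) of Theorem~\ref{th-unsolv}. Nothing further is needed; your remark on the strict inequality being preserved by $C_{n,\alpha}>0$ is the same trivial chaining the paper leaves implicit.
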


 \begin{proof} This follows from Theorem~\ref{th-unsolv} by use of (\ref{tmo}).
 \end{proof}

 \begin{theorem}\label{th3} Unless $(\mathcal P_2)$ holds, assume $U^\omega$ is continuous\footnote{When speaking of a
 continuous function, we understand that the values are {\it finite} numbers.} on $\overline{A}$ and
 \begin{equation}\label{too}
 \lim_{|x|\to\infty,\ x\in\overline{A}}\,U^{\omega^\pm}(x)=0.
\end{equation}
Then
\begin{equation*}\label{larger}
\lambda_{A,f}\text{\ exists}\iff\hat{\omega}^A(\mathbb R^n)\geqslant1.
\end{equation*}
If moreover
\begin{equation}\label{Larger}
\hat{\omega}^A(\mathbb R^n)>1,
\end{equation}
then\footnote{Compare with Theorem~\ref{th-solv2} as well as with Remark~\ref{corWW}.}
\begin{equation}\label{c0}
\lambda_{A,f}\ne\hat{\omega}^A,\quad w_f(A)\ne\hat{w}_f(A),\quad c_{A,f}\ne0.
\end{equation}
\end{theorem}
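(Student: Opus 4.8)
The plan is to prove the equivalence first and then the strict relations (\ref{c0}). For the equivalence, the implication ``$\lambda_{A,f}$ exists $\Rightarrow\hat{\omega}^A(\mathbb R^n)\geqslant1$'' is just the contrapositive of Theorem~\ref{th-unsolv}, whose hypothesis (\ref{to}) is implied by the present assumption (\ref{too}). For the reverse implication, the boundary case $\hat{\omega}^A(\mathbb R^n)=1$ is already covered by Theorem~\ref{th-solv2}, so the only genuinely new point is to construct the minimizer when $\hat{\omega}^A(\mathbb R^n)>1$; this construction is also where the strict surplus of mass will be exploited.

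To produce $\lambda_{A,f}$ under (\ref{Larger}), I would relax the mass constraint and minimize $I_f$ over the larger class $\mathcal K:=\{\mu\in\mathcal E^+(A):\ \mu(\mathbb R^n)\leqslant1\}$. Since bounded measures of $\mathcal E^+(A)$ lie in $\mathcal E^+_f(A)$ (automatically in case $(\mathcal P_2)$ by (\ref{reprr}), and by the remark following $(\mathcal P_3)$ otherwise), we have $\mathcal K\subset\mathcal E^+_f(A)$, whence $\inf_{\mathcal K}I_f\geqslant\hat{w}_f(A)>-\infty$ by (\ref{p2}). The class $\mathcal K$ is convex and, the mapping $\mu\mapsto\mu(\mathbb R^n)$ being vaguely l.s.c.\ and the strong topology on $\mathcal E^+$ being finer than the vague one, it is strongly closed, hence strongly complete by $(\mathcal P_1)$. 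By the parallelogram identity, exactly as in the derivation of (\ref{fund}), any minimizing sequence $(\mu_j)\subset\mathcal K$ satisfies $\|\mu_j-\mu_k\|^2\leqslant2I_f(\mu_j)+2I_f(\mu_k)-4\inf_{\mathcal K}I_f$, so it is strong Cauchy and converges both strongly and vaguely to some $\nu^*\in\mathcal K$.

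The main obstacle is to show that $\nu^*$ actually attains the infimum and has full mass one. Attainment amounts to $\int U^\omega\,d\mu_j\to\int U^\omega\,d\nu^*$ (strong convergence already giving $\|\mu_j\|\to\|\nu^*\|$). In case $(\mathcal P_2)$ this is immediate from $\langle\omega,\mu_j\rangle\to\langle\omega,\nu^*\rangle$; in case $(\mathcal P_3)$, where $\omega$ need not be of finite energy, it is precisely here that the decay hypothesis (\ref{too}) is indispensable: splitting the integral at a large sphere and using the continuity of $U^\omega$ on $\overline{A}$ together with $U^{\omega^\pm}(x)\to0$ as $|x|\to\infty$, one bounds the tail uniformly in $j$ (the masses being $\leqslant1$) and passes to the limit on the bounded part by vague convergence, so that $I_f(\nu^*)=\inf_{\mathcal K}I_f$. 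To pin the mass, suppose $\nu^*(\mathbb R^n)<1$ and set $\nu_t:=(1-t)\nu^*+t\hat{\omega}^A$; by (\ref{Larger}) the total mass strictly increases with $t$, hence $\nu_t\in\mathcal K$ for all small $t>0$, while convexity of $I_f$ gives $I_f(\nu_t)\leqslant(1-t)I_f(\nu^*)+tI_f(\hat{\omega}^A)<I_f(\nu^*)$, the last inequality being strict because $\hat{\omega}^A$ is the \emph{unique} minimizer of $I_f$ over $\mathcal E^+_f(A)\supset\mathcal K$ (Lemma~\ref{l:unique}, Theorem~\ref{th-ps1}) while $\nu^*\ne\hat{\omega}^A$ (the latter having mass $>1$, hence not lying in $\mathcal K$). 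This contradicts the minimality of $\nu^*$, so $\nu^*(\mathbb R^n)=1$, i.e.\ $\nu^*\in\breve{\mathcal E}^+(A)$; since $\breve{\mathcal E}^+(A)\subset\mathcal K$, the measure $\nu^*$ minimizes $I_f$ over $\breve{\mathcal E}^+(A)$ as well, and therefore $\nu^*=\lambda_{A,f}$.

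Finally, assume (\ref{Larger}) and prove (\ref{c0}). Since $\lambda_{A,f}(\mathbb R^n)=1<\hat{\omega}^A(\mathbb R^n)$, the two measures differ: $\lambda_{A,f}\ne\hat{\omega}^A$. As $\hat{\omega}^A$ is the unique minimizer of $I_f$ over $\mathcal E^+_f(A)$ and $\lambda_{A,f}\in\breve{\mathcal E}^+(A)\subset\mathcal E^+_f(A)$ is a different measure, we get $w_f(A)=I_f(\lambda_{A,f})>I_f(\hat{\omega}^A)=\hat{w}_f(A)$, giving $w_f(A)\ne\hat{w}_f(A)$. For $c_{A,f}\ne0$, argue by contradiction: if $c_{A,f}=0$, then combining (\ref{1}) and (\ref{2}) with (\ref{cc}) yields $U^{\lambda_{A,f}}\geqslant U^\omega$ n.e.\ on $A$ and $U^{\lambda_{A,f}}=U^\omega$ $\lambda_{A,f}$-a.e., which are exactly the defining relations (\ref{def1}) and (\ref{def2}) of the inner pseudo-balayage; by the uniqueness in Theorem~\ref{th-ps1}(b) this forces $\lambda_{A,f}=\hat{\omega}^A$, contradicting the mass inequality. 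Hence all three relations in (\ref{c0}) hold.
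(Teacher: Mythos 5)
Your overall strategy is sound and genuinely different from the paper's: instead of the paper's extremal-measure machinery (strong limits of the compact solutions $\lambda_{K,f}$, the variational inequalities of Lemma~\ref{l-extr}, and the key step $C_\xi\ne0$ forcing $\xi_{A,f}(\mathbb R^n)=1$), you relax the mass constraint and minimize over $\mathcal K:=\{\mu\in\mathcal E^+(A):\ \mu(\mathbb R^n)\leqslant1\}$. The forward implication via Theorem~\ref{th-unsolv}, the boundary case via Theorem~\ref{th-solv2}, and your treatment of (\ref{c0})~--- in particular deriving $c_{A,f}\ne0$ by contradiction from (\ref{1}), (\ref{2}), (\ref{cc}) and the uniqueness in Theorem~\ref{th-ps1}(b)~--- are all correct; the last is essentially the paper's argument run in reverse order. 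However, your mass-pinning step has a genuine gap. You mix $\nu_t:=(1-t)\nu^*+t\hat{\omega}^A$ and need $\nu_t\in\mathcal K$ for small $t>0$, which requires $\hat{\omega}^A(\mathbb R^n)<\infty$. In case $(\mathcal P_3)$ this is guaranteed by Corollary~\ref{cor-lambda}, but in case $(\mathcal P_2)$ the theorem imposes no upper semicontinuity on $U^\omega|_{\overline{A}}$ and no boundedness of $\omega^+$, Corollary~\ref{cor-lambda} is unavailable, and a measure of finite energy may have infinite total mass; hypothesis (\ref{Larger}) explicitly admits $\hat{\omega}^A(\mathbb R^n)=\infty$. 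In that situation every $\nu_t$ with $t>0$ has infinite mass and leaves $\mathcal K$, so your contradiction collapses exactly in the regime the theorem must cover.

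The gap is repairable within your framework, and the repair is instructive: if $\nu^*(\mathbb R^n)<1$, the constraint is inactive, so $\nu^*$ satisfies the \emph{unconstrained} first-order conditions. Indeed, for any compact $K\subset A$, any $\tau\in\mathcal E^+(K)$ and small $t>0$ one has $\nu^*+t\tau\in\mathcal K$, which yields (\ref{def1}) for $\nu^*$ exactly as in Step~1 of the paper's proof of Theorem~\ref{th-ps1}; the downward perturbations $\nu^*-t\nu^*|_Q$ then give (\ref{def2}). By the uniqueness in Theorem~\ref{th-ps1}(b), $\nu^*=\hat{\omega}^A$, contradicting $\nu^*(\mathbb R^n)<1<\hat{\omega}^A(\mathbb R^n)$~--- with no finiteness of mass needed (this is also, in essence, how the paper's proof sidesteps the issue: $C_\xi=0$ would force $\xi=\hat{\omega}^A$ by Theorem~\ref{th-ps1}, contradicting (\ref{Larger}) since $\xi(\mathbb R^n)\leqslant1$). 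One further small gloss: in case $(\mathcal P_3)$, passing to the limit on the bounded part after splitting at $S_r$ is not a direct consequence of vague convergence; you must choose $r$ with $\nu^*(S_r)=0$ (possible, $\nu^*$ being bounded) and invoke \cite[Theorem~0.5$'$]{L}, as the paper does in the proof of Lemma~\ref{l-extr0}.
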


\begin{corollary}\label{th3-cor'}Dropping assumption {\rm(\ref{cainf})} as well as all those imposed on $\omega$, consider signed $\omega\in\mathfrak M$, compactly supported in $\overline{A}^c$. Then $\lambda_{A,f}$ exists if and only if either $c_*(A)<\infty$, or $\hat{\omega}^A(\mathbb R^n)\geqslant1$. In particular, $\lambda_{A,f}$ does not exist if both $c_*(A)=\infty$ and $\omega^+(\mathbb R^n)<1/C_{n,\alpha}$ are fulfilled, $C_{n,\alpha}$ being introduced by {\rm(\ref{Lambda})}.\end{corollary}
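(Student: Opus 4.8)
The plan is to reduce the corollary to the theorems already proved, the only genuine preliminary being to verify that the single hypothesis ``$\omega$ compactly supported in $\overline{A}^c$'' automatically secures all the standing assumptions on $\omega$ used in Theorems~\ref{th-solv1}, \ref{th-unsolv}, and~\ref{th3}. Throughout, $A$ is still required to satisfy $(\mathcal P_1)$ and (\ref{cap}); only the temporary hypothesis (\ref{cainf}) and the conditions imposed on $\omega$ are dispensed with. First I would record that, $\overline{A}^c$ being open and $S(\omega)$ compact, the distance $\delta:=\inf_{(x,y)\in\overline{A}\times S(\omega)}|x-y|$ is strictly positive. Consequently $\omega^\pm$ are bounded, $U^\omega$ is continuous (indeed smooth) on $\overline{A}$ since $\overline{A}\subset\mathbb R^n\setminus S(\omega)$, and $M_A\leqslant\delta^{\alpha-n}|\omega|(\mathbb R^n)<\infty$ because $\alpha-n<0$; thus $(\mathcal P_3)$ holds, as already noted for measures compactly supported in $\overline{A}^c$, and the whole machinery of Sections~\ref{sec-pseudo}--\ref{sec-Gauss} is available. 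Since $\omega^\pm$ are compactly supported, $U^{\omega^\pm}(x)\to0$ as $|x|\to\infty$, so (\ref{too}) (hence a fortiori (\ref{to})) is fulfilled. In particular the pseudo-balayage $\hat{\omega}^A$ and the mass bound (\ref{tmo}) are at our disposal for every value of $c_*(A)$.

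With this in place I would split on the size of $c_*(A)$. If $c_*(A)<\infty$, Theorem~\ref{th-solv1} yields the existence of $\lambda_{A,f}$ outright. If instead $c_*(A)=\infty$, then (\ref{cainf}) holds and, $U^\omega$ being continuous on $\overline{A}$ with (\ref{too}) satisfied, Theorem~\ref{th3} applies verbatim and gives the equivalence that $\lambda_{A,f}$ exists if and only if $\hat{\omega}^A(\mathbb R^n)\geqslant1$. These two cases assemble into the asserted criterion by a direct logical check: writing $P$ for ``$c_*(A)<\infty$'' and $Q$ for ``$\hat{\omega}^A(\mathbb R^n)\geqslant1$'', the first case gives the implication $P\Rightarrow(\lambda_{A,f}\text{ exists})$, while under $\neg P$ the second case gives $(\lambda_{A,f}\text{ exists})\Leftrightarrow Q$; together these amount to exactly $(\lambda_{A,f}\text{ exists})\Leftrightarrow(P\vee Q)$, which is the stated equivalence.

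Finally, for the concluding ``in particular'' assertion I would argue by contraposition through this criterion. Assume $c_*(A)=\infty$ and $\omega^+(\mathbb R^n)<1/C_{n,\alpha}$. Then $P$ fails, and since $U^\omega$ is upper semicontinuous on $\overline{A}$, the mass estimate (\ref{tmo}) of Corollary~\ref{cor-lambda} gives
\[\hat{\omega}^A(\mathbb R^n)\leqslant C_{n,\alpha}\,\omega^+(\mathbb R^n)<1,\]
so $Q$ fails as well. Hence neither alternative of the criterion holds, and $\lambda_{A,f}$ does not exist, $C_{n,\alpha}$ being given by (\ref{Lambda}).

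I do not anticipate a deep obstacle: the corollary is an assembly of Theorems~\ref{th-solv1} and~\ref{th3} with the mass bound (\ref{tmo}), and the whole content lies in the opening verification that compact support in $\overline{A}^c$ upgrades into $(\mathcal P_3)$, the continuity of $U^\omega$ on $\overline{A}$, and the decay~(\ref{too}). The one point demanding care is that these three consequences hold \emph{uniformly}, which is precisely what the positivity of $\delta$ together with the compactness of $S(\omega)$ furnishes; once that is clear, the case-split and its logical combination are routine.
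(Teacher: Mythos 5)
Your proposal is correct and takes essentially the same route as the paper, whose proof is precisely the combination of Theorem~\ref{th-solv1} (existence when $c_*(A)<\infty$), Theorem~\ref{th3} (the equivalence with $\hat{\omega}^A(\mathbb R^n)\geqslant1$ when $c_*(A)=\infty$), and Corollary~\ref{th3-cor} (via the mass bound (\ref{tmo})). Your opening verification that compact support in $\overline{A}^c$ secures $(\mathcal P_3)$, the continuity of $U^\omega$ on $\overline{A}$, and the decay (\ref{too}) is exactly the hypothesis check the paper leaves implicit (cf.\ its footnote noting $(\mathcal P_3)$ for such $\omega$), and your logical assembly of the two cases is sound.
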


\begin{proof}This follows by combining Theorems~\ref{cap-f}, \ref{th3} and Corollary~\ref{th3-cor}.\end{proof}

\begin{remark}
Thus, if $\omega\in\mathfrak M$ is compactly supported in $\overline{A}^c$ while $c_*(A)=\infty$, then, according to Corollary~\ref{th3-cor'}, Problem~\ref{pr-main} is unsolvable whenever $\omega^+(\mathbb R^n)$ is small enough, whereas $\omega^-(\mathbb R^n)$, the total amount of the negative charge, has no influence on this phenomenon. (Note that, when appealing to the electrostatic interpretation of the problem, the fact just observed agrees with our physical intuition.)
\end{remark}

\subsection{On the description of the support $S(\lambda_{A,f})$}\label{sec-s-descr} The following Theorem~\ref{th3'} establishes sufficient conditions for the minimizer $\lambda_{A,f}$ to be of compact support, whereas Examples~\ref{ex3} and \ref{ex4} analyze their sharpness.

\begin{theorem}\label{th3'} Under the hypotheses of Theorem\/~{\rm\ref{th3}}, assume moreover that $A$ is not inner
$\alpha$-thin at infinity. Then $S(\lambda_{A,f})$ is compact whenever {\rm(\ref{Larger})} is fulfilled.
\end{theorem}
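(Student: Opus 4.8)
The plan is to derive the compactness of $S(\lambda_{A,f})$ from two ingredients: the strict negativity of the inner $f$-weighted equilibrium constant $c_{A,f}$, and the decay of $U^\omega$ along $\overline A$ near infinity. By Theorem~\ref{th3}, under the present hypotheses together with (\ref{Larger}) the minimizer $\lambda_{A,f}$ exists and (\ref{c0}) holds; in particular $w_f(A)\neq\hat{w}_f(A)$ and $c_{A,f}\neq0$.

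First I would prove $c_{A,f}<0$ by a value-function argument. Introduce the convex function
\[
W(t):=\inf\bigl\{I_f(\mu):\ \mu\in\mathcal E^+(A),\ \mu(\mathbb R^n)=t\bigr\},\qquad t\geqslant0,
\]
whose convexity is inherited from that of $I_f$ on the convex cone $\mathcal E^+(A)$. Since, by Definition~\ref{deff1}, $\hat{\omega}^A$ minimizes $I_f$ over all of $\mathcal E^+_f(A)$ and has mass $m:=\hat{\omega}^A(\mathbb R^n)>1$, the global minimum of $W$ equals $\hat{w}_f(A)=W(m)$, while $W(1)=w_f(A)>\hat{w}_f(A)$ by (\ref{c0}). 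As the minimum of the convex $W$ is attained at $m>1$ with $W(1)>W(m)$, its left derivative at $1$ satisfies $W'(1^-)<0$. On the other hand, testing with the admissible competitors $(1+s)\lambda_{A,f}\in\mathcal E^+(A)$ for $s\in(-1,0)$ and using the expansion $I_f\bigl((1+s)\lambda_{A,f}\bigr)=w_f(A)+2sc_{A,f}+s^2\|\lambda_{A,f}\|^2$, which follows from (\ref{cc}), one obtains $W'(1^-)\geqslant2c_{A,f}$. Combining the two gives $2c_{A,f}\leqslant W'(1^-)<0$, hence $c_{A,f}<0$.

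Next I would confine the support. When $U^\omega$ is u.s.c.\ on $\overline A$ (so that $f=-U^\omega$ is l.s.c.\ there, as in case $(\mathcal P_3)$), Remark~\ref{rem1''} yields $U_f^{\lambda_{A,f}}\leqslant c_{A,f}$ on $S(\lambda_{A,f})$, and $S(\lambda_{A,f})\subset\overline A$ because $\lambda_{A,f}$ is concentrated on $A$. For every $x\in\overline A$ one has $U^{\lambda_{A,f}}(x)\geqslant0$, whence $U_f^{\lambda_{A,f}}(x)=U^{\lambda_{A,f}}(x)-U^\omega(x)\geqslant-U^\omega(x)$. By (\ref{too}), $U^\omega(x)\to0$ as $|x|\to\infty$ with $x\in\overline A$, so there is $\rho\in(0,\infty)$ with $-U^\omega(x)>c_{A,f}$ for all $x\in\overline A$, $|x|>\rho$; hence $U_f^{\lambda_{A,f}}(x)>c_{A,f}$ there, and such $x$ cannot lie in $S(\lambda_{A,f})$. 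Thus $S(\lambda_{A,f})\subset\{|x|\leqslant\rho\}$ is compact. Note that, in this situation, the non-thinness hypothesis is not yet needed.

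The hard part will be the remaining case $(\mathcal P_2)$, i.e.\ $\omega\in\mathcal E$ of finite energy, for which neither the decay (\ref{too}) nor the upper semicontinuity of $U^\omega$ (required for the support inequality of Remark~\ref{rem1''}) is granted, so the pointwise exclusion argument above is unavailable. Here I expect the proof to replace pointwise control by capacity-theoretic control of $\lambda_{A,f}$ near infinity, and this is precisely where the extra assumption that $A$ is \emph{not} inner $\alpha$-thin at infinity should enter: it prevents the equilibrium mass from escaping to infinity and forces $U_f^{\lambda_{A,f}}$ to stay, along $A$, strictly above $c_{A,f}$ outside a bounded region, thereby localizing $S(\lambda_{A,f})$. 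Making this last step rigorous — quantitatively linking non-thinness at infinity to the behaviour of $U_f^{\lambda_{A,f}}$ for merely finite-energy $\omega$ — is the step I anticipate to be the main obstacle.
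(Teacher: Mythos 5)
Your proposal is correct where it is complete, but it takes a genuinely different route from the paper's. The paper argues by contradiction: if $S(\lambda_{A,f})$ were noncompact, Remark~\ref{rem1''} would supply points $x_j\in A\cap S(\lambda_{A,f})$ with $|x_j|\to\infty$ and $U_f^{\lambda_{A,f}}(x_j)=c_{A,f}$; by (\ref{too}) this forces $c_{A,f}\geqslant0$, hence $c_{A,f}>0$ by (\ref{c0}); then (\ref{1}) and (\ref{too}) give $\liminf_{|x|\to\infty,\ x\in A}U^{\lambda_{A,f}}(x)>0$, which contradicts \cite[Remark~4.12(i)]{KM} precisely because $A$ is not inner $\alpha$-thin at infinity. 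You instead determine the sign of $c_{A,f}$ \emph{a priori}, and your convexity argument is sound; it can even be streamlined: (\ref{1}), (\ref{cc}), and the fact that measures of $\mathcal E^+(A)$ do not charge sets of inner capacity zero yield the subgradient inequality $I_f(\nu)\geqslant w_f(A)+2c_{A,f}\bigl(\nu(\mathbb R^n)-1\bigr)$ for all $\nu\in\mathcal E^+_f(A)$, and testing with $\nu=\hat{\omega}^A$, then using $\hat w_f(A)<w_f(A)$ (from (\ref{w}) and (\ref{c0})) and $\hat{\omega}^A(\mathbb R^n)>1$, gives $c_{A,f}<0$ directly. This also covers the possibility $\hat{\omega}^A(\mathbb R^n)=\infty$, which your notation $W(m)$ tacitly excludes (in the regime where your second step applies, Corollary~\ref{cor-lambda} yields finiteness anyway when $\omega^+(\mathbb R^n)<\infty$). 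Your exclusion step is then exactly right: $U_f^{\lambda_{A,f}}\geqslant-U^\omega>c_{A,f}$ on $\overline A$ outside a sufficiently large ball, so Remark~\ref{rem1''} confines the support. What each approach buys: the paper's proof leans on the behavior of potentials at infinity along non-thin sets via \cite{KM}, whereas yours replaces this by convexity of the value function and, strikingly, never invokes the non-thinness hypothesis at all, so within the regime it covers it proves a formally stronger statement.

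Concerning the case $(\mathcal P_2)$, which you flag as the anticipated main obstacle: you should note that the paper's own proof has exactly the same scope as yours. As written, it appeals to Remark~\ref{rem1''} (which requires $f$ to be l.s.c.\ on $\overline A$) and to the decay (\ref{too}), i.e., it operates under the continuity/decay assumptions introduced in the bullet opening Section~\ref{sec-pr2}, which are imposed there only ``unless $(\mathcal P_2)$ holds.'' The case of $\omega\in\mathcal E$ without upper semicontinuity of $U^\omega$ on $\overline A$ and without (\ref{too}) is therefore not treated by the published argument either; your admitted gap coincides with what the paper leaves tacit, so relative to the paper your proposal is not missing any supplied ingredient.
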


Examples~\ref{ex3} and \ref{ex4} provide explicit formulae for $S(\lambda_{A,f})$ for some specific $A$ and $f$. The latter equality in (\ref{SS}) as well as both equalities in (\ref{SSS}) show that Theorem~\ref{th3'} would fail in general if $\hat{\omega}^A(\mathbb R^n)>1$ were replaced by $\hat{\omega}^A(\mathbb R^n)=1$.

\begin{example}\label{ex3}
For $A:=B_R^c:=\{|x|\geqslant R\}$, $R\in(0,\infty)$, and for the unit Dirac measure $\varepsilon_0$ at $x=0$, define
 \[\omega:=\varepsilon_0/q,\text{ \ where \ }q:=\hat{\varepsilon}_0^{B_R^c}(\mathbb R^n).\]
As shown in Example~\ref{ex1'}, $q=1$ if $\alpha\leqslant2$, and $q>1$ otherwise.
By (\ref{equq}),
\[\hat{\omega}^{B_R^c}=\hat{\varepsilon}^{B_R^c}_0/q,\] whence
\begin{equation*}\label{SS'}\hat{\omega}^{B_R^c}(\mathbb R^n)=\hat{\varepsilon}^{B_R^c}_0(\mathbb R^n)/q=1.\end{equation*}
Therefore, by virtue of Theorem~\ref{th-solv2}, the solution $\lambda_{B_R^c,f}$ to Problem~\ref{pr-main} with $f:=-U^\omega$ does exist, and moreover
\[\lambda_{B_R^c,f}=\hat{\omega}^{B_R^c}=\hat{\varepsilon}^{B_R^c}_0/q.\]
In view of what was pointed out in Examples~\ref{ex1'} and \ref{ex}, this gives
\begin{equation}\label{SS}S(\lambda_{B_R^c,f})=\left\{
\begin{array}{ll}S_R&\text{if $\alpha\geqslant2$},\\
B_R^c&\text{otherwise}.\\ \end{array} \right.
\end{equation}
\end{example}

 \begin{example}\label{ex4} Let $\overline{B}_{z,r}$ be the closed ball of radius $r$ centered at the point $z:=(0,\ldots,0,-r)$, and let $A_0$ denote the inverse of $\overline{B}_{z,r}\setminus\{0\}$ with respect to the unit sphere $S_1$ centered at the origin $0$. Then $A_0$ is a closed subset of $\mathbb R^n$, not $\alpha$-thin at infinity, such that $0\not\in A_0$, and whose boundary $\partial A_0$ is unbounded.

 In a manner similar to that in Example~\ref{ex1'}, we see that
 \begin{equation}\label{kel}
 \hat{\varepsilon}_0^{A_0}=\gamma_{\overline{B}_{z,r}}^*,
 \end{equation}
 $\gamma_{\overline{B}_{z,r}}^*$ being the Kelvin transform of $\gamma_{\overline{B}_{z,r}}$, the capacitary measure on $\overline{B}_{z,r}$, with respect to $S_1$. Noting that $\gamma_{\overline{B}_{z,r}}^*(\mathbb R^n)=U^{\gamma_{\overline{B}_{z,r}}}(0)$ (cf.\ (\ref{K4''})), whereas $U^{\gamma_{\overline{B}_{z,r}}}(0)=1$, we obtain
 \begin{equation}\label{tmnon}
\hat{\varepsilon}_0^{A_0}(\mathbb R^n)=1\text{ \ for any $\alpha\in(0,n)$}
 \end{equation}
 (compare with (\ref{dr''})). Applying Theorem~\ref{th-solv2} we therefore infer that the solution $\lambda_{A_0,f}$ to Problem~\ref{pr-main} with $A:=A_0$ and $f:=-U^{\varepsilon_0}$ does exist, and moreover
 \begin{equation}\label{sol}
 \lambda_{A_0,f}=\hat{\varepsilon}_0^{A_0}.
 \end{equation}
 By virtue of the description of $S(\gamma_{\overline{B}_{z,r}})$ \cite[Section~II.3.13]{L}, (\ref{kel}) and (\ref{sol}) yield
 \begin{equation}\label{SSS}S(\lambda_{A_0,f})=\left\{
\begin{array}{cl}\partial A_0&\text{if $\alpha\geqslant2$},\\
A_0&\text{otherwise}.\\ \end{array} \right.
\end{equation}
 \end{example}

 \subsection{Remark}\label{rem-comp} The results thereby obtained improve substantially many recent ones from \cite{Dr0,Z-Rarx}, by strengthening their formulations and/or by extending the areas of their applications. For instance, \cite[Corollary~2.6]{Dr0} only deals with closed sets $A$ that are not thin at infinity, and with external fields $f$ of the form $-U^\omega$, where $\omega:=c\varepsilon_{x_0}$, $c\in(0,\infty)$,
 $\varepsilon_{x_0}$ being the unit Dirac measure at $x_0\not\in A$. However, even for these very particular $A$ and $\omega$, all the assertions in \cite[Corollary~2.6]{Dr0} are in general weaker than the relevant ones, established above. This is caused, in particular, by the fact that those assertions from \cite{Dr0} are given in terms of $\omega(\mathbb R^n)$, whereas ours~--- in terms of $\hat{\omega}^A(\mathbb R^n)$ (see Section~\ref{nice} for the relations between these two values).

 Regarding the advantages of our current approach in comparison with that suggested in \cite{Z-Rarx}, see Remark~\ref{compared} above.

\section{Proof of Theorem~\ref{th-solv2}}\label{secprr1}

Due to condition (\ref{ps-eq}),  the inner pseudo-balayage $\hat{\omega}^A$, minimizing $I_f(\mu)$ over the class
$\mathcal E^+_f(A)$, actually belongs to its proper subclass $\breve{\mathcal E}^+(A)$, see (\ref{breve}). Therefore,
\[w_f(A)\leqslant I_f(\hat{\omega}^A)=\hat{w}_f(A),\]
which combined with (\ref{w}) gives
\[I_f(\hat{\omega}^A)=w_f(A)=\hat{w}_f(A).\]
Thus $\hat{\omega}^A$ serves as the (unique) solution to Problem~\ref{pr-main}, i.e.\ $\hat{\omega}^A=\lambda_{A,f}$.
Substituting this equality into (\ref{def2'}), we get
\[
\int U_f^{\lambda_{A,f}}\,d\lambda_{A,f}=\int\bigl(U^{\hat{\omega}^A}-U^\omega\bigr)\,d\hat{\omega}^A=0,
\]
which according to Theorem~\ref{th-ch2} establishes the remaining relation $c_{A,f}=0$.

\section{Proofs of Theorems~\ref{th-solv1} and \ref{th-unsolv}}\label{sec-pr}

\subsection{Extremal measures}\label{sec-ext} Let $\mathbb M_f(A)$ stand for the (nonempty) set of all nets $(\mu_s)_{s\in
S}\subset\breve{\mathcal E}^+(A)$ having the property
\begin{equation}\label{min}
\lim_{s\in S}\,I_f(\mu_s)=w_f(A);
\end{equation}
those nets $(\mu_s)_{s\in S}$ are said to be {\it minimizing} (in Problem~\ref{pr-main}).
Using the finiteness of $w_f(A)$ (Lemma~\ref{wfin}), the convexity of $\breve{\mathcal E}^+(A)$, and the perfectness of the
$\alpha$-Riesz kernel, one can see with the aid of arguments similar to those in \cite[Lemma~4.2]{Z-Rarx} that there exists the
unique $\xi_{A,f}\in\mathcal E^+$ such that, for every $(\mu_s)_{s\in S}\in\mathbb M_f(A)$,
\begin{equation}\label{ext}
 \mu_s\to\xi_{A,f}\text{ \ strongly and vaguely in $\mathcal E^+$ (as $s$ ranges through $S$)}.
\end{equation}
This $\xi:=\xi_{A,f}$ will be referred to as {\it the extremal measure} (in Problem~\ref{pr-main}).

Due to (\ref{ext}), we have
\begin{equation}\label{tin}\xi_{A,f}\in\mathcal E^+(A),\end{equation}
$\mathcal E^+(A)$ being strongly closed by $(\mathcal P_1)$, and moreover
\begin{equation}\label{tm}
 \xi_{A,f}(\mathbb R^n)\leqslant1,
\end{equation}
the mapping $\mu\mapsto\mu(\mathbb R^n)$ being vaguely l.s.c.\ on $\mathfrak M^+$ \cite[Section~IV.1, Proposition~4]{B2}.

The following simple observation is crucial to the proofs given below.

\begin{lemma}\label{ex-min}
Problem\/~{\rm\ref{pr-main}} is solvable if and only if
\begin{equation}\label{nands}
 \xi_{A,f}(\mathbb R^n)=1\text{ \ and \ }I_f(\xi_{A,f})=w_f(A),
\end{equation}
and in the affirmative case
\begin{equation}\label{nandss}\lambda_{A,f}=\xi_{A,f}.\end{equation}
\end{lemma}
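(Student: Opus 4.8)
The plan is to read off the equivalence directly from the defining property of the extremal measure $\xi_{A,f}$, namely that every minimizing net converges to it both strongly and vaguely (relation (\ref{ext})), combined with the fact that both the strong topology on $\mathcal E$ and the vague topology on $\mathfrak M$ are Hausdorff.

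For the necessity part I would assume that Problem~\ref{pr-main} is solvable, with (unique) solution $\lambda_{A,f}\in\breve{\mathcal E}^+(A)$, so that $I_f(\lambda_{A,f})=w_f(A)$. The key observation is that the constant net equal to $\lambda_{A,f}$, indexed by an arbitrary directed set $S$, is then minimizing, for its $I_f$-values are constantly $w_f(A)$, whence it belongs to $\mathbb M_f(A)$. By (\ref{ext}) this net converges strongly and vaguely to $\xi_{A,f}$; but a constant net converges to its own value, so by the uniqueness of limits in the (Hausdorff) strong and vague topologies the two limits must coincide, giving $\lambda_{A,f}=\xi_{A,f}$. This yields at once (\ref{nandss}) together with $\xi_{A,f}(\mathbb R^n)=\lambda_{A,f}(\mathbb R^n)=1$ and $I_f(\xi_{A,f})=I_f(\lambda_{A,f})=w_f(A)$, i.e.\ (\ref{nands}).

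For the sufficiency part I would assume (\ref{nands}). By (\ref{tin}) we already know $\xi_{A,f}\in\mathcal E^+(A)$, and the hypothesis $\xi_{A,f}(\mathbb R^n)=1$ then places $\xi_{A,f}$ in $\breve{\mathcal E}^+(A)$. Since $I_f(\xi_{A,f})=w_f(A)=\inf_{\mu\in\breve{\mathcal E}^+(A)}I_f(\mu)$, the measure $\xi_{A,f}$ attains the infimum over the admissible class, hence solves Problem~\ref{pr-main}; by the uniqueness of the solution noted before Theorem~\ref{th-ch2}, $\lambda_{A,f}=\xi_{A,f}$, which re-establishes (\ref{nandss}).

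Since both directions amount to unwinding the definition of $\xi_{A,f}$, I do not anticipate a serious obstacle. The only point requiring care is the necessity direction: one must justify that a solution generates a genuine minimizing net converging to itself, which rests on the triviality that a constant net converges to its value and on the Hausdorff separation of the strong and vague topologies. Everything else is a direct reading of the inequalities defining $w_f(A)$ and of the membership condition $\mu(\mathbb R^n)=1$ cutting out $\breve{\mathcal E}^+(A)$.
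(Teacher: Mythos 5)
Your proposal is correct and follows essentially the same route as the paper's own proof: sufficiency is read off from (\ref{tin}) together with the definition of $w_f(A)$, and necessity uses exactly the paper's device of observing that the trivial (constant) net $(\lambda_{A,f})$ is minimizing, so that (\ref{ext}) and the Hausdorff property of the strong topology force $\lambda_{A,f}=\xi_{A,f}$.
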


\begin{proof}The "if" part is evident by (\ref{tin}), whereas the opposite is implied by the fact that the trivial net
$(\lambda_{A,f})$ is obviously minimizing, and hence converges strongly to both $\lambda_{A,f}$ and $\xi_{A,f}$. Since the
strong topology on $\mathcal E$ is Hausdorff, (\ref{nandss}) follows.\end{proof}

\subsection{Proof of Theorem~\ref{th-solv1}} Fix a minimizing sequence $(\mu_j)\in\mathbb M_f(A)$; by (\ref{ext}),
\begin{equation*}\label{extmu}
 \mu_j\to\xi\text{ \ strongly and vaguely in $\mathcal E^+$ (as $j\to\infty$)},
\end{equation*}
$\xi:=\xi_{A,f}$ being the (unique) extremal measure in Problem~\ref{pr-main}, whence
\begin{equation}\label{minbound}
\sup_{j\in\mathbb N}\,\|\mu_j\|<\infty.
\end{equation}
To show that this $\xi$ serves as the solution to Problem~\ref{pr-main}, it is enough to verify (\ref{nands}).

Since $\mu_j\to\xi$ vaguely, applying \cite[Section~IV.1, Proposition~4]{B2} gives
\begin{equation}\label{11'}
\xi(\mathbb R^n)\leqslant\liminf_{j\to\infty}\,\mu_j(\mathbb R^n)=1,
\end{equation}
whereas \cite[Section~IV.4.4, Corollary~3]{B2} yields
\begin{equation}\label{upper}\int1_K\,d\xi\geqslant\limsup_{j\to\infty}\,\int1_K\,d\mu_j\text{ \ for every compact
$K\subset\mathbb R^n$},\end{equation}
the indicator function $1_K$ of $K$ being bounded, of compact support, and u.s.c.\ on $\mathbb R^n$. Combining (\ref{11'})
and (\ref{upper}) with
\[\xi(\mathbb R^n)=\lim_{K\uparrow\mathbb R^n}\,\int1_K\,d\xi,\]
we get
\[1\geqslant\xi(\mathbb R^n)\geqslant\limsup_{(j,K)\in\mathbb N\times\mathfrak C}\,\int1_K\,d\mu_j=
1-\liminf_{(j,K)\in\mathbb N\times\mathfrak C}\,\int1_{A\setminus K}\,d\mu_j,\]
$\mathbb N\times\mathfrak C$ being the directed product of the directed sets $\mathbb N$ and
$\mathfrak C:=\mathfrak C_{\mathbb R^n}$ \cite[p.~68]{K}. The former relation in (\ref{nands}) will therefore follow once we
establish the equality
\begin{equation}\label{0}
  \liminf_{(j,K)\in\mathbb N\times\mathfrak C}\,\int1_{A\setminus K}\,d\mu_j=0.
\end{equation}

By \cite[Theorem~2.6]{L} applied to $A\setminus K$, $K\in\mathfrak C$ being arbitrarily chosen, there exists the (unique) inner
capacitary measure $\gamma_{A\setminus K}$, minimizing the energy $\|\mu\|^2$ over the (convex) set $\Gamma_{A\setminus K}$
consisting of all $\mu\in\mathcal E^+$ with
\[U^\mu\geqslant1\text{ \  n.e.\ on $A\setminus K$.}\]
For any $K'\in\mathfrak C$ such that $K\subset K'$, we have $\Gamma_{A\setminus K}\subset\Gamma_{A\setminus K'}$, and
\cite[Lemma~2.2]{L} therefore gives
\begin{equation}\label{str-ca}\|\gamma_{A\setminus K}-\gamma_{A\setminus K'}\|^2\leqslant\|\gamma_{A\setminus
K}\|^2-\|\gamma_{A\setminus K'}\|^2.\end{equation}
Since $\|\gamma_{A\setminus K}\|^2=c_*(A\setminus K)$ \cite[Theorem~2.6]{L}, $\|\gamma_{A\setminus K}\|^2$ decreases as $K$
ranges through $\mathfrak C$, which together with (\ref{str-ca}) implies that the net $(\gamma_{A\setminus K})_{K\in\mathfrak
C}\subset\mathcal E^+$ is Cauchy in the strong topology on $\mathcal E^+$. Noting that $(\gamma_{A\setminus K})_{K\in\mathfrak
C}$ converges vaguely to zero,\footnote{Indeed, for any given $\varphi\in C_0(\mathbb R^n)$, there exists an open, relatively compact
set $G\subset\mathbb R^n$ such that $\varphi(x)=0$ for all $x\not\in\overline{G}$. Hence, $\gamma_{A\setminus
K}(\varphi)=0$ for all $K\in\mathfrak C$ with $K\supset\overline{G}$, and the claim follows.} we get
\begin{equation}\label{str-conv}
  \gamma_{A\setminus K}\to0\text{ \ strongly in $\mathcal E^+$ as $K\uparrow\mathbb R^n$,}
\end{equation}
the $\alpha$-Riesz kernel being perfect.

It follows from the above that
\begin{equation}\label{ii}U^{\gamma_{A\setminus K}}\geqslant1_{A\setminus K}\text{ \ n.e.\ on $A\setminus K$},\end{equation}
and, therefore, $\mu_j$-a.e.\ for all $j\in\mathbb N$. Integrating (\ref{ii}) with respect to $\mu_j$ we obtain, by the Cauchy--Schwarz
(Bunyakovski) inequality,
\[\int1_{A\setminus K}\,d\mu_j\leqslant\int U^{\gamma_{A\setminus K}}\,d\mu_j\leqslant\|\gamma_{A\setminus
K}\|\cdot\|\mu_j\|\text{ \ for all $K\in\mathfrak C$ and $j\in\mathbb N$}.\]
Combined with (\ref{minbound}) and (\ref{str-conv}), this gives (\ref{0}), hence $\xi\in\breve{\mathcal E}^+(A)$, and consequently
\begin{equation}\label{nandsl'}
 w_f(A)\leqslant I_f(\xi).
\end{equation}

To complete the proof of the theorem, it remains to verify the latter relation in (\ref{nands}), which in view of
(\ref{min}) and (\ref{nandsl'}) is reduced to the inequality
\begin{equation}\label{nandsl}
 I_f(\xi)\leqslant\lim_{j\to\infty}\,I_f(\mu_j).\end{equation}
 In case $(\mathcal P_2)$, (\ref{nandsl}) follows at once from the strong convergence of $(\mu_j)$ to $\xi$, by applying
 (\ref{repr}) to each of $\mu_j$ and $\xi$. Otherwise, case $(\mathcal P_3)$ holds, and hence $f=-U^{\omega}$ is l.s.c.\ and
 bounded on $\overline{A}$.
 Thus there is $c\in(0,\infty)$ such that $f':=f+c\geqslant0$ on $\overline{A}$, and \cite[Section~IV.1, Proposition~4]{B2}
 applied to $f'$ gives
 \[\int f\,d\xi+c=\int f'\,d\xi\leqslant\liminf_{j\to\infty}\,\int f'\,d\mu_j=\liminf_{j\to\infty}\,\int f\,d\mu_j+c,\]
 the first and last equalities being valid by virtue of $\xi(\mathbb R^n)=\mu_j(\mathbb R^n)=1$. Therefore,
 \[\int f\,d\xi\leqslant\lim_{j\to\infty}\,\int f\,d\mu_j.\]
 Multiplied by $2$, and then added to
 \[\lim_{j\to\infty}\,\|\mu_j\|^2=\|\xi\|^2,\]
 this results in (\ref{nandsl}), thereby completing the whole proof.

 \begin{remark}\label{th-ext}
   A slight generalization of the above proof shows that Theorem~\ref{th-solv1} and, hence, Corollary~\ref{cor-qu} remain valid
   for an external field $f$ represented as the sum
   \[f:=u+U^\vartheta,\]
   where $\vartheta\in\mathcal E$, while $u:\overline{A}\to(-\infty,\infty]$ is l.s.c., bounded from below, and such that
   \[c_*(\{x\in A:\ u(x)<\infty\})>0.\]
 \end{remark}

\subsection{Proof of Theorem~\ref{th-unsolv}} As $c_*(A)=\infty$ by (\ref{cainf}), there are mutually nonintersecting, compact sets
$K_j\subset A$, $j\in\mathbb N$, such that $|x|\geqslant j$ for all $x\in K_j$ and $c(K_j)\geqslant j$. If
$\lambda_j:=\gamma_{K_j}/c(K_j)\in\breve{\mathcal E}^+(K_j)$ denotes the normalized capacitary measure on $K_j$, then
\begin{gather}\label{31}
\|\lambda_j\|\to0\text{ \ as $j\to\infty$},\\
\label{32}\lambda_j\to0\text{ \ vaguely in $\mathcal E^+$ as $j\to\infty$},
\end{gather}
the latter being implied by the fact that for any compact subset $K$ of $\mathbb R^n$, we have $K\cap S(\lambda_j)=\varnothing$ for all $j$
large enough.
Define
\begin{equation}\label{nnu}
\mu_j:=\hat{\omega}^A+c_j\lambda_j,\text{ \ where \ $c_j:=1-\hat{\omega}^A(\mathbb R^n)$.}
\end{equation}
Noting from (\ref{sm}) that
\begin{equation}\label{est2}
0<c_j\leqslant1\text{ \ for all $j$,}
\end{equation}
we get $\mu_j\in\breve{\mathcal E}^+(A)$ for all $j$, whence
\begin{equation}\label{th3-1}
 w_f(A)\leqslant\liminf_{j\to\infty}\,I_f(\mu_j).
\end{equation}

On the other hand, $I_f(\hat{\omega}^A)=\hat{w}_f(A)$ (see Definition~\ref{deff1} and Theorem~\ref{th-ps1}). By means of a straightforward verification, we derive
from (\ref{31}), (\ref{nnu}), and (\ref{est2}) that
\begin{equation}\label{stim1}\limsup_{j\to\infty}\,I_f(\mu_j)\leqslant\limsup_{j\to\infty}\,\Bigl(I_f(\hat{\omega}^A)+2c_j\int
U^{\omega^-}\,d\lambda_j\Bigr)\leqslant\hat{w}_f(A)+2L_\infty,
\end{equation}
where
\begin{equation*}L_\infty:=\limsup_{j\to\infty}\,\int U^{\omega^-}\,d\lambda_j.
\end{equation*}

Let first $(\mathcal P_2)$ take place. Applying the Cauchy--Schwarz inequality to the measures $\omega^-,\lambda_j\in\mathcal
E^+$, and then letting $j\to\infty$, we infer from (\ref{31}) that
\begin{equation}\label{stim2}
 L_\infty=0.
\end{equation}
 Otherwise, $(\mathcal P_3)$ and hence (\ref{to}) must be fulfilled, which again results in (\ref{stim2}), $\lambda_j$ being the
 unit measure supported by $\overline{A}\cap\{|x|\geqslant j\}$.
 Substituting (\ref{stim2}) into (\ref{stim1}), and then combining the inequality thus obtained with (\ref{th3-1}) and (\ref{w}), we get
 \begin{equation}\label{WW}
 \lim_{j\to\infty}\,I_f(\mu_j)=\hat{w}_f(A)=w_f(A),
 \end{equation}
  which shows that the sequence $(\mu_j)$ is, in fact, minimizing in Problem~\ref{pr-main},
 and hence converges both strongly and vaguely to the extremal measure $\xi_{A,f}$:
 \begin{equation*}\label{convext}
  \mu_j\to\xi_{A,f}\text{ \ strongly and vaguely in $\mathcal E^+$ as $j\to\infty$}.
 \end{equation*}
 On account of (\ref{32})--(\ref{est2}), this  gives
 \[\hat{\omega}^A=\xi_{A,f},\]
 the vague topology on $\mathfrak M$ being Hausdorff. Therefore, by (\ref{sm}),
\[\xi_{A,f}(\mathbb R^n)=\hat{\omega}^A(\mathbb R^n)<1,\]
and an application of Lemma~\ref{ex-min} shows that Problem~\ref{pr-main} is indeed unsolvable.

\begin{remark}\label{corWW}
As shown in (\ref{WW}), under the assumptions of Theorem~\ref{th-unsolv}, equality prevails in (\ref{w}), i.e.
\[\hat{w}_f(A)=w_f(A)\]
(compare with Theorems~\ref{th-solv2} and \ref{th3}).
\end{remark}

\section{Proofs of Theorems~\ref{th3} and \ref{th3'}}\label{sec-pr2}

\subsection{Auxiliary results} According to Corollary~\ref{cor-qu}, for every $K\in\mathfrak C_A$ such that $K\geqslant K_0$,
where $c(K_0)>0$, there is the (unique) solution $\lambda_{K,f}$ to Problem~\ref{pr-main} with $A:=K$, whereas by virtue of
Lemma~\ref{l-cont} below, those $\lambda_{K,f}$ form a minimizing net:
\begin{equation}\label{Lmin}(\lambda_{K,f})_{K\geqslant K_0}\in\mathbb M_f(A).\end{equation}

\begin{lemma}\label{l-cont}$w_f(K)\downarrow w_f(A)$ as $K\uparrow A$.
\end{lemma}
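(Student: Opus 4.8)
The plan is to prove the two inequalities $\lim_{K\uparrow A}w_f(K)\geqslant w_f(A)$ and $\lim_{K\uparrow A}w_f(K)\leqslant w_f(A)$ separately, the first being nearly immediate and the second carrying the analytic content. The symbol $\downarrow$ already encodes both monotonicity and the value of the limit, so these are exactly the two things to check.

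First, the monotonicity. For compact $K\subset K'\subset A$ one has the inclusions $\breve{\mathcal E}^+(K)\subset\breve{\mathcal E}^+(K')\subset\breve{\mathcal E}^+(A)$, so that passing to the infimum of $I_f$ over a larger class only decreases its value; hence $w_f(K)\geqslant w_f(K')\geqslant w_f(A)$. Thus the net $\bigl(w_f(K)\bigr)_{K\geqslant K_0}$ is decreasing and bounded below by $w_f(A)$, which is finite by Lemma~\ref{wfin}; consequently $\lim_{K\uparrow A}w_f(K)$ exists and is $\geqslant w_f(A)$. (Here $K$ is taken to follow a fixed $K_0$ with $c(K_0)>0$, so that each $\breve{\mathcal E}^+(K)$ is nonempty and $w_f(K)$ finite.)

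For the reverse inequality I would fix an arbitrary $\mu\in\breve{\mathcal E}^+(A)$~--- note $I_f(\mu)$ is finite by (\ref{breve})~--- and exhaust it by its normalized restrictions. Since $\mu$ is concentrated on $A$, one has $\mu(K)\uparrow\mu(A)=1$ as $K\uparrow A$, so for $K$ large enough $\mu(K)>0$ and $\nu_K:=\mu|_K/\mu(K)$ belongs to $\breve{\mathcal E}^+(K)$; in particular $w_f(K)\leqslant I_f(\nu_K)$. The key step is then to show $I_f(\nu_K)\to I_f(\mu)$. Writing $I_f(\nu_K)=\mu(K)^{-2}\|\mu|_K\|^2-2\mu(K)^{-1}\int_K U^\omega\,d\mu$, it suffices to establish $\|\mu|_K\|^2\to\|\mu\|^2$ and $\int_K U^\omega\,d\mu\to\int U^\omega\,d\mu$ and to recall $\mu(K)\to1$. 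Both convergences follow from \cite[Lemma~1.2.2]{F1}, exactly as in the proof of Theorem~\ref{conv=ps}: applying it to the positive, l.s.c., $\mu$-integrable potentials $U^{\omega^+}$ and $U^{\omega^-}$ gives $\int_K U^{\omega^\pm}\,d\mu\uparrow\int U^{\omega^\pm}\,d\mu$ (these limits being finite under $(\mathcal P_2)$ or $(\mathcal P_3)$), whence the field integral converges; while applying it to the positive, l.s.c., $(\mu\otimes\mu)$-integrable kernel $\kappa_\alpha$ on $A\times A$, along the cofinal family $\{K\times K\}$ of compact subsets of $A\times A$, gives $\|\mu|_K\|^2=\int_{K\times K}\kappa_\alpha\,d(\mu\otimes\mu)\uparrow\|\mu\|^2$.

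Combining these, $\lim_{K\uparrow A}w_f(K)\leqslant\lim_{K\uparrow A}I_f(\nu_K)=I_f(\mu)$, and since $\mu\in\breve{\mathcal E}^+(A)$ was arbitrary, taking the infimum over such $\mu$ yields $\lim_{K\uparrow A}w_f(K)\leqslant w_f(A)$. Together with the monotonicity bound this gives the asserted equality. The main obstacle is the technical justification of the two monotone-limit statements for the energy and the field integral under $K\uparrow A$; once the exhaustion lemma \cite[Lemma~1.2.2]{F1} is invoked for $\kappa_\alpha$, $U^{\omega^+}$ and $U^{\omega^-}$~--- precisely the device already used for (\ref{lll})~--- the normalization factor $\mu(K)\to1$ is harmless and the remainder is routine.
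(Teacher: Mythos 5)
Your proposal is correct and takes essentially the same route as the paper's proof: monotonicity of the infima gives $\lim_{K\uparrow A}w_f(K)\geqslant w_f(A)$, and the reverse inequality is obtained, exactly as in the paper, by exhausting an arbitrary $\mu\in\breve{\mathcal E}^+(A)$ via the normalized restrictions $\nu_K:=\mu|_K/\mu(K)\in\breve{\mathcal E}^+(K)$ and applying \cite[Lemma~1.2.2]{F1} to the positive, l.s.c., $\mu$-integrable functions $\kappa_\alpha$, $U^{\omega^+}$, and $U^{\omega^-}$ to get $I_f(\nu_K)\to I_f(\mu)$. You merely make explicit some details the paper leaves implicit (the product-space/cofinality argument for the energy term and the harmlessness of the factor $\mu(K)\to1$), which is sound.
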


\begin{proof}
For any $\mu\in\breve{\mathcal E}^+(A)$, $\mu(K)\uparrow1$ as $K\uparrow A$. Applying \cite[Lemma~1.2.2]{F1} to each of the
(positive, l.s.c., $\mu$-integrable) functions $\kappa_\alpha$, $U^{\omega^+}$, and $U^{\omega^-}$, we therefore get
\begin{equation*}
I_f(\mu)=\lim_{K\uparrow A}\,I_f(\mu|_K)=\lim_{K\uparrow A}\,I_f(\nu_K)\geqslant\lim_{K\uparrow A}\,w_f(K),
\end{equation*}
where
\[\nu_K:=\mu|_K/\mu(K)\in\breve{\mathcal E}^+(K),\] $K\in\mathfrak C_A$ being large enough. Letting now $\mu$ range over
$\breve{\mathcal E}^+(A)$ gives
\[w_f(A)\geqslant\lim_{K\uparrow A}\,w_f(K),\]
whence the lemma, the opposite being obvious by the monotonicity.
\end{proof}

$\bullet$ Unless case $(\mathcal P_2)$ takes place, assume in what follows that the external field $f=-U^\omega$
is continuous on $\overline{A}$, and that (\ref{too}) is fulfilled.

\begin{lemma}\label{l-extr0}For the extremal measure $\xi_{A,f}$, we have
\begin{equation}\label{extw}
I_f(\xi_{A,f})=w_f(A).
\end{equation}
\end{lemma}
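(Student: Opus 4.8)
The plan is to show that the extremal measure $\xi_{A,f}$ attains the infimum $w_f(A)$ by proving the lower semicontinuity inequality $I_f(\xi_{A,f})\leqslant\lim_{K\uparrow A}I_f(\lambda_{K,f})=w_f(A)$, since the reverse inequality is immediate once we know $\xi_{A,f}\in\breve{\mathcal E}^+(A)$, or can otherwise be extracted from the minimizing net. By (\ref{Lmin}), the net $(\lambda_{K,f})_{K\geqslant K_0}$ is minimizing, so by the defining property (\ref{ext}) of the extremal measure it converges both strongly and vaguely to $\xi_{A,f}$. The strong convergence $\lambda_{K,f}\to\xi_{A,f}$ immediately gives $\|\lambda_{K,f}\|^2\to\|\xi_{A,f}\|^2$, so the energy part of the functional passes to the limit. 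The whole difficulty is thus concentrated in controlling the linear term $\int f\,d\lambda_{K,f}=-\int U^\omega\,d\lambda_{K,f}$.

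First I would handle case $(\mathcal P_2)$, where $\omega\in\mathcal E$: here the representation (\ref{repr}) gives $I_f(\mu)=\|\omega-\mu\|^2-\|\omega\|^2$ for all $\mu\in\mathcal E^+(A)$, so $I_f$ is strongly continuous on $\mathcal E^+(A)$, and strong convergence of $(\lambda_{K,f})$ to $\xi_{A,f}$ yields (\ref{extw}) at once. The substantive case is $(\mathcal P_3)$ (together with the standing continuity of $U^\omega$ on $\overline{A}$ and the decay hypothesis (\ref{too})). There I would split $f=-U^{\omega^+}+U^{\omega^-}$ and treat the two potentials separately. For the positive part $U^{\omega^+}$, which is bounded and u.s.c.\ on $\overline{A}$, the vague convergence together with $\lambda_{K,f}(\mathbb R^n)=1$ should give an inequality in the favorable direction via \cite[Section~IV.4.4, Corollary~3]{B2} (exactly as in the Proof of Theorem~\ref{th-solv1}, after shifting by a constant to make the integrand nonnegative).

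The main obstacle will be the negative part $U^{\omega^-}$, whose integral $\int U^{\omega^-}\,d\lambda_{K,f}$ must be shown to behave correctly under the vague limit despite the noncompactness of $A$: mass of $\lambda_{K,f}$ could in principle escape to infinity, and vague lower semicontinuity alone does not control such escaping mass for the negative contribution. This is precisely where hypothesis (\ref{too}), namely $U^{\omega^-}(x)\to0$ as $|x|\to\infty$ in $\overline{A}$, becomes essential: it forces the part of $\int U^{\omega^-}\,d\lambda_{K,f}$ carried by mass near infinity to be negligible, so that no energy leaks into the linear term. Concretely, I would establish that $\xi_{A,f}\in\breve{\mathcal E}^+(A)$ (i.e.\ $\xi_{A,f}(\mathbb R^n)=1$, so no mass is lost in the limit) by the very same inner capacitary-measure argument used in the Proof of Theorem~\ref{th-solv1} — building $\gamma_{A\setminus K}$, using (\ref{str-conv}) and the Cauchy--Schwarz inequality to show $\liminf_{(K,K')}\int 1_{A\setminus K'}\,d\lambda_{K,f}=0$ — and then combine this tightness with the decay (\ref{too}) to pass the $U^{\omega^-}$-integral to the limit. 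Assembling the energy limit, the u.s.c.\ bound on the $U^{\omega^+}$-term, and the controlled limit of the $U^{\omega^-}$-term yields $I_f(\xi_{A,f})\leqslant w_f(A)$, and since $\xi_{A,f}\in\breve{\mathcal E}^+(A)$ gives $I_f(\xi_{A,f})\geqslant w_f(A)$, equality (\ref{extw}) follows.
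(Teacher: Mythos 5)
Your treatment of case $(\mathcal P_2)$ coincides with the paper's, but in case $(\mathcal P_3)$ your plan has a genuine gap: you propose to prove $\xi_{A,f}\in\breve{\mathcal E}^+(A)$, i.e.\ $\xi_{A,f}(\mathbb R^n)=1$, via the inner capacitary tightness argument from the proof of Theorem~\ref{th-solv1}, and to deduce the inequality $I_f(\xi_{A,f})\geqslant w_f(A)$ from that. This cannot work, for two reasons. First, Lemma~\ref{l-extr0} lives under the standing assumption (\ref{cainf}), $c_*(A)=\infty$, so by subadditivity $c_*(A\setminus K)=\infty$ for every compact $K$, and the capacitary measures $\gamma_{A\setminus K}$ with finite energy, on which that tightness argument rests, do not exist. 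Second, and more fundamentally, the conclusion you aim for is \emph{false} in general in this setting: when $\hat{\omega}^A(\mathbb R^n)<1$, the proof of Theorem~\ref{th-unsolv} shows that $\xi_{A,f}=\hat{\omega}^A$, so $\xi_{A,f}(\mathbb R^n)<1$ and mass genuinely escapes to infinity --- yet Lemma~\ref{l-extr0} still holds there (cf.\ (\ref{WW})). Indeed, the whole point of the extremal-measure machinery is that $\xi_{A,f}(\mathbb R^n)=1$ holds \emph{if and only if} Problem~\ref{pr-main} is solvable (Lemma~\ref{ex-min}, Corollary~\ref{cpr-extr}); if it held always, Theorem~\ref{th-unsolv} would be vacuous. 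Relatedly, your one-sided u.s.c.\ bound on the $U^{\omega^+}$-term can only deliver $I_f(\xi_{A,f})\leqslant w_f(A)$, and the reverse inequality is not automatic for a sub-probability measure, since the unconstrained minimum $\hat{w}_f(A)$ may be strictly below $w_f(A)$ (see (\ref{c0})).

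The paper's proof avoids mass conservation entirely by proving the \emph{exact} limit $\lim_{K\uparrow A}\int f\,d\lambda_{K,f}=\int f\,d\xi_{A,f}$, and for this it uses the full strength of the standing hypotheses in case $(\mathcal P_3)$: not merely the decay of $U^{\omega^-}$ (that is (\ref{to}), the weaker hypothesis used for Theorem~\ref{th-unsolv}), but (\ref{too}), the decay of \emph{both} $U^{\omega^\pm}$ along $\overline{A}$, together with the continuity of $f$ on $\overline{A}$. Given $t>0$, one picks $r$ with $|f|<t/2$ on $A\cap\overline{B}_r^c$ and $\xi_{A,f}(S_r)=0$; then the tail contributions are at most $t$ uniformly in $K$ (using $\lambda_{K,f}(\mathbb R^n)=1$ and (\ref{tm})) regardless of whether mass escapes, while on $\overline{B}_r$ the restricted measures converge vaguely and $f$ is continuous, so the integrals converge. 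Adding twice this limit to $\|\lambda_{K,f}\|^2\to\|\xi_{A,f}\|^2$ gives $I_f(\lambda_{K,f})\to I_f(\xi_{A,f})$, and combining with $w_f(K)=I_f(\lambda_{K,f})\downarrow w_f(A)$ (Lemma~\ref{l-cont}) yields the two-sided equality (\ref{extw}) at once. So the decay hypothesis (\ref{too}) is not, as your proposal suggests, a substitute for tightness in controlling the negative part; it is what renders the escaping mass invisible to the linear term altogether, for both signs of the charge.
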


\begin{proof} By virtue of (\ref{ext}) and (\ref{Lmin}),
\begin{equation*}\label{extK}
\lambda_{K,f}\to\xi_{A,f}\text{ \ strongly and vaguely in $\mathcal E^+$ as $K\uparrow A$},
\end{equation*}
hence
\begin{equation}\label{extK1}
\lim_{K\uparrow A}\,\|\lambda_{K,f}\|^2=\|\xi_{A,f}\|^2.\end{equation}
If case $(\mathcal P_2)$ takes place, then the strong convergence of $(\lambda_{K,f})_{K\geqslant K_0}$ to $\xi_{A,f}$ yields,
by applying (\ref{repr}) to each of $\lambda_{K,f}$ and $\xi_{A,f}$,
\begin{equation}\label{extK2}\lim_{K\uparrow A}\,I_f(\lambda_{K,f})=I_f(\xi_{A,f}),\end{equation}
whence, by Lemma~\ref{l-cont},
\[I_f(\xi_{A,f})=\lim_{K\uparrow A}\,w_f(K)=w_f(A).\]

In the remaining case $(\mathcal P_3)$, for any $t>0$ choose $r$ so that
\[|f|<t/2\text{ \ on $A\cap\overline{B}_r^c$,}\]
which is possible in view of (\ref{too}). On account of (\ref{tm}),
\begin{equation}\label{integ1}
 \biggl|\int_{\overline{B}_r^c}\,f\,d(\lambda_{K,f}-\xi_{A,f})\biggr|<t\text{ \ for all $K\geqslant K_0$}.
\end{equation}
The above $r$ can certainly be chosen so that
\begin{equation*}\label{norm}\xi_{A,f}(S_r)=0,\end{equation*}
the measure $\xi_{A,f}$ being bounded. Then, according to \cite[Theorem~0.5$'$]{L},
\[\lambda_{K,f}|_{\overline{B}_r}\to\xi_{A,f}|_{\overline{B}_r}\text{ \ vaguely in $\mathfrak M^+$ as $K\uparrow A$},\]
whence, by the continuity of $f$ on $\overline{A}$,
\[
 \lim_{K\uparrow A}\,\int f\,d\lambda_{K,f}|_{\overline{B}_r}=
 \int f\,d\xi_{A,f}|_{\overline{B}_r},
\]
which combined with (\ref{integ1}), taken for $t>0$ arbitrarily small, results in\footnote{In case $(\mathcal P_2)$, (\ref{integ2}) holds true as well, which is obtained by subtracting (\ref{extK1}) from (\ref{extK2}).
Alternatively, it can be derived from the strong convergence of the net $(\lambda_{K,f})_{K\geqslant K_0}$ to
$\xi_{A,f}$, by applying the Cauchy--Schwarz inequality to the measures $\omega,\lambda_{K,f}-\xi_{A,f}\in\mathcal E$.}
\begin{equation}\label{integ2}
\lim_{K\uparrow A}\,\int f\,d\lambda_{K,f}=\int f\,d\xi_{A,f}.
\end{equation}
Multiplied by $2$, and then added to (\ref{extK1}), this yields (\ref{extK2}), whence (\ref{extw}), again by making use of
Lemma~\ref{l-cont}.
\end{proof}

\begin{corollary}\label{cpr-extr}$\lambda_{A,f}$ exists if and only if $\xi_{A,f}(\mathbb R^n)=1$,
and in the affirmative case $\lambda_{A,f}=\xi_{A,f}$.
\end{corollary}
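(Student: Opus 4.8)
The plan is to read the corollary straight off the two results just assembled, namely Lemma~\ref{ex-min} and Lemma~\ref{l-extr0}. Recall that Lemma~\ref{ex-min} characterizes solvability of Problem~\ref{pr-main} by the conjunction of the two requirements in (\ref{nands}): the mass condition $\xi_{A,f}(\mathbb R^n)=1$ together with the energy identity $I_f(\xi_{A,f})=w_f(A)$; and in the affirmative case it already delivers the identification (\ref{nandss}), i.e.\ $\lambda_{A,f}=\xi_{A,f}$.

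The key observation is that, under the standing hypotheses of the present subsection (continuity of $f=-U^\omega$ on $\overline{A}$ together with (\ref{too}), unless case $(\mathcal P_2)$ prevails), the energy identity is no longer a genuine restriction: Lemma~\ref{l-extr0} asserts precisely that $I_f(\xi_{A,f})=w_f(A)$ holds unconditionally. Feeding this into (\ref{nands}) discharges the second requirement, so that the criterion of Lemma~\ref{ex-min} collapses to the statement that Problem~\ref{pr-main} is solvable if and only if the single mass condition $\xi_{A,f}(\mathbb R^n)=1$ is met. Combining this with (\ref{nandss}) then gives $\lambda_{A,f}=\xi_{A,f}$ whenever the problem is solvable, which is exactly the assertion of the corollary.

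There is no genuine obstacle at this stage, since the substantive analytic work---establishing the lower-semicontinuity-type passage to the limit that forces $I_f(\xi_{A,f})=w_f(A)$ for the extremal measure---has already been carried out in the proof of Lemma~\ref{l-extr0}, separately in the cases $(\mathcal P_2)$ and $(\mathcal P_3)$. The only point worth flagging is bookkeeping: one must confirm that Lemma~\ref{l-extr0} is available under exactly the hypotheses imposed here, which it is, as both results rest on the same standing assumptions of Section~\ref{sec-pr2}. Hence the proof reduces to a one-line combination of the two lemmas.
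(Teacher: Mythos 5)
Your proposal is correct and coincides with the paper's own proof, which reads: ``This follows by combining Lemmas~\ref{ex-min} and \ref{l-extr0}.'' You have simply spelled out the same one-line combination—Lemma~\ref{l-extr0} discharges the energy identity in (\ref{nands}) unconditionally under the standing hypotheses, so the criterion of Lemma~\ref{ex-min} reduces to the mass condition, with (\ref{nandss}) giving the identification $\lambda_{A,f}=\xi_{A,f}$.
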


\begin{proof}
  This follows by combining Lemmas~\ref{ex-min} and \ref{l-extr0}.
\end{proof}

\begin{lemma}\label{l-extr}For the extremal measure $\xi=\xi_{A,f}$, we have
\begin{align}\label{eq-extr1}
 U_f^\xi&\geqslant C_\xi\text{ \ n.e.\ on $A$},\\
 U_f^\xi&\leqslant C_\xi\text{ \ on $S(\xi)$},\label{eq-extr2}
\end{align}
where
\begin{equation}\label{Cxi}
C_\xi:=\int U_f^\xi\,d\xi\in(-\infty,\infty).
\end{equation}
\end{lemma}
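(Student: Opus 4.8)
The plan is to obtain both inequalities by transferring, to the strong-and-vague limit, the characteristic properties that each compact-case equilibrium measure $\lambda_{K,f}$ already enjoys. Recall from (\ref{Lmin}) and (\ref{ext}) that $\lambda_{K,f}\to\xi$ strongly and vaguely in $\mathcal E^+$ as $K\uparrow A$, and that, by Theorem~\ref{th-ch2} applied with $A:=K$, each $\lambda_{K,f}$ satisfies $U_f^{\lambda_{K,f}}\geqslant c_{K,f}$ n.e.\ on $K$ together with $U_f^{\lambda_{K,f}}\leqslant c_{K,f}$ $\lambda_{K,f}$-a.e., where $c_{K,f}=\int U_f^{\lambda_{K,f}}\,d\lambda_{K,f}=\|\lambda_{K,f}\|^2+\int f\,d\lambda_{K,f}$. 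The first preliminary step is to show that the constants converge, $c_{K,f}\to C_\xi$: the strong convergence gives $\|\lambda_{K,f}\|^2\to\|\xi\|^2$ (cf.\ (\ref{extK1})) and the field convergence $\int f\,d\lambda_{K,f}\to\int f\,d\xi$ is already established in (\ref{integ2}), so that $c_{K,f}\to\|\xi\|^2+\int f\,d\xi=\int U_f^\xi\,d\xi=C_\xi$.

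For (\ref{eq-extr1}) I would fix an arbitrary compact $K_*\subset A$ and pass to a subsequence of the net $(\lambda_{K,f})$, say $(\lambda_{K_j,f})$ with $K_j\supset K_*$, along which, by the strong convergence and \cite[p.~166, Remark]{F1}, $U^{\lambda_{K_j,f}}\to U^\xi$ n.e.\ on $\mathbb R^n$; adding the fixed, n.e.\ finite function $f$ yields $U_f^{\lambda_{K_j,f}}\to U_f^\xi$ n.e.\ on $K_*$. Since $U_f^{\lambda_{K_j,f}}\geqslant c_{K_j,f}$ n.e.\ on $K_*$ while $c_{K_j,f}\to C_\xi$, the strengthened countable subadditivity of inner capacity (Lemma~\ref{str-sub}) gives $U_f^\xi\geqslant C_\xi$ n.e.\ on $K_*$, hence, exhausting $A$, n.e.\ on all of $A$. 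This argument uses only strong convergence and the fixedness of $f$, so it is valid in both cases $(\mathcal P_2)$ and $(\mathcal P_3)$.

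For (\ref{eq-extr2}) in case $(\mathcal P_3)$, where $f$ is continuous on $\overline A$, I would note that then $U_f^{\lambda_{K,f}}=U^{\lambda_{K,f}}+f$ is l.s.c., so by the argument of Remark~\ref{rem1''} the upper bound improves to $U_f^{\lambda_{K,f}}\leqslant c_{K,f}$ everywhere on $S(\lambda_{K,f})$. Given $x\in S(\xi)$, vague convergence supplies points $x_K\in S(\lambda_{K,f})$ with $x_K\to x$, and then the joint lower semicontinuity of $(x,\mu)\mapsto U^\mu(x)$ \cite[Lemma~2.2.1(b)]{F1}, the continuity of $f$, and $c_{K,f}\to C_\xi$ combine to give $U_f^\xi(x)\leqslant\liminf_K U_f^{\lambda_{K,f}}(x_K)\leqslant C_\xi$. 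Equivalently, since $U_f^\xi$ is here l.s.c., once the $\xi$-a.e.\ bound is known the set $\{U_f^\xi>C_\xi\}$ is open and $\xi$-null, hence disjoint from $S(\xi)$.

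The step I expect to be the main obstacle is (\ref{eq-extr2}) in case $(\mathcal P_2)$: there $f=-U^\omega$ with $\omega$ signed, so $U_f^\xi$ carries no semicontinuity and Remark~\ref{rem1''} is unavailable, whence neither the support-point approximation nor the open-set argument applies verbatim. Here I would first prove the stronger $\xi$-a.e.\ equality $U_f^\xi=C_\xi$ directly, exploiting that $\xi$ minimizes $I_f$ not merely over $\breve{\mathcal E}^+(A)$ but over the larger convex set $\{\mu\in\mathcal E^+(A):\mu(\mathbb R^n)\leqslant1\}$, since under $c_*(A)=\infty$ any residual mass can be carried off to infinity at vanishing cost (the escape-to-infinity device from the proof of Theorem~\ref{th-unsolv}, using the Cauchy--Schwarz inequality in case $(\mathcal P_2)$ and the decay hypothesis (\ref{too}) in case $(\mathcal P_3)$). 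Convex minimization then yields $\int U_f^\xi\,d(\mu-\xi)\geqslant0$ for all such $\mu$; testing with measures that add or remove mass, and splitting via (\ref{tm}) and Lemma~\ref{ex-min} according to whether $\xi(\mathbb R^n)=1$ or $\xi(\mathbb R^n)<1$ (in the latter case the mass constraint is inactive, forcing $C_\xi=0$), forces $U_f^\xi=C_\xi$ $\xi$-a.e., i.e.\ (\ref{eq-extr2}) holds $\xi$-a.e. Upgrading this to \emph{every} point of $S(\xi)$ without semicontinuity is the genuinely delicate part, which I would attempt through a finer approximation of the support points by the strongly convergent net $(\lambda_{K,f})$, anticipating that this is where the real work lies.
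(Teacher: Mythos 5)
Your treatment of (\ref{eq-extr1}), and of the constants, is exactly the paper's proof: the same convergence $c_{K,f}\to C_\xi$ obtained by combining (\ref{extK1}) with (\ref{integ2}), the same extraction of a subsequence $(\lambda_{K_j,f})$ with $K_j\supset K_*$ along which $U^{\lambda_{K_j,f}}\to U^\xi$ n.e.\ (via \cite[p.~166, Remark]{F1}), and the same appeal to countable subadditivity of inner capacity to pass to the limit in $U_f^{\lambda_{K_j,f}}\geqslant c_{K_j,f}$ n.e.\ on $K_*$. Your proof of (\ref{eq-extr2}) under continuity of $f$ on $\overline A$ is also verbatim the paper's: the support-wise bound (\ref{2K}) coming from Theorem~\ref{th-ch2} together with Remark~\ref{rem1''}, approximation of a point $x\in S(\xi)$ by points $x_{j_k}\in S(\lambda_{K_{j_k},f})$ supplied by vague convergence, and then the joint lower semicontinuity of $(x,\mu)\mapsto U^\mu(x)$ combined with $c_{K,f}\to C_\xi$.

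The genuine gap is case $(\mathcal P_2)$ of (\ref{eq-extr2}), which you explicitly leave open. The lemma asserts $U_f^\xi\leqslant C_\xi$ at \emph{every} point of $S(\xi)$, and the standing hypotheses of this section include plain $(\mathcal P_2)$ (no continuity of $f$ is then assumed); your alternative route~--- minimization over the sub-probability cone $\{\mu\in\mathcal E^+(A):\mu(\mathbb R^n)\leqslant1\}$ via the escape-to-infinity device, with the case split $\xi(\mathbb R^n)<1$ (forcing $\xi=\hat{\omega}^A$ and $C_\xi=0$ through Theorem~\ref{th-ps1}) versus $\xi(\mathbb R^n)=1$ (where Lemma~\ref{ex-min} and Theorem~\ref{th-ch2} apply)~--- is plausible as far as it goes, but it only yields $U_f^\xi\leqslant C_\xi$ $\xi$-a.e., and you yourself flag the pointwise upgrade as unresolved. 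So the proposal does not prove the lemma as stated. It is worth recording two things about the comparison. First, the paper does not carry out any of this extra machinery: its proof runs the single support-approximation argument through both cases, and at the two places where semicontinuity enters~--- (\ref{2K}), which rests on Remark~\ref{rem1''} and hence on $f$ being l.s.c., and the final limit step, which invokes the continuity of $f$ on $\overline A$~--- it is using exactly the regularity that the section's standing convention guarantees only when $(\mathcal P_2)$ fails; your diagnosis that semicontinuity is the crux there is therefore accurate, but the paper supplies no separate $(\mathcal P_2)$ argument of the kind you were searching for. Second, the $\xi$-a.e.\ version you can reach would in fact suffice for the lemma's only application, since the proof of Theorem~\ref{th3} uses (\ref{eq-extr1})--(\ref{eq-extr2}) solely to derive (\ref{Cxieq}), i.e.\ $U_f^\xi=C_\xi$ $\xi$-a.e.; but as a proof of the statement actually posed, with (\ref{eq-extr2}) holding on all of $S(\xi)$, your attempt is incomplete at precisely the step you identified.
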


\begin{proof}In case $(\mathcal P_3)$, the finiteness of $\int U_f^\xi\,d\xi$ follows from the boundedness of $U^{\omega^\pm}$ on $\overline{A}$, the extremal measure $\xi$ being bounded by (\ref{tm}); while in case $(\mathcal P_2)$, it is obvious.

By Theorem~\ref{th-ch2} applied to each $K\in\mathfrak C_A$ large enough,
\begin{align}U_f^{\lambda_{K,f}}&\geqslant c_{K,f}\text{ \ n.e.\ on $K$},\label{1K}\\
U_f^{\lambda_{K,f}}&\leqslant c_{K,f}\text{ \ on $S(\lambda_{K,f})$},\label{2K}
\end{align}
where
\begin{equation*}\label{CK}
c_{K,f}=\int U_f^{\lambda_{K,f}}\,d\lambda_{K,f}.
\end{equation*}
Furthermore, by combining (\ref{extK1}) with (\ref{integ2}),
\begin{equation}\label{lll0}
\lim_{K\uparrow A}\,c_{K,f}=C_\xi,
\end{equation}
$C_\xi$ being the (finite) constant appearing in (\ref{Cxi}).

Fix $K_*\in\mathfrak C_A$. The strong topology on $\mathcal E^+$ being first-countable, one can choose a subsequence
$(\lambda_{K_j,f})_{j\in\mathbb N}$ of the net $(\lambda_{K,f})_{K\in\mathfrak C_A}$ such that
\begin{equation}\label{J0}
 \lambda_{K_j,f}\to\xi\text{ \ strongly (hence vaguely) in $\mathcal E^+$ as $j\to\infty$.}
\end{equation}
There is certainly no loss of generality in assuming that
\[K_*\subset K_j\text{ \ for all $j$,}\]
for if not, we replace $K_j$ by $K_j':=K_j\cup K_*$; then, by the monotonicity of $\bigl(w_f(K)\bigr)$, the sequence
$(\lambda_{K_j',f})_{j\in\mathbb N}$ remains minimizing, and hence also converges strongly to $\xi$.

Due to the arbitrary choice of $K_*\in\mathfrak C_A$, (\ref{eq-extr1}) will follow once we show that
\begin{equation}\label{JJ'}U^\xi_f\geqslant C_\xi\text{ \ n.e.\ on $K_*$}.\end{equation}
Passing if necessary to a subsequence and changing the notations, we conclude from (\ref{J0}), by virtue of \cite[p.~166,
Remark]{F1}, that
\begin{equation}\label{JJ0}U^\xi=\lim_{j\to\infty}\,U^{\lambda_{K_j,f}}\text{ \ n.e.\ on $\mathbb R^n$}.\end{equation}
Applying now (\ref{1K}) to each $K_j$, and then letting $j\to\infty$, on account of (\ref{lll0}) and (\ref{JJ0}) we arrive at
(\ref{JJ'}). (Here the countable subadditivity of inner capacity on Borel sets has been utilized.)

Since $(\lambda_{K_j,f})$ converges to $\xi$ vaguely, see (\ref{J0}), for every $x\in S(\xi)$ there exist a subsequence
$(K_{j_k})$ of the sequence $(K_j)$ and points $x_{j_k}\in S(\lambda_{K_{j_k},f})$, $k\in\mathbb N$, such that $x_{j_k}$ approach $x$ as
$k\to\infty$. Thus, according to (\ref{2K}),
\[U_f^{\lambda_{K_{j_k},f}}(x_{j_k})\leqslant\int U^{\lambda_{K_{j_k},f}}_f\,d\lambda_{K_{j_k},f}\text{ \ for all $k\in\mathbb
N$}.\]
Letting here $k\to\infty$, and applying (\ref{lll0}), the continuity of $f$ on $\overline{A}$, and the lower semicontinuity of the mapping $(x,\mu)\mapsto U^\mu(x)$ on
$\mathbb R^n\times\mathfrak M^+$, $\mathfrak M^+$ being equipped with the vague topology \cite[Lemma~2.2.1(b)]{F1}, we get the remaining inequality (\ref{eq-extr2}).
\end{proof}

\subsection{Proof of Theorem~\ref{th3}} We first remark from Theorems~\ref{th-solv2} and \ref{th-unsolv} that it is actually enough to consider the case when (\ref{Larger}) is fulfilled.

For the extremal measure $\xi=\xi_{A,f}$, we infer from (\ref{eq-extr1}) and (\ref{eq-extr2}) that
\[U_f^\xi=C_\xi\text{ \ n.e.\ on $S(\xi)\cap A$,}\]
whence
\begin{equation}\label{Cxieq}
U_f^\xi=C_\xi\text{ \ $\xi$-a.e.,}
\end{equation}
the measure $\xi$ being of the class $\mathcal E^+(A)$.
We claim that then necessarily
\begin{equation}\label{ne0}
 C_\xi\ne0.
\end{equation}
Indeed, if this were not true, then (\ref{eq-extr1}) and (\ref{Cxieq}) would imply, by virtue of Theorem~\ref{th-ps1}, that
$\xi=\hat{\omega}^A$, which however contradicts (\ref{Larger}), for $\xi(\mathbb R^n)\leqslant1$ by (\ref{tm}).

Integrating (\ref{Cxieq}) with respect to $\xi$ we obtain
\[\int U_f^\xi\,d\xi=C_\xi\cdot\xi(\mathbb R^n),\]
whence, by (\ref{Cxi}) and (\ref{ne0}),
\[\xi(\mathbb R^n)=1.\]
Applying Corollary~\ref{cpr-extr} we see that under assumption (\ref{Larger}), $\lambda_{A,f}$ does indeed exist, and moreover $\lambda_{A,f}=\xi$.

The equality $\lambda_{A,f}=\xi$ implies, by use of (\ref{cc}), (\ref{Cxi}), and (\ref{ne0}), that
\begin{equation*}\label{cne0'}c_{A,f}=\int U_f^{\lambda_{A,f}}\,d\lambda_{A,f}=\int U_f^\xi\,d\xi=C_\xi\ne0,\end{equation*}
which proves the third relation in (\ref{c0}). The first is obvious since, by (\ref{Larger}),
\[\lambda_{A,f}(\mathbb R^n)=1<\hat{\omega}^A(\mathbb R^n).\]
Finally, the first relation implies the second, for if not, then $w_f(A)=\hat{w}_f(A)$, whence $\lambda_{A,f}=\hat{\omega}^A$, by the uniqueness of $\hat{\omega}^A$ and the inclusion $\breve{\mathcal E}^+(A)\subset\mathcal E^+_f(A)$, cf.\ (\ref{breve}).

\subsection{Proof of Theorem~\ref{th3'}} According to Theorem~\ref{th3}, under the stated assumptions there exists
the solution $\lambda_{A,f}$ to Problem~\ref{pr-main}, and moreover, by (\ref{c0}),
\begin{equation}\label{cne0}
c_{A,f}\ne0.
\end{equation}
Assume to the contrary that $S(\lambda_{A,f})$ is noncompact. As seen from Remark~\ref{rem1''}, then there exists a sequence
$(x_j)\subset A$ such that $|x_j|\to\infty$ as $j\to\infty$, and
\[U_f^{\lambda_{A,f}}(x_j)=c_{A,f}\text{ \ for all $j\in\mathbb N$}.\]
On account of (\ref{too}), this yields
\begin{equation*}
 \liminf_{j\to\infty}\,U^{\lambda_{A,f}}(x_j)=c_{A,f},
\end{equation*}
whence $c_{A,f}\geqslant0$, which in view of (\ref{cne0}) shows that, actually,
\begin{equation}\label{li}
c_{A,f}>0.
\end{equation}
But, by (\ref{1}), $U_f^{\lambda_{A,f}}\geqslant c_{A,f}$ n.e.\ on $A$, which together with (\ref{too}) and (\ref{li}) gives
\[\liminf_{|x|\to\infty, \ x\in A}\,U^{\lambda_{A,f}}(x)>0.\]
However, this is impossible in consequence of \cite[Remark~4.12(i)]{KM}, the set $A$ not being inner $\alpha$-thin at infinity.


\end{document}